\documentclass[leqno,11pt]{amsart}
\usepackage{amssymb, amsmath}
\usepackage{amsthm, amsfonts,mathrsfs}
\usepackage{color}
\def\inte#1{
\displaystyle\mathop{#1\kern0pt}^\circ }

\let\pa=\partial

\let\d=\delta

\let\r=\rho

\let\f=\frac

\let\p=\psi
\let\om=\omega

\let\Om=\Omega

\def\pa{\partial}
\def\grad{\nabla}

\def\virgp{\raise 2pt\hbox{,}}
\def\cdotpv{\raise 2pt\hbox{;}}

\def\eqdefa{\buildrel\hbox{\footnotesize def}\over =}

\def\C{\mathop{\mathbb C\kern 0pt}\nolimits}
\def\DD{\mathop{\mathbb D\kern 0pt}\nolimits}
\def\EE{\mathop{{\mathbb E \kern 0pt}}\nolimits}
\def\K{\mathop{\mathbb K\kern 0pt}\nolimits}
\def\N{\mathop{\mathbb N\kern 0pt}\nolimits}
\def\Q{\mathop{\mathbb Q\kern 0pt}\nolimits}
\def\R{\mathop{\mathbb R\kern 0pt}\nolimits}
\def\SS{\mathop{\mathbb S\kern 0pt}\nolimits}
\def\ZZ{\mathop{\mathbb Z\kern 0pt}\nolimits}
\def\TT{\mathop{\mathbb T\kern 0pt}\nolimits}
\def\P{\mathop{\mathbb P\kern 0pt}\nolimits}

\def\dv{\mbox{div}}

\def\curl{\mathop{\rm curl}\nolimits}

\def\na{\nabla}
\def\p{\partial}

\newcommand{\beq}{\begin{equation}}
\newcommand{\eeq}{\end{equation}}
\newcommand{\ben}{\begin{eqnarray}}
\newcommand{\een}{\end{eqnarray}}
\newcommand{\beno}{\begin{eqnarray*}}
\newcommand{\eeno}{\end{eqnarray*}}
\newcommand{\andf}{\quad\hbox{and}\quad}
\newcommand{\with}{\quad\hbox{with}\quad}

\newtheorem{thm}{Theorem}[section]
\newtheorem{lem}{Lemma}[section]
\newtheorem{rmk}{Remark}[section]

\newtheorem{prop}{Proposition}[section]
\renewcommand{\theequation}{\thesection.\arabic{equation}}
\begin{document}

\title[GWP of $2-$D inhomogeneous NS]
{Global well-posedness of 2-D incompressible anisitropic Navier-Stokes equations with variable density}

\author[H.  Abidi]{Hammadi Abidi}
\address[H.  Abidi]{D\'epartement de Math\'ematiques
Facult\'e des Sciences de Tunis
Universit\'e de Tunis EI Manar
2092
Tunis
Tunisia}\email{hammadi.abidi@fst.utm.tn}
\author[G. Gui]{Guilong Gui}
\address[G. Gui]{School of Mathematics and Computational Science, Xiangtan University,  Xiangtan 411105,  China}\email{glgui@amss.ac.cn}
\author[P. Zhang]{Ping Zhang}
\address[P. Zhang]{State Key Laboratory of Mathematical Sciences, Academy of Mathematics $\&$ Systems Science, The Chinese Academy of
	Sciences, Beijing 100190, China, and School of Mathematical Sciences,
	University of Chinese Academy of Sciences, Beijing 100049, China.} \email{zp@amss.ac.cn}

\maketitle
\begin{abstract}

We establish the global well-posedness for two-dimensional inhomogeneous, incompressible, anisotropic Navier-Stokes systems. Two specific models are analyzed: one with partial dissipation (referred to as (AINS)) and one with only horizontal dissipation (referred to as (HINS)), under the assumption that the initial density is bounded away from zero and infinity. For the (AINS) system posed in the whole plane $\mathbb{R}^2$, we prove the existence and uniqueness of global solutions for finite-energy initial data, employing time-weighted energy estimates and a duality argument. For the (HINS) system on the domain $\mathbb{T} \times \mathbb{R}$, global well-posedness is established for sufficiently small initial velocity and sufficiently small density variation. By exploiting the anisotropic dissipation structure, employing Poincar\'{e}-type inequalities to obtain exponential decay for the oscillatory part of the velocity field, and carefully balancing the growth of the density gradient, we overcome the principal analytical challenges.
\date{}

\end{abstract}

\noindent {\sl Keywords:}   Anisotropic,  Inhomogeneous incompressible Navier-Stokes equations, Well-posedness.

\vskip 0.2cm

\noindent {\sl MSC2020:} 35Q30, 76D03, 76D09  \\

\renewcommand{\theequation}{\thesection.\arabic{equation}}
\setcounter{equation}{0}
\section{Introduction}

Describing the motion of a mixture of immiscible, incompressible viscous fluids with distinct densities requires modeling the coupled evolution of the density and velocity fields. This dynamics is governed by the following $d$-dimensional inhomogeneous incompressible Navier-Stokes equations \cite{LP1996} with $d = 2$ or $3$:
\begin{equation}\label{eqns-lions-ins-1}
\mbox{(INS)}\quad
\begin{cases}
\partial_t \rho + \nabla \cdot (\rho u) = 0, & (t,x) \in \mathbb{R}^+ \times \mathbb{R}^d, \\[4pt]
\rho\bigl(\partial_t u + u \cdot \nabla u\bigr) - \nabla \cdot \bigl(2\mu(\rho)\,\mathbb{D}(u)\bigr) + \nabla \Pi = 0, \\[4pt]
\nabla \cdot u = 0, \\[4pt]
(\rho, u)|_{t=0} = (\rho_0, u_0),
\end{cases}
\end{equation}
where $\rho$ and $u$ denote the density and velocity of the fluid, respectively; $\Pi$ stands for the scalar pressure; $\mathbb{D}(u) = (d_{ij})_{d \times d}$, with $d_{ij} \stackrel{\text{def}}{=} \frac{1}{2}(\partial_i u^j + \partial_j u^i)$, designates the deformation tensor; and $\mu(\rho) > 0$ is the kinematic viscosity of the fluid.

In particular, when the viscosity $\mu(\rho)$ is a positive constant $\mu$, the viscous term in \eqref{eqns-lions-ins-1} reduces to the Laplacian $\mu\Delta u$. Alternatively, if one considers an anisotropic viscous term of the form $\nabla\cdot (\mathbb{C} :\mathbb{D}(u))$ with a fourth-rank viscosity tensor $\mathbb{C}$, a different class of anisotropic systems arises.

The purpose of this paper is to investigate the global well-posedness of the following two-dimensional inhomogeneous incompressible anisotropic Navier-Stokes system:

\begin{itemize}
\item The 2D inhomogeneous incompressible Navier–Stokes equations with partial dissipation
\begin{equation}\label{eqns-ains-1}
{\mbox{(AINS)}}\quad \begin{cases}
&\partial_t\rho+u\cdot\nabla\rho=0, \qquad\qquad\qquad\qquad\quad\ \  (t, x) \in \mathbb{R}^+\times  \mathbb{R}^2,\\
&\rho(\partial_t u+ u\cdot\nabla u)-\left(\begin{array}{cc}\partial_2^2u^1 \\ \partial^2_1u^2\end{array}\right)+\nabla\Pi =0,\\
&{\mathop{\rm div}}\,u=0, \\
&(\rho,u)|_{t=0}=(\rho_0,u_0).
	\end{cases}
\end{equation}

\item
The 2D inhomogeneous incompressible Navier--Stokes equations with only horizontal dissipation (HINS):
\begin{equation}\label{eqns-HINS-1}
{\mbox{(HINS)}}\quad\begin{cases}
\partial_t\rho + u\cdot\nabla \rho = 0, & (t, x_1, x_2) \in \mathbb{R}^+ \times \mathbb{T} \times \mathbb{R},\\[4pt]
\rho\left(\partial_t u + u\cdot\nabla u\right) - \partial_1^2 u + \nabla\Pi = 0,\\[4pt]
\nabla \cdot u = 0,\\[4pt]
(\rho, u)|_{t=0} = (\rho_0, u_0),
\end{cases}
\end{equation}
where $\mathbb{T} $ denotes the one-dimensional torus $\R/\mathbb{Z}.$

\end{itemize}

 Systems of the above  types appear in geophysical fluid dynamics, where
meteorologists often model turbulent diffusion by separating
the horizontal  viscosity from the vertical one. One may check the references \cite{CDGG, Pedlovsky}  for more background of these systems and \cite{ZZ1} and the references therein  for the recent progress on 3-D homogeneous incompressible anisotropic Navier-Stokes system.

We observe that analogous to (INS), the systems \eqref{eqns-ains-1} and \eqref{eqns-HINS-1} possess the following scaling-invariant property: if $(\rho, u, \Pi)$ solves \eqref{eqns-ains-1} (or \eqref{eqns-HINS-1}) with initial data $(\rho_0, u_0)$, then for any $\ell > 0$,
\begin{equation}\label{S1eq1}
(\rho, u, \Pi)_{\ell}(t, x) \stackrel{\text{def}}{=} \bigl(\rho(\ell^2 t, \ell x), \ell u(\ell^2 t, \ell x), \ell^2 \Pi(\ell^2 t, \ell x)\bigr)
\end{equation}
is also a solution of \eqref{eqns-ains-1} (or \eqref{eqns-HINS-1}) with initial data $(\rho_0(\ell x), \ell u_0(\ell x))$. A functional space is called critical if its norm is invariant under the scaling transformation \eqref{S1eq1}.

For the case of variable viscosity, Lions \cite{LP1996} established the global existence of finite-energy weak solutions. However, as noted in the same work, the uniqueness and regularity of such weak solutions remain major open problems, even in two space dimensions. In the two-dimensional setting, under the additional assumptions that $\|\mu(\rho_0) - 1\|_{L^\infty(\mathbb{T}^2)} \leq \varepsilon$ and $u_0 \in H^1(\mathbb{T}^2)$ for sufficiently small $\varepsilon > 0$, Desjardins \cite{desjardins} proved that the global weak solution $(\rho, u, \nabla\Pi)$ constructed in \cite{LP1996} satisfies $u \in L^{\infty}([0, T]; H^1(\mathbb{T}^2))$ for any $T > 0$. The first and third authors of this paper \cite{A-Z} improved the regularity of the solutions in \cite{desjardins} and established uniqueness under an additional regularity assumption on the initial density.

For the system (INS) with constant viscosity, Lady\v{z}enskaja and Solonnikov \cite{LS} first studied  the well-posedness of the equations in a bounded domain $\Omega$ under homogeneous Dirichlet boundary conditions for the velocity $u$. Using energy methods, Kazhikov \cite{KA1974} later established the global existence of weak solutions in the energy space provided that the initial density is bounded from above and away from vacuum. The uniqueness issue in the two-dimensional settings considered therein was addressed in \cite{PZZ13, HSWZ2024} for the models under investigation. Simon \cite{Simon} subsequently removed the strict positivity requirement on $\rho_0$.

Concerning the critical well-posedness in two dimensions, there exists extensive literature \cite{danchin04, A-Z, BHas, AG2021, DW2023, HSWZ2024} contributing to this field. In particular, the first two authors of this paper \cite{AG2021} improved upon previous results from \cite{danchin04, BHas} by establishing global well-posedness under the conditions $u_0 \in \dot{B}^{0}_{2,1}$ and $\rho_0^{-1} - 1 \in \dot{B}^{\varepsilon}_{\frac{2}{\varepsilon}, 1}$ for some $0 < \varepsilon < 1$, with $\rho_0$ bounded between two positive constants $M_1 \leq \rho_0 \leq M_2$. Inspired by a three-dimensional result in \cite{Zhang2020}, Danchin and Wang \cite{DW2023} proved the existence and uniqueness of solutions when the initial density $\rho_0$ is close to a positive constant in $L^\infty$, and the initial velocity satisfies $u_0 \in L^2 \cap \dot{B}^{-1 + \frac{2}{p}}_{p,1}$ for $1 < p < 2$. Most recently, Hao et al. \cite{HSWZ2024} established the uniqueness of global weak solutions for the two-dimensional (INS) system with constant viscosity, whose existence had been obtained earlier by Lions \cite{LP1996}.

On the other hand,
when $\mu(\rho)\equiv 0$ and $d=2$ in (INS), one obtains the following  2-D inhomogeneous incompressible Euler equations (IE):
\begin{equation}\label{eqns-lions-IE-1}\mbox{(IE)}\quad
\begin{cases}
\pa_t \rho + u \cdot \nabla \rho=0,& (t,x)\in\R^+\times\R^2, \\
\rho(\pa_t u +u\cdot\nabla u )+\grad\Pi=0, \\
\dv\, u = 0, \\
(\rho,u)|_{t=0}=(\rho_0,u_0),
\end{cases}
\end{equation}
Danchina and Fanelli \cite{danchin-Fanelli-2011} established the local well-posedness in endpoint Besov spaces and a continuation Beale-Kato-Majda
criterion, and gave lower bounds for the lifespan of a solution to \eqref{eqns-lions-IE-1}. In contrast to the 2-D homogeneous Euler equations, the global well-posedness of the inhomogeneous Euler equations (IE) remains open and highly challenging except the special case in \cite{CWZZ2025}.

The inhomogeneous incompressible anisotropic Navier-Stokes systems \eqref{eqns-ains-1} and \eqref{eqns-HINS-1} can be viewed as a mathematical hybrid, bridging the 2-D inhomogeneous Euler equations (IE) and the 2-D inhomogeneous Navier-Stokes equations (INS) with full (constant) viscosity.

When $\rho$ is a positive constant, say $1$, the system \eqref{eqns-ains-1} (respectively \eqref{eqns-HINS-1}) reduces to the following 2-D homogeneous incompressible Euler-type equations:
\begin{equation}\label{eqns-ans-1}
\mbox{(ANS)}\quad
\begin{cases}
\partial_t u + u \cdot \nabla u - \begin{pmatrix} \partial_2^2 u^1 \\ \partial_1^2 u^2 \end{pmatrix} + \nabla \Pi = 0, &\  (t, x) \in \mathbb{R}^+ \times \mathbb{R}^2,\\[6pt]
\nabla \cdot u = 0, \\[4pt]
u|_{t=0} = u_0,
\end{cases}
\end{equation}
or
\begin{equation}\label{eqns-HNS-1}
\mbox{(HNS)}\quad
\begin{cases}
\partial_t u + u \cdot \nabla u - \partial_1^2 u + \nabla \Pi = 0, &\, (t, x_1, x_2) \in \mathbb{R}^+ \times \mathbb{T} \times \mathbb{R},\\[6pt]
\nabla \cdot u = 0, \\[4pt]
u|_{t=0} = u_0,
\end{cases}
\end{equation}
respectively.

Roughly speaking, based on the Yudovich approach to the 2D Euler equations \cite{Yudovich1963}, the unique global existence of solutions to \eqref{eqns-ans-1} (or \eqref{eqns-HNS-1}) can be established for initial velocity $u_0 \in H^{s}(\Omega)$ with $s > 2$. This result relies crucially on the uniform-in-time boundedness of the vorticity $\omega \eqdefa  \partial_1 u^2 - \partial_2 u^1$. Although the solution remains in $H^{s}(\Omega)$ for all time, its $H^{s}$-norm may grow relatively rapidly over time \cite{DongB2021}.

When considering the global well-posedness of \eqref{eqns-ans-1} (or \eqref{eqns-HNS-1}) in the energy space (or $H^2$), the answer is not straightforward. Based on the observation \eqref{momentum-ains-2} below, we find that the system \eqref{eqns-ans-1} is not genuinely anisotropic and can be solved using the same approach as for the classical 2-D Navier--Stokes equations. For the system \eqref{eqns-HNS-1}, Dong et al. \cite{DongB2021} investigated this problem when the domain $\Omega$ is $\mathbb{T} \times \mathbb{R}$, and established a uniform upper bound for $u$ in $H^2(\Omega)$ by decomposing the velocity $u$ into its horizontal average $\bar{u}$ and the corresponding oscillation $u_{\neq},$ thereby obtaining the global well-posedness of the system. Moreover, they showed that if $\|u_0\|_{H^2}$ is sufficiently small, then $u_{\neq}$ decays exponentially in time in the $H^1$ norm. It should be noted that, in the system \eqref{eqns-HNS-1}, even under the smallness assumption of $\|u_0\|_{H^2}$, the quantity $\int_0^t\|\nabla{u}(\tau)\|_{L^{\infty}}\,d\tau$ may still exhibit algebraic growth in time $t>0$.
When considering the inhomogeneous fluid system \eqref{eqns-HINS-1} (or \eqref{eqns-ains-1}), the propagation of regularity and the control of density variation present substantial mathematical challenges.

The main purpose of this paper is to establish the existence and uniqueness of global solutions to the inhomogeneous anisotropic Navier-Stokes systems \eqref{eqns-ains-1} and \eqref{eqns-HINS-1}.
Throughout the paper, we assume that the initial density satisfies
\begin{equation}\label{t.1}
    0 < M_1 \leq \rho_0(x) \leq M_2 \quad \text{for all } x \in \Omega
\end{equation}
with $\Omega = \mathbb{R}^2$ or $\Omega = \mathbb{T} \times \mathbb{R}$ for some positive constants $M_1$, $M_2$.

\medskip
\noindent\textbf{Main results.}

Our first main result establishes the existence of a unique global solution to \eqref{eqns-ains-1} with initial velocity in the energy space $L^2(\mathbb{R}^2)$.

\begin{thm}\label{thm-GUS-NS}
{\sl Let $\rho_0$ satisfy \eqref{t.1} and let $u_0 \in L^2(\mathbb{R}^2)$ be a solenoidal vector field. Then the system \eqref{eqns-ains-1} admits a unique global solution $(\rho, u, \nabla \Pi)$ such that
\[
\rho \in C_{\mathrm{w}}([0, \infty[; L^\infty), \quad
u \in C([0, \infty[; L^2) \cap L^2(\mathbb{R}^+; \dot{H}^1),
\]
and
\begin{equation}\label{bdd-density-visc-1}
0 < M_1 \leq \rho(t, x) \leq M_2, \quad
\forall\, (t, x) \in \mathbb{R}^+ \times \mathbb{R}^2.
\end{equation}
Moreover, the following estimates hold:
\begin{equation}\label{est-variable-2}
\begin{split}
&\|u\|_{L^\infty(\mathbb{R}^+; L^2)}^2 + \|\nabla u\|_{L^2(\mathbb{R}^+; L^2)}^2 + \bigl\|t^{\frac{1}{2}} \nabla u\bigr\|_{L^\infty(\mathbb{R}^+; L^2)}^2 + \bigl\|t^{\frac{1}{2}} (\nabla^2 u, \nabla \Pi, D_t u)\bigr\|_{L^2(\mathbb{R}^+; L^2)}^2 \\
&\quad + \bigl\|t (\nabla^2 u, \nabla \Pi, D_t u)\bigr\|_{L^\infty(\mathbb{R}^+; L^2)}^2 + \bigl\|t \nabla D_t u\bigr\|_{L^2(\mathbb{R}^+; L^2)}^2
+ \bigl\|t^{\frac{1}{2}} \nabla u\bigr\|_{L^2(\mathbb{R}^+; L^\infty)}^2\\
&\lesssim \|u_0\|_{L^2}^2.
\end{split}
\end{equation}
Here and below, we always denote $D_t\eqdefa \p_t+u\cdot\na$ to be the material derivative.}
\end{thm}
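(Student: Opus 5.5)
The plan is to mimic the classical two-dimensional theory for (INS) with constant viscosity and bounded-away-from-vacuum density. This is possible because of one structural observation: on divergence-free fields the operator $u\mapsto(\partial_2^2u^1,\partial_1^2u^2)^T$, modulo gradients, is as coercive as the Laplacian.

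\emph{Coercivity and Stokes estimate.} First, testing against $u$ and integrating by parts twice with $\partial_1u^1=-\partial_2u^2$ gives
\[
\int\partial_1u^2\,\partial_2u^1\,dx=\int\partial_1u^1\,\partial_2u^2\,dx=-\|\partial_1u^1\|_{L^2}^2 .
\]
Hence $\|\nabla u\|_{L^2}^2\le 2\bigl(\|\partial_2u^1\|_{L^2}^2+\|\partial_1u^2\|_{L^2}^2\bigr)$. Second, on the Fourier side, for $\hat u=a\,\xi^\perp/|\xi|$ the symbol of the operator gives the quadratic form $|a|^2(\xi_1^4+\xi_2^4)/|\xi|^2\ge\frac12|\xi|^2|a|^2$. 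From this I will derive the anisotropic Stokes estimate: if $(\partial_2^2u^1,\partial_1^2u^2)^T-\nabla\Pi=f$ with $\dv u=0$, then
\[
\|\nabla^2u\|_{L^p}+\|\nabla\Pi\|_{L^p}\lesssim\|f\|_{L^p}\quad(1<p<\infty),
\]
via Mikhlin multipliers. In our application $f=\rho D_tu$.

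\emph{A priori estimates.} I work with smooth data, then regularize and pass to the limit. The steps are as follows.
\begin{itemize}
\item[(i)] The transport equation gives \eqref{bdd-density-visc-1}. The energy identity plus the coercivity above gives $\|u\|_{L^\infty L^2}^2+\|\nabla u\|_{L^2L^2}^2\lesssim\|u_0\|_{L^2}^2$.
\item[(ii)] Test the momentum equation by $tD_tu$. This yields
\[
\tfrac12\tfrac{d}{dt}\Bigl(t\|(\partial_2u^1,\partial_1u^2)\|_{L^2}^2\Bigr)+t\|\sqrt\rho D_tu\|_{L^2}^2
\le\tfrac12\|\nabla u\|_{L^2}^2+Ct\int|\nabla u|^3\,dx .
\]
Using Gagliardo--Nirenberg, $\|\nabla u\|_{L^3}^3\lesssim\|\nabla u\|_{L^2}^2\|\nabla^2u\|_{L^2}$. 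The Stokes estimate bounds $\|\nabla^2 u\|_{L^2}\lesssim\|D_tu\|_{L^2}$, so the cubic term is absorbed up to $Ct\|\nabla u\|_{L^2}^4$. Gronwall with $\int\|\nabla u\|_{L^2}^2\,dt\lesssim\|u_0\|_{L^2}^2$ then gives the $t^{1/2}$ bounds. The Gronwall factor is $\exp(C\|u_0\|_{L^2}^4)$, which is harmless since only a constant is needed; alternatively one can exploit the scaling \eqref{S1eq1}.
\item[(iii)] Apply $D_t$ to the momentum equation, test with $t^2D_tu$, and commute $D_t$ with the anisotropic dissipation and with $\nabla$. The commutator terms are of the form $\int\nabla u\,\nabla u\,\nabla D_tu$ and $\int\nabla u\,\nabla\Pi\,D_tu$ (after writing $\nabla\Pi$ through the Stokes estimate). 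These are bounded by $\|\nabla u\|_{L^4}^2\|\nabla D_tu\|_{L^2}$ and are absorbed using the previous bounds. This gives the $t$-weighted bounds on $D_tu$, $\nabla^2u$, $\nabla\Pi$, and $t\nabla D_tu\in L^2L^2$.
\item[(iv)] For $t^{1/2}\nabla u\in L^2L^\infty$, I use
\[
\|\nabla u\|_{L^\infty}\lesssim\|\nabla u\|_{L^2}^{1-\theta}\|\nabla^2u\|_{L^4}^{\theta},\qquad
\|\nabla^2u\|_{L^4}\lesssim\|D_tu\|_{L^4}\lesssim\|D_tu\|_{L^2}^{1/2}\|\nabla D_tu\|_{L^2}^{1/2}.
\]
I then distribute the time weights among the quantities bounded in (ii)--(iii) and apply Hölder in time.
\end{itemize}

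\emph{Existence and uniqueness.} Existence follows by mollifying $u_0$ and $\rho_0$ and invoking the local theory. The uniform bounds above make the solutions global and allow a compactness passage: Aubin--Lions for $u$, and DiPerna--Lions renormalization for $\rho$, which gives weak-$*$ continuity in $L^\infty$. Uniqueness is the main obstacle. Since $\rho_0$ is merely bounded, Lipschitz-in-time density estimates are unavailable, and $\int_0^1\|\nabla u\|_{L^\infty}\,dt$ is only borderline: the bound $t^{1/2}\nabla u\in L^2L^\infty$ just fails to be integrable near $0$. I would therefore use the duality method, in the spirit of Hoff and of \cite{HSWZ2024}. Let $\delta\rho=\rho_1-\rho_2$. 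I estimate $\delta\rho$ in $\dot H^{-1}$ (or $\dot W^{-1,p}$) by solving the backward transport equation $\partial_t\phi+u_2\cdot\nabla\phi=g$, whose solution stays bounded in $H^1$ thanks to $\int_0^T\|\nabla u_2\|_{L^\infty}\,dt$ near time $T$ but only weighted integrability near $0$. I then couple this with an $L^2$ estimate of $\delta u$, where the forcing $\delta\rho\,D_tu_2$ is controlled by $\|\delta\rho\|_{\dot H^{-1}}\|\nabla(t D_tu_2)\|_{L^2}$ using the weighted bounds from \eqref{est-variable-2}. The difficulty is closing an Osgood-type inequality despite the $t^{-1}$ singularity. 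I would first try working on a short interval $[0,T_0]$ with the weight $t$ absorbed into $\|\delta u\|_{L^2}^2/t$-type quantities, using that $\delta u(0)=0$ and Hardy's inequality in time. After that, I would propagate uniqueness on $[T_0,\infty[$, where $\nabla u\in L^1L^\infty$ and the standard Lagrangian argument applies.
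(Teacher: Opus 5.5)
Your a priori part is largely sound and differs mildly from the paper. You get the $t^{1/2}$-weighted bounds by testing with $tD_tu$ and a Mikhlin $L^p$ estimate for the anisotropic Stokes system. The paper instead works with the vorticity equation \eqref{momentum-ains-2}, whose dissipation $(\partial_1^4+\partial_2^4)\Delta^{-1}$ is elliptic, and tests with $u_t$; both routes work. In (ii) you dropped the pressure term $\int\nabla\Pi\cdot(u\cdot\nabla u)\,dx$, which does not vanish. It is harmless: bound it by $\|\nabla\Pi\|_{L^2}\|u\|_{L^4}\|\nabla u\|_{L^4}$ and use your Stokes estimate.

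\textbf{Gap in step (iii).} You omit the main pressure term. Since $D_t\nabla\Pi=\nabla D_t\Pi-\nabla u\cdot\nabla\Pi$, only the second piece is a commutator. The first piece gives
\[
\int_{\R^2}\nabla D_t\Pi\cdot D_tu\,dx=-\int_{\R^2}D_t\Pi\,\dv D_tu\,dx,\qquad \dv D_tu=\partial_iu^j\partial_ju^i\neq0 .
\]
Nothing in your scheme controls $\Pi_t$ or $D_t\Pi$. The Stokes estimate yields only $\nabla\Pi$, and estimating $\nabla D_t\Pi$ from the differentiated equation would require $D_t^2u$. This is the heart of the $D_tu$ estimate.

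The paper handles it in four moves:
\begin{itemize}
\item It rewrites the term as $\frac{d}{dt}\int\Pi\,\partial_iu\cdot\nabla u^i\,dx-3\int\Pi\,\partial_ju^i\partial_iD_tu^j\,dx$. The cubic term $\mathrm{tr}(\nabla u)^3$ vanishes for traceless $2\times2$ matrices.
\item It bounds the remainder by $\|\Pi\|_{BMO}\|\partial_ju^i\partial_iD_tu^j\|_{\mathscr{H}^1}\lesssim\|\nabla\Pi\|_{L^2}\|\nabla u\|_{L^2}\|\nabla D_tu\|_{L^2}$, using the div--curl lemma.
\item It runs Gronwall on the corrected energy $\|\sqrt\rho D_tu\|_{L^2}^2-2\int\Pi\,\partial_iu\cdot\nabla u^i\,dx+2C_2\|\omega\|_{L^2}^4$.
\item The equivalence of that energy with $\|\sqrt\rho D_tu\|_{L^2}^2+\|\omega\|_{L^2}^4$ uses the same Hardy-space bound.
\end{itemize}
Also, your coercivity inequality holds only for solenoidal fields. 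For $D_tu$ it gives only $\|\nabla D_tu\|_{L^2}\lesssim\|(\partial_2D_tu^1,\partial_1D_tu^2)\|_{L^2}+\|\nabla u\|_{L^4}^2$.

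\textbf{Gap in uniqueness.} Uniqueness is not proved: you name the obstacle and list things to try. The $\dot H^{-1}$ version cannot work with the available regularity. Pairing $\delta\rho$ against $D_tu_2\cdot\delta u$ in $\dot H^{-1}\times\dot H^1$ needs $\nabla(D_tu_2\cdot\delta u)\in L^2$, so $\delta u$ or $D_tu_2$ would have to be bounded.

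The missing quantitative idea is that
\[
\int_s^t\|\nabla\bar u\|_{L^\infty}\,d\tau\le\|\tau^{\frac12}\nabla\bar u\|_{L^2(L^\infty)}\sqrt{\ln(t/s)} .
\]
Hence the backward solution $\psi$ of \eqref{est-I1-1}, with final datum $\bar D_t\bar u\cdot\delta u(t)$, satisfies $\|\nabla\psi(s)\|_{L^{4/3}}\le C\|\nabla\psi(t)\|_{L^{4/3}}e^{C\sqrt{\ln(t/s)}}$ (Lemma \ref{lem-est-transport-1}). This growth is slower than any power of $t/s$. Duality then gives
\[
|II_1(t)|\le C\|t\nabla(\bar D_t\bar u\cdot\delta u)(t)\|_{L^{4/3}}F(t),\qquad F(t)=t^{-1}\int_0^te^{C\sqrt{\ln(t/s)}}\|\delta u(s)\|_{L^4}\,ds .
\]
The Hardy-type Lemma \ref{lem-est-transport-2} bounds $F$ in $L^4_T$ by $\|\delta u\|_{L^4_T(L^4)}$. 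With the $L^{4/3}$--$L^4$ pairing, $t\nabla\bar D_t\bar u\in L^2_T(L^2)$ and $t\bar D_t\bar u\in L^4_T(L^4)$ yield $\|II_1\|_{L^1_T}\le L(T){\rm E}(T)$ with $L(T)\to0$. The weight $t$ exactly absorbs the $1/t$, and the inequality closes linearly on a short interval. No Osgood argument or $\|\delta u\|_{L^2}^2/t$ functional is needed, and a bootstrap finishes.
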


The second main result concerns the global well-posedness of \eqref{eqns-HINS-1}. Toward this, for a function $f = f(x_1, x_2)$ integrable in $x_1$ on $\mathbb{T}$,  we define the horizontal average by
\begin{equation}\label{average-u-1-a}
\bar{f}(x_2) \eqdefa \int_{\mathbb{T}} f(x_1, x_2) \,dx_1,
\end{equation}
and the corresponding oscillation part by
\begin{equation}\label{average-u-1-b}
f_{\neq} = \mathcal{P}_{\neq} f \eqdefa f - \bar{f}.
\end{equation}

\begin{thm}\label{thm-GWP-HINS}
{\sl Let $\Omega = \mathbb{T} \times \mathbb{R}$, $\rho_0$ satisfy \eqref{t.1} with $\nabla\rho_0 \in L^{\infty} \cap \dot{H}^1(\Omega)$, and let $u_0 \in H^2(\Omega)$ be a solenoidal vector field. Then there exists a sufficiently small positive constant $\mathfrak{c}$ such that if
\begin{equation}\label{assumption-initial-103}
\|\rho_{0,\neq}\|_{L^{\infty}} + \|u_0\|_{H^1}\bigl(1 + \|u_0\|_{H^2}^3 + \|\nabla\rho_0\|_{L^{\infty}\cap\dot{H}^1}^3(1 + \|(\partial_1 {u}_{0,\neq},\, \partial_1 {\omega}_{0,\neq})\|_{L^2}^6)\bigr) \leq \mathfrak{c},
\end{equation}
the system \eqref{eqns-HINS-1} admits a unique global solution $(\rho, u, \nabla \Pi)$ satisfying
\[
\rho \in C([0,\infty[; L^\infty \cap \dot{W}^{1, \infty} \cap \dot{H}^2), \quad
u \in C([0,\infty[; H^2) \cap L^2(\mathbb{R}^+; \dot{H}^1),
\]
and \eqref{bdd-density-visc-1} holds for $(t,x)\in\R^+\times\Om.$
Moreover,  there exist positive constants $c_1$ and $c_2$ so that there hold
\begin{equation}\label{expo-Hins-u-neq-1}
\begin{split}
\sup_{t \in \mathbb{R}^+} \bigl(e^{7c_1 t} \|\partial_1 u_{\neq}(t)\|_{L^2}^2\bigr) + \int_0^{+\infty} e^{6c_1 t} \|(\partial_1 u_{\neq}, \, \partial_1^2 u_{\neq},\, \partial_t u_{\neq},\, &\nabla\Pi_{\neq})(t)\|_{L^2}^2 \, dt\\
&\quad \lesssim \|\partial_1 u_{0\neq}\|_{L^2}^2,
\end{split}
\end{equation}
\begin{equation}\label{expo-Hins-u-neq-2}
\begin{split}
&\sup_{t \in \mathbb{R}^+} \bigl(e^{3c_{2} t} \|\partial_1 \omega_{\neq}(t)\|_{L^2}^2\bigr) + \int_0^{+\infty} e^{2c_{2} t} \|(\partial_1 \omega_{\neq},\,\partial_t \omega_{\neq},\,\partial_1^2 \omega_{\neq})(t)\|_{L^2}^2 \, dt \\
& \lesssim \|(\partial_1 u_{0\neq},\,\partial_1 \omega_{0\neq})\|_{L^2}^2 + \|u_{0}\|_{H^1}\Bigl(1 + \|u_0\|_{H^2}^2 + \|\nabla\rho_0\|_{L^{\infty}\cap\dot{H}^1}^2 \\
&\qquad \qquad \qquad \qquad \qquad \qquad \qquad + \|\nabla\rho_0\|_{L^{\infty}\cap\dot{H}^1}^2 \|(\partial_1 {u}_{0\neq},\, \partial_1 {\omega}_{0\neq})\|_{L^2}^2\Bigr),
\end{split}
\end{equation}
\begin{equation}\label{grow-Hins-u-neq-3}
\begin{split}
\|\nabla\rho\|_{L^\infty_t(L^{\infty} \cap \dot{H}^1)} \leq 2\|\nabla\rho_0\|_{L^{\infty} \cap \dot{H}^1} \, e^{\frac{c_1}{2}  t} \quad \forall\, t > 0,
\end{split}
\end{equation}
and
\begin{equation}\label{bdd-Hins-u-neq-4}
\begin{split}
& \|(u,\,\omega,\,\partial_1 u,\,\nabla\omega) \|_{L^\infty(\R^+; L^2)}^2 + \|(\partial_1 u,\,\partial_1 \omega,\,\partial_1^2 u,\,\partial_t u,\,\partial_t \omega,\,\partial_1 \nabla\omega)\|_{L^2(\mathbb{R}^+;L^2)}^2 \\
&\lesssim \|u_0\|_{H^2}^2 + \|u_{0}\|_{H^1}^2 \|\nabla\rho_0\|_{L^{\infty}\cap\dot{H}^1}^6 \\
&\qquad + \|\nabla\rho_0\|_{L^{\infty}\cap\dot{H}^1}^2 \bigl(\|(\partial_1 {u}_{0\neq},\, \partial_1 {\omega}_{0\neq})\|_{L^2}^2 + \|(\partial_1 {u}_{0\neq},\, \partial_1 {\omega}_{0\neq})\|_{L^2}^4\bigr).
\end{split}
\end{equation}}
\end{thm}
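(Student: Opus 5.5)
The proof will be a bootstrap (continuity) argument built on a priori estimates for smooth solutions. Local well-posedness in $H^2$ for $u$ with $\nabla\rho\in L^\infty\cap\dot H^1$ follows from standard transport/parabolic–hyperbolic energy methods. Uniqueness follows from a Lagrangian or energy difference estimate in $L^2$, using $\int_0^t\|\nabla u\|_{L^\infty}\,d\tau<\infty$ on finite intervals. Global existence therefore reduces to closing the estimates. Let $T^*$ be the maximal time on which the growth bound \eqref{grow-Hins-u-neq-3} holds with constant $2$ replaced by $4$, and on which $\|\rho_\neq\|_{L^\infty}$ stays comparable to its initial size. We show that on $[0,T^*[$ the bound in fact holds with constant $2$; hence $T^*=\infty$.

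\textbf{Step 1: basic energy and averaged estimates.} The transport equation preserves \eqref{bdd-density-visc-1}. Testing the momentum equation by $u$ gives $\|u\|_{L^\infty_t L^2}^2+2\|\partial_1u\|_{L^2_tL^2}^2\le M_2\|u_0\|_{L^2}^2/M_1$. Since $\int_{\mathbb T}\partial_1 f\,dx_1=0$, a Poincar\'e inequality $\|f_\neq\|_{L^2}\le \frac1{2\pi}\|\partial_1 f_\neq\|_{L^2}$ holds on $\mathbb T\times\R$. Divergence-free gives $\partial_2\bar u^2=0$, hence $\bar u^2\equiv0$. Consequently $u^2=u^2_\neq$, and $u\cdot\nabla$ always involves at least one oscillating factor.

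\textbf{Step 2: exponential decay of $u_\neq$, giving \eqref{expo-Hins-u-neq-1}.} Test the momentum equation by $\partial_t u$ (equivalently $D_tu$) and by $-\partial_1^2u$. This yields control of $\|\partial_1u\|_{L^2}^2$ together with $\int(\|\partial_tu\|^2+\|\partial_1^2u\|^2)$. The pressure is eliminated by the divergence-free condition, and the density is handled by writing $\rho\partial_t u=\bar\rho\,\partial_tu+\rho_\neq\partial_tu$ with $\|\rho_\neq\|_{L^\infty}$ small. Note that $\partial_1 u=\partial_1u_\neq$. The nonlinear terms $\int \rho u\cdot\nabla u\cdot\partial_tu$ are bounded using anisotropic inequalities such as $\|f_\neq\|_{L^\infty}\lesssim\|f_\neq\|_{L^2}^{1/4}\|\partial_1f_\neq\|_{L^2}^{1/2}\|\partial_2f_\neq\|^{1/4}\cdots$ and the smallness of $\|u_0\|_{H^1}$. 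By Poincar\'e, $\|\partial_1^2u_\neq\|\ge 2\pi\|\partial_1u_\neq\|$, which provides the damping. Multiplying by $e^{7c_1t}$ for small $c_1$ gives \eqref{expo-Hins-u-neq-1}. The $\nabla\Pi_\neq$ bound then comes from the equation.

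\textbf{Step 3: vorticity, giving \eqref{expo-Hins-u-neq-2} and \eqref{bdd-Hins-u-neq-4}.} Apply $\operatorname{curl}$ to get
\[
\rho(\partial_t\omega+u\cdot\nabla\omega)-\partial_1^2\omega=-\nabla^\perp\rho\cdot D_tu .
\]
Do $L^2$ estimates for $\omega$ and $\nabla\omega$, and a $\partial_1$-weighted estimate for $\omega_\neq$ with weight $e^{3c_2t}$, $c_2<c_1$. The source term is bounded by $\|\nabla\rho\|_{L^\infty}\|D_tu\|_{L^2}$. Here $D_t u$ decays like $e^{-3c_1t}$ in $L^2_t$ weighted sense from Step 2, while $\nabla\rho$ grows at most like $e^{c_1t/2}$ by the bootstrap hypothesis. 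Their product is therefore integrable, with room left for the weight $e^{2c_2t}$. The average part $\bar\omega$ has no dissipation in $x_2$, but its equation is forced only by oscillatory products, which decay exponentially. This makes $\|\nabla\omega\|_{L^\infty_tL^2}$ bounded uniformly, with the polynomial dependence on the data as in \eqref{bdd-Hins-u-neq-4}.

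\textbf{Step 4 (main obstacle): density gradient growth, closing \eqref{grow-Hins-u-neq-3}.} From the transport equation,
\[
\|\nabla\rho(t)\|_{L^\infty\cap\dot H^1}\le\|\nabla\rho_0\|\exp\Bigl(C\int_0^t\|\nabla u\|_{L^\infty}\,d\tau\Bigr)\,(1+\cdots),
\]
where the $\dot H^2$ part also needs $\|\nabla^2u\|_{L^2}\|\nabla\rho\|_{L^\infty}$. The difficulty is that $\int_0^t\|\nabla u\|_{L^\infty}$ may grow in $t$, because $\partial_2\bar u^1$ is not dissipated. The key point is that in $u\cdot\nabla\rho$ only $\bar u^1\partial_1\rho$ involves the mean flow. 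This term carries $\partial_1$ and is a pure shear transport in $x_1$, so it does not stretch $\partial_2\rho$ beyond linear growth, $\partial_2\bar u^1\,\partial_1\rho$. All other contributions involve $\nabla u_\neq$. By Gagliardo–Nirenberg, $\|\nabla u_\neq\|_{L^\infty}\lesssim\|\partial_1u_\neq\|^{1/2}\|\partial_1\nabla\omega_\neq\|^{1/2}$-type bounds, and Step 3 makes these exponentially integrable with integral $\lesssim\|u_0\|_{H^1}(\cdots)$. The linear-in-time shear contribution has size at most $t\,\|\partial_2\bar u^1\|_{L^\infty}$. I would then show this is absorbed by $e^{c_1t/2}$ with constant $\le 2$, using the smallness \eqref{assumption-initial-103}: the $L^\infty$ norm of $\partial_2\bar u^1$ is small (it is bounded by $\|u_0\|_{H^1}^{1/2}\|u_0\|_{H^2}^{1/2}$-type quantities), so $1+Ct\epsilon\le 2e^{c_1t/2}$ for $\epsilon$ small relative to $c_1$. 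Smallness of $\|\rho_\neq\|_{L^\infty}$ is preserved similarly, since $\bar u^1\partial_1$ commutes with averaging and the remaining terms are small and decaying. This closes the bootstrap, with the powers in \eqref{assumption-initial-103} arising from tracking Steps 2–3.
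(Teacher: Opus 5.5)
Your architecture matches the paper's. It is a continuity argument: Poincar\'e in $x_1$ gives exponential decay of $u_{\neq}$ and then of $\omega_{\neq}$; the mean parts stay bounded because their equations are forced only by oscillatory products; and $\rho_{\neq}$ is controlled along the mean-flow characteristics. Your handling of $\|\nabla\rho\|_{L^\infty}$ is also fine in substance. Your linear-growth shear argument and the paper's Gr\"onwall bound $e^{C\mathfrak{c}^{1/4}(1+t)}$ rest on the same input: smallness of $\sup_t\|\partial_2\bar u^1\|_{L^\infty}\lesssim\|\omega\|_{L^\infty_tL^2}^{1/2}\|\partial_2\omega\|_{L^\infty_tL^2}^{1/2}$.

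The gap is the $\dot H^1$ component of $\nabla\rho$. You mention it (``the $\dot H^2$ part also needs $\|\nabla^2u\|_{L^2}\|\nabla\rho\|_{L^\infty}$'') but never treat it. It cannot be dropped: the $\partial_2\omega$ estimate in your Step 3 has the source $D_tu\cdot\nabla^\perp\partial_2\rho$. Without $x_2$-dissipation, the factor $\partial_2^2\rho$ cannot be moved onto $\partial_2\omega$. Differentiating the transport equation twice produces the forcing $\partial_2^2\bar u^1\,\partial_1\rho$. Here $\|\partial_2^2\bar u^1\|_{L^2}=\|\partial_2\bar\omega\|_{L^2}$ neither decays nor is small; it can stay as large as $\|u_0\|_{H^2}$ for all time, and \eqref{assumption-initial-103} allows that to be large. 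So the shear contribution to $\|\nabla^2\rho(t)\|_{L^2}$ is of order $t\,\|u_0\|_{H^2}\|\nabla\rho_0\|_{L^\infty}$, not $t\epsilon$. Your inequality $1+Ct\epsilon\le 2e^{c_1t/2}$ therefore does not cover it.

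This is a real obstruction, not a lossy estimate. The field $u=(U(x_2),0)$, $\Pi=0$, $\rho(t,x)=\rho_0(x_1-tU(x_2),x_2)$ is an exact solution, and $\partial_2^2\rho$ contains $-t\,U''(x_2)\,(\partial_1\rho_0)(x_1-tU(x_2),x_2)$. Take $\rho_0=1+\eta\sin(2\pi x_1)\chi(x_2)$ with $\eta$ small and $\chi\equiv1$ near the support of $U$. Take $U$ with $\|U\|_{H^1}$ tiny but $\|U''\|_{L^2}\gg c_1$. These data satisfy \eqref{assumption-initial-103}. Yet $\|\nabla^2\rho(t)\|_{L^2}\gtrsim\eta\,t\,\|U''\|_{L^2}$ exceeds $4\|\nabla\rho_0\|_{L^\infty\cap\dot H^1}e^{c_1t/2}$ at $t\sim c_1^{-1}$, with $c_1$ the rate fixed in your Step 2. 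So your bootstrap cannot be improved from $4$ to $2$.

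What is missing is a separate Duhamel propagation of $\|\nabla^2\rho\|_{L^2}$ with a data-dependent prefactor, e.g. $\bigl(\|\nabla^2\rho_0\|_{L^2}+c_1^{-1}\sup_t\|\nabla\omega\|_{L^2}\|\nabla\rho_0\|_{L^\infty}\bigr)e^{c_1t/2}$. You must then recheck that the $\partial_2\omega$ estimate still closes. There $\nabla^2\rho$ only multiplies exponentially decaying factors, but the resulting smallness conditions need not be those in \eqref{assumption-initial-103}. The paper does not supply this step either. Its bound for $\|\nabla u\|_{L^\infty\cap\dot H^1}^2$ leading to \eqref{est-Lip-u-7} omits $\|\partial_2\bar\omega\|_{L^2}^2$, and it fails for the same shear flow.

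Separately, your bootstrap should require $\|\rho_{\neq}\|_{L^\infty}$ to stay small, not ``comparable to its initial size'', which may be zero. It should also include smallness of $\|(u,\omega)\|_{L^2}$, which Step 2 uses and which is not conserved.
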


\begin{rmk}\label{rmk-hins-thm-1}
By virtue of \eqref{bdd-density-visc-1}, we readily obtain
\begin{equation*}\label{bddunif-rho-1}
\begin{split}
\|\bar{\rho}\|_{L^{\infty}(\mathbb{R}^+; L^{\infty})} \leq \|\rho\|_{L^{\infty}(\mathbb{R}^+; L^{\infty})}, \quad
\|\rho_{\neq}\|_{L^{\infty}(\mathbb{R}^+; L^{\infty})} \leq 2\|\rho\|_{L^{\infty}(\mathbb{R}^+; L^{\infty})}
\end{split}
\end{equation*}
and
\begin{equation*}\label{bddunif-rho-2}
0 < M_1 \leq \bar{\rho}(t, x_2) \leq M_2, \quad
\forall\, (t, x_2) \in \mathbb{R}^+ \times \mathbb{R}.
\end{equation*}
\end{rmk}

\medskip
\noindent\textbf{Challenges and methodology.}
\smallskip

The primary obstacle to establishing global well-posedness for \eqref{eqns-ains-1} is controlling the time-weighted gradient norm. This requires a uniform-in-time bound for the quantity $\int_0^t \|\tau^{\frac{1}{2}} \nabla u\|_{L^{\infty}}^2 \, d\tau$. This difficulty stems from the lack of full viscosity in the momentum equation. The interplay among the density $\rho$, the acceleration $\partial_t u$, and the pressure $\Pi$, along with the nonorthogonal relation between $\rho \partial_t u$ and $\nabla \Pi$, precludes the use of straightforward energy methods that are applicable to the classical Navier--Stokes equations. To overcome this challenge, we adapt the strategy of Yudovich \cite{Yudovich1963}, who solved the 2D classical incompressible Euler equations via the vorticity equation. This approach is motivated by our observation that the vorticity equation derived from \eqref{momentum-ains-2} constitutes a fully dissipative system, yet retains a key structural similarity with its counterpart in Euler's equations. The proof is completed by synthesizing two key ingredients. First, via time-weighted energy estimates---which rely on the boundedness of the density and necessary \textit{a priori} bounds---we derive the crucial uniform estimate for $\int_0^t \|\tau^{\frac{1}{2}} \nabla u\|_{L^{\infty}}^2 \, d\tau$. Second, we employ a duality argument from \cite{HSWZ2024} to establish uniqueness. Together, these results yield the global well-posedness of system \eqref{eqns-ains-1}.

\vspace{0.2cm}

A central challenge in analyzing system \eqref{eqns-HINS-1} is obtaining an estimate for $\int_0^t \|\nabla u\|_{L^{\infty}} \, d\tau$ without the benefit of full viscous dissipation. This control plays a dual role, enabling the propagation of regularity while simultaneously ensuring that the density variation and velocity remain suitably small. As previously noted, even in the homogeneous case, the integral $\int_0^t \|\nabla u(\tau)\|_{L^{\infty}} \, d\tau$ may exhibit algebraic growth in time $t > 0$, regardless of the smallness of the velocity. This algebraic growth, in turn, leads to exponential growth of $\|\nabla \rho\|_{L^{\infty}}$ in system \eqref{eqns-HINS-1}. To address this, we aim to control $\|\nabla u\|_{L^{\infty}}$ using an anisotropic interpolation inequality (see \eqref{est-Lip-u-1}). However, this strategy faces a fundamental obstacle: the exponential growth of $\|\nabla \rho\|_{L^{\infty}}$ itself. To counteract this growth, it becomes necessary to establish a compensating exponential decay for the velocity field. The key to achieving this balance lies in the application of Poincaré's inequality. A crucial observation is that while the horizontal mean of $u$ is not conserved, its oscillatory component $u_{\neq}$ satisfies Poincaré's inequality $\|u_{\neq}\|_{L^2(\Omega)} \lesssim \|\partial_1 u_{\neq}\|_{L^2(\Omega)}$. This inequality, employed within an energy framework, ultimately yields the desired exponential decay of $\|u_{\neq}\|_{L^2(\Omega)}$.

\vspace{0.2cm}

We conclude this section by introducing the notations used throughout the paper.

\medskip
\noindent\textbf{Notations.} Let $A$ and $B$ be two operators. We denote by $[A, B] \stackrel{\text{def}}{=} AB - BA$ the commutator of $A$ and $B$. The notation $a \lesssim b$ means that there exists a uniform constant $C > 0$ (which may vary from line to line) such that $a \leq C b$. We denote by $(a \,|\, b)$  the $L^2(\Om)$ inner product of $a$ and $b$. We designate $(-\partial_2, \partial_1)^{T}$ as $\nabla^{\perp}$.

Let $X$ be a Banach space and $I$ an interval in $\mathbb{R}$. $C(I; X)$ stands for the space of continuous functions from $I$ into $X$. For $q \in [1, +\infty]$, the space $L^q(I; X)$ consists of measurable functions $f : I \to X$ such that $t \mapsto \|f(t)\|_X$ belongs to $L^q(I)$. In particular, if $I=[0,t],$ we may simplify $L^q(I)$ by $L^q_t.$
 If $v = (v^1, v^2)^T \in X$ is a vector-valued function, we mean that each component $v^i$ ($i = 1, 2$) belongs to $X$.
 
\medskip

\renewcommand{\theequation}{\thesection.\arabic{equation}}
\setcounter{equation}{0}
\section{The global well-posedness of the system (AINS) }\label{sect3}

In this section, we first derive the  time-weighted energy estimates for sufficiently smooth solutions of \eqref{eqns-ains-1} with bounded density. We then present the proof of Theorem \ref{thm-GUS-NS}.

\subsection{Time-weighted energy estimates with bounded density}

\begin{prop}[$L^2$ estimate of $u$]\label{prop-L-2-u-1}
{\sl Let $\rho_0$ satisfy \eqref{t.1} and let $u_0 \in L^2(\mathbb{R}^2)$ be a solenoidal vector field. Let $(\rho, u)$ be a sufficiently smooth solution of \eqref{eqns-ains-1} on $[0, T^{\ast}[$. Then for any $T \in [0, T^{\ast}[$, there hold \eqref{bdd-density-visc-1} and
\begin{equation}\label{visco-resis-1}
\|\mu(\rho)-1\|_{L^{\infty}_T(L^\infty)}=\|\mu(\rho_0)-1\|_{L^\infty}.
\end{equation}
Moreover, there exists a positive constant $C$ such that
\begin{equation}\label{L-2-u-0}
\|u\|_{L^\infty(\R^+; L^2)}^2 + \|\nabla u\|_{L^2(\R^+;L^2)}^2 \leq C\|u_0\|_{L^2}^2.
\end{equation}}
\end{prop}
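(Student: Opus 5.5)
The plan has two parts: a transport argument for the density and a direct energy estimate for the velocity.

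\emph{Density bounds.} The density satisfies the pure transport equation $\partial_t\rho+u\cdot\nabla\rho=0$ with a smooth, divergence-free field $u$. Let $X(t,y)$ be the flow defined by $\partial_t X=u(t,X)$, $X(0,y)=y$. Then $\rho(t,X(t,y))=\rho_0(y)$, and $X(t,\cdot)$ is a measure-preserving diffeomorphism of $\mathbb{R}^2$. This immediately gives \eqref{bdd-density-visc-1} on $[0,T]$. Applying the same reasoning to $\mu(\rho)$, which is also transported, gives $\mu(\rho)(t,X(t,y))=\mu(\rho_0)(y)$. Taking the supremum over $y$ yields the equality \eqref{visco-resis-1}; this step uses only that $X(t,\cdot)$ is a bijection.

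\emph{Energy identity.} Take the $L^2$ inner product of the momentum equation of \eqref{eqns-ains-1} with $u$.
\begin{itemize}
\item The pressure term vanishes because $\mathrm{div}\,u=0$.
\item For the inertial term, combine the transport equation with $\mathrm{div}\,u=0$ in the usual way:
\[
(\rho(\partial_t u+u\cdot\nabla u)\,|\,u)=\frac12\frac{d}{dt}\|\sqrt\rho\,u\|_{L^2}^2 .
\]
\item For the dissipative term, integrate by parts:
\[
-(\partial_2^2u^1\,|\,u^1)-(\partial_1^2u^2\,|\,u^2)=\|\partial_2u^1\|_{L^2}^2+\|\partial_1u^2\|_{L^2}^2 .
\]
\end{itemize}
This gives
\[
\frac12\frac{d}{dt}\|\sqrt\rho\,u\|_{L^2}^2+\|\partial_2u^1\|_{L^2}^2+\|\partial_1u^2\|_{L^2}^2=0 .
\]

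\emph{Recovering the full gradient.} This is the only point needing care, since the dissipation above controls only two of the four derivatives. The divergence-free condition supplies the rest. It gives $\|\partial_1u^1\|_{L^2}=\|\partial_2u^2\|_{L^2}$. It also gives
\[
\int \partial_1u^1\,\partial_2u^2\,dx=-\|\partial_1u^1\|_{L^2}^2 ,
\]
while integrating by parts twice shows this same integral equals $\int\partial_2u^1\,\partial_1u^2\,dx$. Hence
\[
\|\partial_2u^1\|_{L^2}^2+\|\partial_1u^2\|_{L^2}^2\ \ge\ 2\Big|\int\partial_2u^1\,\partial_1u^2\,dx\Big| = 2\|\partial_1u^1\|_{L^2}^2=\|\partial_1u^1\|_{L^2}^2+\|\partial_2u^2\|_{L^2}^2 .
\]
Therefore the dissipation is at least $\frac12\|\nabla u\|_{L^2}^2$. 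Equivalently, one can note
\[
\|\partial_2u^1\|_{L^2}^2+\|\partial_1u^2\|_{L^2}^2=\tfrac12\|\omega\|_{L^2}^2+\tfrac12\|\partial_2u^1+\partial_1u^2\|_{L^2}^2\ \ge\ \tfrac12\|\nabla u\|_{L^2}^2 ,
\]
using $\|\omega\|_{L^2}=\|\nabla u\|_{L^2}$ for divergence-free fields.

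\emph{Conclusion.} Integrate the energy identity in time. Using $M_1\le\rho\le M_2$ to compare $\|\sqrt\rho\,u\|_{L^2}$ with $\|u\|_{L^2}$, we obtain \eqref{L-2-u-0} on $[0,T]$ with $C$ depending only on $M_1,M_2$. Since $T<T^*$ is arbitrary, the estimate holds uniformly, which gives the bound as stated.
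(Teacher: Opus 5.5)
Your proposal is correct and follows essentially the same route as the paper: the density bounds come from transport along the flow, and the velocity bound comes from the energy identity together with recovering the full gradient from the partial dissipation $\|(\partial_2u^1,\partial_1u^2)\|_{L^2}$ via $\mathrm{div}\,u=0$. Your second version, $\|\partial_2u^1\|_{L^2}^2+\|\partial_1u^2\|_{L^2}^2\ge\frac12\|\omega\|_{L^2}^2=\frac12\|\nabla u\|_{L^2}^2$, is the paper's Biot--Savart step $\|\nabla u\|_{L^2}\lesssim\|\omega\|_{L^2}\lesssim\|(\partial_2u^1,\partial_1u^2)\|_{L^2}$ with the constant made explicit, and your integration-by-parts computation is an elementary rederivation of the same fact.
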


\begin{proof}
First, from the continuity equation in \eqref{eqns-ains-1}, we deduce that for any $T \in [0, T^{\ast}[$, both \eqref{bdd-density-visc-1} and \eqref{visco-resis-1} hold.

Next, taking the $L^2$ inner product of the velocity equation in \eqref{eqns-ains-1} with $u$ and integrating by parts yields
\begin{equation}\label{L2-ains-u-1}
\frac{1}{2}\frac{d}{dt}\|\sqrt{\rho}u(t)\|_{L^2}^2 + \|(\partial_2u^1,\,\partial_1u^2)\|_{L^2}^2 = 0.
\end{equation}
Here and throughout, we denote $\omega \eqdefa \partial_1 u^2 - \partial_2 u^1$, so that
\begin{equation}\label{S2eq1}
\partial_1\omega = \Delta u^2 \quad \text{and} \quad \partial_2\omega = -\Delta u^1.
\end{equation}
Then, by the Biot--Savart law, $u = \nabla^\perp \Delta^{-1}\omega$, from which we obtain
\begin{align*}
\|\nabla u\|_{L^2} \lesssim \|\omega\|_{L^2} \lesssim \|(\partial_2u^1,\,\partial_1u^2)\|_{L^2}.
\end{align*}
Combining this with \eqref{L2-ains-u-1} gives
\begin{equation*}
\frac{d}{dt}\|\sqrt{\rho}u(t)\|_{L^2}^2 + c_0\|\nabla u\|_{L^2}^2 \leq 0.
\end{equation*}
By integrating the above equation over $[0,t]$ and using \eqref{bdd-density-visc-1}, we obtain \eqref{L-2-u-0}.
\end{proof}

\begin{prop}[$H^1$ estimate of $u$]\label{lem-H-1}
{\sl Under the assumptions of Proposition \ref{prop-L-2-u-1},
 for any $T \in [0, T^{\ast}[$, we have
\begin{equation}\label{momentum-ains-18}
\|t^{\frac{1}{2}}\omega\|_{L^{\infty}_T(L^2)}^2 + \|t^{\frac{1}{2}}(u_t,\,\Delta{u},\,\nabla\Pi)\|_{L^{2}_T(L^2)}^2
\leq C \|u_0\|_{L^2}^2 \exp\left(C \|u_0\|_{L^2}^4\right).
\end{equation}
}
\end{prop}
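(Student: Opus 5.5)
The plan is to run the classical time-weighted $H^1$ energy estimate for 2-D (INS), replacing the Stokes operator by the anisotropic operator $\mathcal{A}u\eqdefa-\left(\begin{array}{c}\partial_2^2u^1\\ \partial_1^2u^2\end{array}\right)$. Two facts about $\mathcal{A}$ on solenoidal fields make this work.

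\textbf{First fact: the dissipation is elliptic.} Write $u=\nabla^\perp\psi$ with $\Delta\psi=\omega$. Then $\mathcal{A}u=(\partial_2^3\psi,\,-\partial_1^3\psi)^T$, and $\curl(\mathcal{A}u)=-(\partial_1^4+\partial_2^4)\psi$. The symbol $(\xi_1^4+\xi_2^4)/|\xi|^2$ of the map $\omega\mapsto(\partial_1^4+\partial_2^4)\Delta^{-1}\omega$ is bounded below by $\frac12|\xi|^2$, so the vorticity equation is genuinely dissipative. This is the observation \eqref{momentum-ains-2} alluded to in the introduction.

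\textbf{Second fact: an anisotropic Stokes estimate.} Let $\P$ be the Leray projector and set $f\eqdefa-\rho D_tu$. Applying $\P$ to the momentum equation gives $\P\mathcal{A}u=\P f$. By Plancherel, $|\widehat{\P\mathcal{A}u}(\xi)|=|\xi|^{-1}(\xi_1^4+\xi_2^4)|\widehat\psi(\xi)|\geq\frac12|\xi|^3|\widehat\psi(\xi)|$, hence
\[
\|\nabla^2u\|_{L^2}\le 2\|\P f\|_{L^2}\le C\|\rho D_tu\|_{L^2}.
\]
Then $\nabla\Pi=(I-\P)(f-\mathcal{A}u)$ is bounded by the same quantity.

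Next we estimate the convection term. By Ladyzhenskaya's inequality, $\|u\cdot\nabla u\|_{L^2}\lesssim\|u\|_{L^2}^{1/2}\|\nabla u\|_{L^2}\|\nabla^2u\|_{L^2}^{1/2}$. Inserting this into the elliptic bound and absorbing the $\nabla^2u$ factor by Young's inequality, using \eqref{bdd-density-visc-1}, gives
\begin{equation}\label{plan-ell}
\|(\nabla^2u,\nabla\Pi)\|_{L^2}\lesssim\|u_t\|_{L^2}+\|u\|_{L^2}\|\nabla u\|_{L^2}^2.
\end{equation}

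\textbf{Energy identity.} Take the $L^2$ inner product of the momentum equation with $u_t$. The pressure term drops by incompressibility. The viscous term contributes $\frac12\frac{d}{dt}\|(\partial_2u^1,\partial_1u^2)\|_{L^2}^2$, and as in the proof of Proposition \ref{prop-L-2-u-1} this quantity is comparable to $\|\omega\|_{L^2}^2$ and to $\|\nabla u\|_{L^2}^2$. This gives
\[
\frac12\frac{d}{dt}\|(\partial_2u^1,\partial_1u^2)\|_{L^2}^2+\|\sqrt\rho u_t\|_{L^2}^2\le M_2\|u\cdot\nabla u\|_{L^2}\|u_t\|_{L^2}.
\]
Using the Ladyzhenskaya bound with \eqref{plan-ell}, the right-hand side is at most $\frac12\|\sqrt\rho u_t\|_{L^2}^2+C\|u\|_{L^2}^2\|\nabla u\|_{L^2}^4$. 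Multiplying by $t$ then yields
\[
\frac{d}{dt}\Bigl(t\|(\partial_2u^1,\partial_1u^2)\|_{L^2}^2\Bigr)+t\|\sqrt\rho u_t\|_{L^2}^2\le\|(\partial_2u^1,\partial_1u^2)\|_{L^2}^2+C\|u\|_{L^2}^2\|\nabla u\|_{L^2}^2\cdot t\|\nabla u\|_{L^2}^2.
\]

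\textbf{Gronwall step.} By \eqref{L-2-u-0}, $\int_0^T\|u\|_{L^2}^2\|\nabla u\|_{L^2}^2\,dt\le C\|u_0\|_{L^2}^4$ and $\int_0^T\|\nabla u\|_{L^2}^2\,dt\le C\|u_0\|_{L^2}^2$. Gronwall's inequality therefore bounds $\|t^{\frac12}\omega\|_{L^\infty_T(L^2)}^2+\|t^{\frac12}u_t\|_{L^2_T(L^2)}^2$ by $C\|u_0\|_{L^2}^2\exp(C\|u_0\|_{L^2}^4)$. Multiplying \eqref{plan-ell} by $t^{\frac12}$ and using the bound $\int_0^T t\|u\|^2_{L^2}\|\nabla u\|_{L^2}^4\,dt\le \|u\|_{L^\infty_T(L^2)}^2\|t^{\frac12}\nabla u\|_{L^\infty_T(L^2)}^2\|\nabla u\|^2_{L^2_T(L^2)}$ then controls $\|t^{\frac12}(\Delta u,\nabla\Pi)\|_{L^2_T(L^2)}^2$ by the same quantity, which gives \eqref{momentum-ains-18}.

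The main obstacle is the anisotropic Stokes estimate \eqref{plan-ell} with variable density. Since $\rho u_t$ and $\nabla\Pi$ are not orthogonal, one cannot just test by $\Delta u$. The route above avoids this by projecting with $\P$ and using the Fourier lower bound $(\xi_1^4+\xi_2^4)/|\xi|^2\ge\frac12|\xi|^2$, which only requires $\rho\in L^\infty$.
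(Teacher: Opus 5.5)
Your proof is correct, but it gets second-derivative control by a different mechanism than the paper's proof of this proposition.

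\textbf{The paper's route.} The paper tests the vorticity equation \eqref{momentum-ains-2} against $\omega$, treating $(1-\rho)D_tu$ as a source. Using \eqref{momentum-ains-5}, this gives the evolution inequality \eqref{momentum-ains-7} for $\|\omega\|_{L^2}^2$, with $\frac12\|\Delta u\|_{L^2}^2$ as dissipation. It then adds $\frac1{2C_1}$ times this inequality to the $u_t$-identity \eqref{momentum-ains-8}. The extra $\|\sqrt\rho\,u_t\|_{L^2}^2$ is absorbed by the $u_t$-dissipation, and the leftover $\|\Delta u\|_{L^2}$ by the vorticity dissipation.

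\textbf{Your route.} You prove a static anisotropic Stokes estimate, $\|(\nabla^2u,\nabla\Pi)\|_{L^2}\lesssim\|\rho D_tu\|_{L^2}$, using the Leray projector and the same symbol bound $\xi_1^4+\xi_2^4\ge\frac12|\xi|^4$. You then close the $u_t$-identity on its own. This is the estimate the paper derives later as \eqref{est6} in Proposition \ref{lem-H-2}. There it tests the momentum equation with $-\nabla^\perp\omega=-\Delta u$, which removes the pressure and produces the same multiplier; you have simply moved it earlier.

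\textbf{What each approach buys.} Yours needs a single energy functional and no balancing constant. It also gives a clean pressure bound through $(I-\mathbb{P})$. The paper's pressure step appeals to an $L^2$-orthogonality between $\nabla\Pi$ and $(\partial_2^2u^1,\partial_1^2u^2)^T$. That fails in general, since for divergence-free $u$ this field has divergence $-(\partial_1^3u^1+\partial_2^3u^2)\neq0$. The paper's final bound still holds because $\|\Delta u\|_{L^2}$ appears on the right anyway. The paper's route does produce the differential inequality \eqref{momentum-ains-7} for $\|\omega\|_{L^2}^2$ itself, which it reuses in \eqref{est6ddd} to make the corrected energy $E_1$ coercive. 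In your framework, your bound $\frac{d}{dt}\|(\partial_2u^1,\partial_1u^2)\|_{L^2}^2+\|\sqrt\rho\,u_t\|_{L^2}^2\le C\|u\|_{L^2}^2\|\nabla u\|_{L^2}^4$ would play that role equally well, since $\|(\partial_2u^1,\partial_1u^2)\|_{L^2}\sim\|\omega\|_{L^2}$.
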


\begin{proof}
From the momentum equations in \eqref{eqns-ains-1}, we write
\begin{equation*}
\partial_t u + u \cdot \nabla u - (\partial_2^2u^1,\, \partial^2_1u^2)^T + \nabla \Pi = (1 - \rho)(\partial_t u+ u \cdot \nabla  u).
\end{equation*}
Applying the operator $\nabla^{\perp} \cdot$ to these equations gives
\begin{equation*}\label{momentum-ains-1}
\partial_t \omega + (u \cdot \nabla) \omega - \bigl(\partial_1^3 u^2 - \partial_2^3 u^1\bigr)
= \nabla^{\perp} \cdot \bigl[(1 - \rho)(\partial_t + u \cdot \nabla) u \bigr].
\end{equation*}
Yet by Biot-Sarvart law, $u =\nabla^{\perp} \Delta^{-1}\omega$,  we have
\begin{equation}\label{momentum-ains-2aaa}
\partial_1^3 u^2- \partial_2^3 u^1 = (\partial_1^4 + \partial_2^4)\Delta^{-1}\omega.
\end{equation}
Thus,
\begin{equation}\label{momentum-ains-2}
\partial_t \omega + (u \cdot \nabla) \omega - (\partial_1^4 + \partial_2^4)\Delta^{-1}\omega
= \nabla^{\perp} \cdot \bigl[(1 - \rho)(\partial_t + u \cdot \nabla) u \bigr].
\end{equation}

Taking the $L^2$ inner product of \eqref{momentum-ains-2} with $\omega$ and integrating by parts yields
\begin{equation}\label{momentum-ains-3}
\begin{split}
\frac{1}{2}\frac{d}{dt}\|\omega(t)\|_{L^2}^2
& + \|(\partial_1^4 + \partial_2^4)^{\frac{1}{2}}\Delta^{-\frac{1}{2}}\omega\|_{L^2}^2 \\
& \leq \|\rho - 1\|_{L^\infty} \bigl(\|u_t\|_{L^2} + \|u \cdot \nabla u\|_{L^2}\bigr) \|\nabla^{\perp}\omega\|_{L^2}.
\end{split}
\end{equation}
Noting from \eqref{S2eq1}  that
\begin{equation}\label{momentum-ains-5}
\|(\partial_1^4 + \partial_2^4)^{\frac{1}{2}}\Delta^{-\frac{1}{2}}\omega\|_{L^2}^2 \geq \f12\|\Delta u\|_{L^2}^2,
\end{equation}
substituting \eqref{momentum-ains-5} into \eqref{momentum-ains-3} and using the 2D interpolation inequality $\|a\|_{L^4}\lesssim \|a\|_{L^2}^{\frac12}\|\nabla a\|_{L^2}^{\frac12},$
 we obtain
\begin{equation*}\label{momentum-ains-6}
\begin{split}
\frac{d}{dt}\|\omega(t)\|_{L^2}^2 + \|\Delta u\|_{L^2}^2
&\leq C \|\rho_0 - 1\|_{L^\infty} \bigl(\|u_t\|_{L^2} + \|u\|_{L^4}\|\nabla u\|_{L^4}\bigr) \|\Delta u\|_{L^2} \\
&\leq C \bigl(\|\sqrt{\rho}\,u_t\|_{L^2} + \|u\|_{L^2}^{\frac{1}{2}}\|\nabla u\|_{L^2} \|\Delta u\|_{L^2}^{\frac{1}{2}}\bigr) \|\Delta u\|_{L^2}.
\end{split}
\end{equation*}
Applying Young's inequality yields
\begin{equation}\label{momentum-ains-7}
\frac{d}{dt}\|\omega(t)\|_{L^2}^2 + \frac{1}{2}\|\Delta u\|_{L^2}^2
\leq C_1 \bigl(\|\sqrt{\rho}\,u_t\|_{L^2}^2 + \|u\|_{L^2}^2 \|\nabla u\|_{L^2}^2 \|\omega\|_{L^2}^2 \bigr).
\end{equation}

On the other hand, taking the $L^2$ inner product of the momentum equation in \eqref{eqns-ains-1} with $u_t$ gives
\begin{equation*}\label{momentum-ains-7'}
\begin{split}
\|\sqrt{\rho}\,u_t\|_{L^2}^2 + \frac{1}{2}\frac{d}{dt}\|(\partial_2u^1,\partial_1u^2)(t)\|_{L^2}^2
\leq C \|\sqrt{\rho}u_t\|_{L^2} \|u\|_{L^4}\|\nabla u\|_{L^4},
\end{split}
\end{equation*}
from which we infer
\begin{equation}\label{momentum-ains-8}
\frac{d}{dt}\|(\partial_2u^1,\partial_1u^2)(t)\|_{L^2}^2 + \|\sqrt{\rho}\,u_t\|_{L^2}^2
\leq C \|u\|_{L^2}\|\omega\|_{L^2}^2\|\Delta u\|_{L^2}.
\end{equation}

Adding equation \eqref{momentum-ains-7} multiplied by $\frac{1}{2C_1}$ to equation \eqref{momentum-ains-8} and then applying Young's inequality yields
\begin{equation*}
\begin{split}
 \frac{d}{dt}\Bigl(\|(\partial_2u^1,\partial_1u^2)(t)\|_{L^2}^2+\frac{1}{2C_1}\|\omega(t)\|_{L^2}^2\Bigr)&
+ \f12\|\sqrt{\rho}\,u_t\|_{L^2}^2 + \frac1{8C_1}\|\Delta u\|_{L^2}^2 \\
&\leq  C\|u\|_{L^2}^2\|\nabla u\|_{L^2}^2\|\omega\|_{L^2}^2.
\end{split}
\end{equation*}
Multiplying this inequality by $t$, we obtain
\begin{equation*}\label{momentum-ains-11}
\begin{split}
&\frac{d}{dt} \Bigl(\|t^{\frac{1}{2}}(\partial_2u^1,\partial_1u^2)(t)\|_{L^2}^2+\frac{1}{2C_1}\|t^{\frac{1}{2}}\omega(t)\|_{L^2}^2\Bigr)
+  \bigl(\f12\|t^{\frac{1}{2}}\sqrt{\rho}\,u_t\|_{L^2}^2 + \frac1{8C_1}\|t^{\frac{1}{2}}\Delta u\|_{L^2}^2 \bigr)\\
&\leq \|(\partial_2u^1,\partial_1u^2)\|_{L^2}^2 +\frac{1}{2C_1} \|\omega\|_{L^2}^2
+ C\|u\|_{L^2}^2\|\nabla u\|_{L^2}^2\|t^{\frac12}\omega\|_{L^2}^2.	
\end{split}
\end{equation*}
Applying Gronwall's inequality leads to
\begin{equation*}\label{momentum-ains-12}
\begin{split}
\|t^{\frac12}\omega\|_{L^{\infty}_T(L^2)}^2
+ \|t^{\frac12}(u_t,\,\Delta u)\|_{L^{2}_T(L^2)}^2
\leq C\|\nabla u\|_{L^{2}_T(L^2)}^2
\exp\Bigl(C\|u\|_{L^{\infty}_T(L^2)}^2\|\nabla u\|_{L^{2}_T(L^2)}^2\Bigr),
\end{split}
\end{equation*}
which together with \eqref{L-2-u-0} implies
\begin{equation}\label{momentum-ains-13}
\begin{split}
\|t^{\frac12}\omega\|_{L^{\infty}_T(L^2)}^2
+ \|t^{\frac12}(u_t,\,\Delta u)\|_{L^{2}_T(L^2)}^2
\leq C\|u_0\|_{L^2}^2 \exp\bigl(C\|u_0\|_{L^2}^4\bigr).
\end{split}
\end{equation}

Notice that $\na\Pi$ is perpendicular to $\begin{pmatrix}\partial_2^2u^1 \\ \partial_1^2u^2\end{pmatrix}$ in $L^2(\R^2)$ and 
\begin{equation*}\label{momentum-ains-14}
\nabla\Pi-\begin{pmatrix}\partial_2^2u^1 \\ \partial_1^2u^2\end{pmatrix} = -\rho (u_t +u\cdot\nabla{u}),
\end{equation*}
which implies
\begin{equation*}\label{momentum-ains-15}
\|\nabla\Pi\|_{L^2} \lesssim \|\rho\|_{L^{\infty}}
\bigl(\|u_t\|_{L^2} + \|u\|_{L^4}\|\nabla u\|_{L^4}\bigr). 
\end{equation*}
Using the 2D interpolation inequality $\|a\|_{L^4}\lesssim \|a\|_{L^2}^{\frac12}\|\nabla a\|_{L^2}^{\frac12},$
we obtain
\begin{equation*}\label{momentum-ains-16}
\begin{split}
\|\nabla\Pi\|_{L^2}
&\lesssim \|u_t\|_{L^2}
+ \|u\|_{L^2}^{\frac12}\|\nabla u\|_{L^2}\|\Delta u\|_{L^2}^{\frac12} \\
&\lesssim \|(u_t,\,\Delta u)\|_{L^2} + \|u\|_{L^2}\|\nabla u\|_{L^2}^2,
\end{split}
\end{equation*}
from which and \eqref{momentum-ains-13} we infer
\begin{equation}\label{momentum-ains-17}
\begin{split}
\|t^{\frac12}\nabla\Pi\|_{L^{2}_T(L^2)}
&\leq C\Bigl(\|t^{\frac12}(u_t,\,\Delta u)\|_{L^{2}_T(L^2)}
+ \|u\|_{L^{\infty}_T(L^2)} \|\nabla u\|_{L^{2}_T(L^2)}\|t^{\frac12}\omega\|_{L^{\infty}_T(L^2)}\Bigr) \\
&\leq C \|u_0\|_{L^2}^2 \exp\bigl(C\|u_0\|_{L^2}^4\bigr).	
\end{split}
\end{equation}

Combining \eqref{momentum-ains-13} with \eqref{momentum-ains-17} yields \eqref{momentum-ains-18}.
This completes the proof of Proposition \ref{lem-H-1}.
\end{proof}
Let us now turn to the $\dot H^2$ estimate of $u$.

\begin{prop}[$\dot H^2$ estimate of $u$]\label{lem-H-2}
{\sl Under the assumptions of Proposition \ref{prop-L-2-u-1}, for any $T \in [0, T^{\ast}[$, we have
\begin{equation}\label{est-noj-nabla2ub-sum-3}
\begin{split}
\|t(D_tu, \Delta u,\nabla\Pi)\|_{L^{\infty}_{T}(L^2)}^2
&+\|t\,\nabla D_tu\|_{L^2_{T}(L^2)}^2+\|t^{\frac{1}{2}}\,\nabla u\|_{L^2_T(L^{\infty})}^2 \leq C \|u_0\|_{L^2}^2,
\end{split}
\end{equation}
where $D_t\eqdefa \p_t+u\cdot\na$ and the constant $C$ depends only on $\|u_0\|_{L^{2}}$, $M_1$ and $M_2$.}
\end{prop}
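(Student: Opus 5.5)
We follow the standard material-derivative route used for inhomogeneous Navier--Stokes with bounded density (as in Danchin--Mucha and \cite{HSWZ2024}), with the anisotropic viscous term in place of $\Delta u$.

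\textbf{Step 1: equation for $D_t u$.} Write the momentum equation as
$$\rho D_t u - L u + \nabla\Pi = 0, \qquad L u \eqdefa (\partial_2^2u^1,\partial_1^2u^2)^T.$$
Apply $D_t$ to it. Since $D_t\rho=0$, we get
$$\rho D_t(D_tu) - L D_t u + \nabla D_t\Pi = -[D_t, L]u + [D_t,\nabla]\Pi .$$
The commutators are bilinear in $\nabla u$ and $\nabla^2 u$, or in $\nabla u$ and $\nabla\Pi$; after integrating by parts they reduce to terms of the form $\int \nabla u\,\nabla u\,\nabla D_tu$ and $\int \nabla u\,\Pi\,\nabla D_tu$-type expressions. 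Test with $t^2 D_tu$. Note that $D_tu$ is not divergence free: $\dv D_tu = \partial_i u^j\partial_j u^i$. Hence the pressure term must be handled by writing $(\nabla D_t\Pi\,|\,D_tu)$ as $-(D_t\Pi\,|\,\partial_iu^j\partial_ju^i)$ and then moving the time derivative off, or, more simply, by testing the original momentum equation with $t^2\partial_t D_t u$ in the spirit of Hoff's estimate.

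For coercivity we need $(-L D_tu\,|\,D_tu)\gtrsim \|\nabla D_tu\|_{L^2}^2$ up to lower-order terms. This follows from the identity used in Proposition \ref{prop-L-2-u-1},
$$\|(\partial_2v^1,\partial_1v^2)\|^2\ge c\|\nabla v\|^2-C\|\dv v\|^2,$$
because $\|\omega\|^2=\|\nabla v\|^2-\|\dv v\|^2$ for general $v$, and $\dv D_tu$ is quadratic in $\nabla u$. The resulting inequality is
$$\frac{d}{dt}\|t\sqrt\rho D_tu\|_{L^2}^2 + c\|t\nabla D_tu\|_{L^2}^2 \le 2t\|\sqrt\rho D_tu\|_{L^2}^2 + C t^2\bigl(\|\nabla u\|_{L^4}^4+\|\nabla u\|_{L^\infty}\|\nabla u\|_{L^2}\|\nabla D_tu\|_{L^2}+\|\nabla u\|_{L^4}\|\Pi\|_{L^4}\|\nabla D_tu\|_{L^2}\bigr).$$

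\textbf{Step 2: elliptic control of $(\nabla^2u,\nabla\Pi)$ by $D_tu$.} Apply $\nabla^\perp\cdot$ as in \eqref{momentum-ains-2}. Together with the orthogonality of $\nabla\Pi$ and $Lu$ noted before \eqref{momentum-ains-17}, this gives
$$\|\Delta u\|_{L^2}+\|\nabla\Pi\|_{L^2}\lesssim\|D_tu\|_{L^2}.$$
Indeed, taking $\nabla^\perp\cdot$ yields $\mathcal{L}\omega=\nabla^\perp\cdot(\rho D_tu)$ with $\mathcal{L}=(\partial_1^4+\partial_2^4)\Delta^{-1}$ elliptic of order 2. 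For $L^p$ and $L^\infty$ bounds we instead use the form $Lu-\nabla\Pi=\rho D_tu$ as an elliptic Stokes-type system; its symbol is invertible on divergence-free fields since $\xi_2^2\xi_2^2+\xi_1^2\xi_1^2\gtrsim|\xi|^4$ after projection. Through Fourier multipliers this gives
$$\|\nabla^2u\|_{L^p}+\|\nabla\Pi\|_{L^p}\lesssim\|D_tu\|_{L^p},\qquad 1<p<\infty.$$
Then Gagliardo--Nirenberg gives
$$\|\nabla u\|_{L^\infty}\lesssim\|\nabla u\|_{L^2}^{1/2}\|D_tu\|_{L^4}^{1/2}\lesssim\|\nabla u\|_{L^2}^{1/2}\|D_tu\|_{L^2}^{1/4}\|\nabla D_tu\|_{L^2}^{1/4}.$$
Similarly $\|\nabla u\|_{L^4}^2\lesssim\|\nabla u\|_{L^2}\|D_tu\|_{L^2}$, and $\|\Pi\|_{L^4}\lesssim\|\nabla\Pi\|_{L^{4/3}}$, which is controlled via the quadratic structure.

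\textbf{Step 3: closing with Gronwall.} Insert the Step 2 bounds into the Step 1 inequality and absorb $\|t\nabla D_tu\|^2$ using Young's inequality. What remains is
$$\frac{d}{dt}\|t\sqrt\rho D_tu\|^2+\tfrac c2\|t\nabla D_tu\|^2\le C t\|D_tu\|^2+C\bigl(\|\nabla u\|_{L^2}^2+\|\nabla u\|_{L^2}^4\bigr)\|t D_tu\|^2 .$$
The forcing satisfies $\int_0^T t\|D_tu\|^2\,dt\lesssim\|u_0\|^2$ by Proposition \ref{lem-H-1}; note $\|t^{1/2}D_tu\|\le\|t^{1/2}u_t\|+\|t^{1/2}u\cdot\nabla u\|$, and the latter is bounded by the same interpolation. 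The Gronwall weight is integrable by \eqref{L-2-u-0}. This yields the bounds on $\|tD_tu\|_{L^\infty_T(L^2)}$ and $\|t\nabla D_tu\|_{L^2_T(L^2)}$, and Step 2 then bounds $\|t(\Delta u,\nabla\Pi)\|_{L^\infty_T(L^2)}$.

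Finally, for the Lipschitz term,
$$\int_0^T t\|\nabla u\|_{L^\infty}^2\,dt\lesssim\int_0^T \|\nabla u\|_{L^2}\,\|t^{1/2}D_tu\|^{1/2}\,\|tD_tu\|^{1/2}\,t^{-1/2}\,\|t\nabla D_tu\|^{1/2}\,dt.$$
The apparent $t^{-1/2}$ is handled by using Hölder with exponents $(2,4,\infty\cdot,4)$. One checks the $t$-powers match: $t\cdot t^{-1/4}\cdot t^{-1/4}\cdot t^{-1/2}=1$, where the factor $t^{-1/2}$ is distributed as $\|D_tu\|^{1/4}\to t^{-1/8}$ per factor. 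If this bookkeeping fails near $t=0$, we split into $[0,1]$ and use the unweighted $H^1$ bound there.

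The main obstacle is Step 1, specifically the pressure and commutator terms. Because the viscous operator is anisotropic, $L$ does not commute with the Leray projector in an orthogonal way for non-solenoidal $D_tu$. Obtaining both coercivity for $\nabla D_tu$ and a bound for $\Pi$ in $L^4$ (not only $\nabla\Pi$ in $L^2$) is delicate. First I would try to use $\omega$-based coercivity and an estimate of $\Pi$ through $\Pi=(-\Delta)^{-1}\dv(\rho D_tu - Lu)$.
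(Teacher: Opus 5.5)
Your overall architecture (apply $D_t$, test with $t^2D_tu$, bound $(\nabla^2u,\nabla\Pi)$ by $D_tu$ through the elliptic structure, close with Gronwall, then interpolate for $\|\nabla u\|_{L^\infty}$) is the paper's. However, the step you flag as the main obstacle is a genuine gap, and the route you suggest for it does not work.

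Write $-(\nabla D_t\Pi\,|\,D_tu)=(D_t\Pi\,|\,\partial_iu^j\partial_ju^i)$ and move the time derivative off $\Pi$. The pressure contribution then consists of two pieces. The first is $\frac{d}{dt}\int\Pi\,\partial_iu^j\partial_ju^i\,dx$, which is missing from your Step 1 inequality and must be carried in a modified energy. The second consists of terms $\int\Pi\,\partial_ju^i\,\partial_iD_tu^j\,dx$. A cubic term $\int\Pi\,\partial_iu^j\partial_ju^k\partial_ku^i\,dx$ also appears, but it vanishes because $\mathrm{tr}(A^3)=0$ for traceless $2\times2$ matrices.

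Neither piece can be handled through $\|\Pi\|_{L^4}$. The a priori bounds give $\nabla\Pi\in L^2$ but no Lebesgue bound on $\Pi$ itself. The estimate $\|\nabla\Pi\|_{L^{4/3}}\lesssim\|D_tu\|_{L^{4/3}}$ is of no use, because $D_tu$ is not controlled in $L^{4/3}(\R^2)$; with variable $\rho$ the source $\mathrm{div}(\rho D_tu)$ has no quadratic structure.

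The idea you are missing is the div--curl lemma combined with $\mathscr{H}^1$--$BMO$ duality. The expressions $\partial_ju^i\partial_iD_tu^j$ and $\partial_iu\cdot\nabla u^i$ are sums of products of divergence-free and curl-free fields. Hence they lie in $\mathscr{H}^1$ with norms $\lesssim\|\nabla u\|_{L^2}\|\nabla D_tu\|_{L^2}$ and $\lesssim\|\nabla u\|_{L^2}^2$ respectively, while $\|\Pi\|_{BMO}\lesssim\|\nabla\Pi\|_{L^2}\lesssim\|\sqrt\rho D_tu\|_{L^2}$. This has two consequences:
\begin{itemize}
\item the bulk term becomes $\|\nabla u\|_{L^2}^2\|\sqrt\rho D_tu\|_{L^2}^2$, which is linear in the energy with a time-integrable coefficient;
\item one gets $2\bigl|\int\Pi\,\partial_iu\cdot\nabla u^i\,dx\bigr|\le\frac12\|\sqrt\rho D_tu\|_{L^2}^2+C_2\|\omega\|_{L^2}^4$.
\end{itemize}
This is why the paper works with the coercive functional $E_1=\|\sqrt\rho D_tu\|_{L^2}^2-2\int\Pi\,\partial_iu\cdot\nabla u^i\,dx+2C_2\|\omega\|_{L^2}^4$ and controls the last piece through \eqref{momentum-ains-7}. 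An alternative also works: integrate by parts onto $\nabla\Pi$, e.g.\ $\int\Pi\,\partial_ju^i\partial_iD_tu^j\,dx=-\int\partial_i\Pi\,\partial_ju^iD_tu^j\,dx$, and close with a global-in-time H\"older argument. What is indispensable is a step that never uses a Lebesgue norm of $\Pi$.

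There is also an error in your Lipschitz step. Your bound by $\|D_tu\|_{L^4}^{1/2}$ amounts to $\|\nabla u\|_{L^\infty}\lesssim\|\nabla u\|_{L^2}^{1/2}\|\nabla^2u\|_{L^4}^{1/2}$, which is not scale invariant in $\R^2$. The correct exponents are $\frac13,\frac23$; the paper uses $\|\nabla u\|_{L^\infty}\lesssim\|\nabla u\|_{L^4}^{1/2}\|\nabla^2u\|_{L^4}^{1/2}$ together with $\|\nabla^2u\|_{L^4}\lesssim\|D_tu\|_{L^4}$.
\begin{itemize}
\item With your exponents you get $t\|\nabla u\|_{L^\infty}^2\lesssim t^{1/4}\|\nabla u\|_{L^2}\|t^{1/2}D_tu\|_{L^2}^{1/2}\|t\nabla D_tu\|_{L^2}^{1/2}$. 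The leftover $t^{1/4}$ is harmful as $t\to\infty$, not near $t=0$, so splitting off $[0,1]$ cannot produce a bound uniform in $T$.
\item With the correct inequality, $t\|\nabla u\|_{L^\infty}^2\lesssim\|\nabla u\|_{L^2}^{1/2}\|t^{1/2}\nabla^2u\|_{L^2}^{1/2}\|t^{1/2}D_tu\|_{L^2}^{1/2}\|t\nabla D_tu\|_{L^2}^{1/2}$. The weights balance exactly, and H\"older in time with four $L^4_t$ factors closes.
\end{itemize}

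Likewise, $\|\nabla u\|_{L^\infty}$ should not enter the commutator terms of Step 1. After integrating by parts with $\mathrm{div}\,u=0$, they are bounded by $\|\nabla u\|_{L^4}^2\|\nabla D_tu\|_{L^2}\lesssim\|\nabla u\|_{L^2}\|\sqrt\rho D_tu\|_{L^2}\|\nabla D_tu\|_{L^2}$. After Young's inequality this gives only the Gronwall weight $\|\nabla u\|_{L^2}^2$. A weight $\|\nabla u\|_{L^2}^4$, as in your Step 3, is in general not integrable near $t=0$ for $L^2$ data.
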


\begin{proof}
We divide the proof of \eqref{est-noj-nabla2ub-sum-3} into the following steps:

\smallskip
\noindent\textbf{$\bullet$ The estimate of $\|D_tu\|_{L^2}$}
\smallskip

Applying the material derivative $D_t \eqdefa \partial_t + u \cdot \nabla$ to the momentum equations in \eqref{eqns-ains-1} and using $D_t\rho = 0$, we obtain
\begin{equation}\label{est-Dtu-1}
\begin{split}
&\rho\partial_t D_t u + \rho(u \cdot \nabla) D_t u
 -\begin{pmatrix}\partial_2^2 D_t u^1 \\ \partial_1^2 D_t u^2\end{pmatrix} \\
&= -\begin{pmatrix}2\partial_2[(\partial_2 u \cdot \nabla) u^1] - \partial_2^2 u \cdot \nabla u^1 \\
2\partial_1[(\partial_1 u \cdot \nabla) u^2] - \partial_1^2 u \cdot \nabla u^2\end{pmatrix}
 -\bigl(\nabla\Pi_t + (u \cdot \nabla)\nabla\Pi\bigr).
\end{split}
\end{equation}
Taking the $L^2$ inner product of \eqref{est-Dtu-1} with $D_t u$ and integrating by parts yields
\begin{equation}\label{est4}
\begin{split}
\frac{1}{2}&\frac{d}{dt}\|\sqrt{\rho}D_t u(t)\|_{L^2}^2
+\|(\partial_2 D_t u^1,\partial_1 D_t u^2)\|_{L^2}^2 \\
=&-2\int_{\R^2}\partial_2[(\partial_2 u \cdot \nabla) u^1] \ |\ D_t u^1\,dx
 -2\int_{\R^2}\partial_1[(\partial_1 u \cdot \nabla) u^2]\ |\  D_t u^2\,dx \\
&+\int_{\R^2}\partial_2^2 u \cdot \nabla u^1 \ |\  D_t u^1\,dx
 +\int_{\R^2}\partial_1^2 u \cdot \nabla u^2 \, |\  D_t u^2\,dx \\
&-\int_{\R^2}\bigl(\nabla\Pi_t + (u \cdot \nabla)\nabla\Pi\bigr)\ |\  D_t u\,dx
\eqdefa  I_1 + I_2 + I_3.
\end{split}
\end{equation}

For $I_1$, we observe that
\begin{equation}\label{est4aaa}
\begin{split}
|I_1|
&\leq \|\nabla u\|_{L^4}^2 \|(\partial_2 D_t u^1,\partial_1 D_t u^2)\|_{L^2} \\
&\leq \frac{1}{4}\|(\partial_2 D_t u^1,\partial_1 D_t u^2)\|_{L^2}^2
+ C\|\nabla u\|_{L^4}^4.
\end{split}
\end{equation}

To estimate $I_2$, we first integrate by parts and then use duality between $BMO$ and the Hardy space $\mathscr{H}^1$:
\begin{align*}
|I_2| &= \Bigl|\int_{\R^2} u^1 \partial_2^2 u \cdot \nabla D_t u^1\,dx
      + \int_{\R^2} u^2 \partial_1^2 u \cdot \nabla D_t u^2\,dx\Bigr| \\
      &\leq \|u\|_{BMO} \bigl(\|\partial_2^2 u \cdot \nabla D_t u^1\|_{\mathscr{H}^1}
      + \|\partial_1^2 u \cdot \nabla D_t u^2\|_{\mathscr{H}^1}\bigr).
\end{align*} 
Since $\dv\,\partial_1^2 u = \dv\,\partial_2^2 u = 0$ and $\curl\nabla D_t u^i = 0$ for $i=1,2$, the div-curl lemma (see \cite{CLM1993}) gives
\begin{equation*}\label{S2eq3}
\begin{split}
\|\partial_2^2 u \cdot \nabla D_t u^1\|_{\mathscr{H}^1}
+ \|\partial_1^2 u \cdot \nabla D_t u^2\|_{\mathscr{H}^1}
\lesssim (\|\partial_1^2 u\|_{L^2} + \|\partial_2^2 u\|_{L^2})\|\nabla D_t u\|_{L^2}.
\end{split}
\end{equation*}
Together with the interpolation inequality $\|f\|_{BMO(\R^2)} \lesssim \|\nabla f\|_{L^2(\R^2)}$, this implies
\begin{equation}\label{S2eq2}
|I_2| \leq C\|\nabla u\|_{L^2}\|\Delta u\|_{L^2}\|\nabla D_t u\|_{L^2}.
\end{equation}

For $I_3$, using integration by parts and $\dv D_t u = \dv\dv(u \otimes u)$, we write
\begin{equation*}
\begin{split}
I_3 &= \int_{\R^2}\Pi_t \, \dv D_t u\,dx - \int_{\R^2} u^j \partial_j \partial_i \Pi \, D_t u^i\,dx \\
    &= \int_{\R^2}\Pi_t \, \dv D_t u\,dx
       + \int_{\R^2}\bigl(u \cdot \nabla\Pi \, \dv D_t u
       + (\partial_i u \cdot \nabla)\Pi \, D_t u^i\bigr)\,dx \\
    &= \int_{\R^2}D_t\Pi\, \dv\dv(u \otimes u)\,dx
       - \int_{\R^2}
       \Pi (\partial_i u \cdot \nabla) D_t u^i\,dx,
\end{split}
\end{equation*}
which follows 
\begin{equation*}
\begin{split}
I_3 = \frac{d}{dt}\int_{\R^2}\Pi \, \dv\dv(u \otimes u)\,dx
       - \int_{\R^2}\bigl(\Pi \, D_t\dv\dv(u \otimes u)
       +\Pi \, \partial_i u^j \partial_j D_t u^i\bigr)\,dx.
\end{split}
\end{equation*}
Note that $[D_t, \partial_i] = -\partial_i u^k \partial_k$, so
\begin{equation*}
\begin{split}
&-\int_{\R^2}\Pi \, D_t\dv\dv(u \otimes u) \,dx= -\int_{\R^2}\Pi \, D_t(\partial_i u^j \partial_j u^i)\,dx \\
&= -2\int_{\R^2}\Pi\,\partial_j u^i \, D_t\partial_iu^j \,dx = -2\int_{\R^2}\Pi \,\partial_j u^i\,(\partial_iD_t u^j+ [D_t,\partial_i] u^j)  \,dx\\
&= -2\int_{\R^2}\Pi \, \partial_j u^i ( \partial_i D_t u^j - \partial_i u^k \partial_k u^j)\,dx.
\end{split}
\end{equation*}
Consequently,
\begin{equation}\label{est4ddd}
\begin{split}
I_3 &= \frac{d}{dt}\int_{\R^2}\Pi (\partial_i u \cdot \nabla u^i)\,dx  - 3\int_{\R^2}\Pi \, \partial_j u^i  \partial_i D_t u^j \,dx
       +2 \int_{\R^2}\Pi \, \partial_i u^j \partial_j u^k \partial_k u^i \,dx \\
    &= \frac{d}{dt}\int_{\R^2}\Pi (\partial_i u \cdot \nabla u^i)\,dx
       - 3\int_{\R^2}\Pi \, \partial_j u^i \, \partial_i D_t u^j \,dx,
\end{split}
\end{equation}
where we used $\dv\, u = 0$ and $\sum_{1\le i,j,k\le2}\partial_i u^j \partial_j u^k \partial_k u^i = 0$ in the last equation.

By duality between $BMO$ and the Hardy space $\mathscr{H}^1$,
\begin{equation*}\label{est4eee}
\Bigl|\int_{\R^2}\Pi (\partial_i u \cdot \nabla) D_t u^i\,dx\Bigr|
\leq \|\Pi\|_{BMO} \|(\partial_i u \cdot \nabla) D_t u^i\|_{\mathscr{H}^1},
\end{equation*}
Since $\dv\, u = 0$ and $\curl\nabla D_t u = 0$, the div-curl lemma (see \cite{CLM1993}) yields
\begin{equation*}\label{est4fff}
\|(\partial_i u \cdot \nabla) D_t u^i\|_{\mathscr{H}^1}
\lesssim \|\nabla u\|_{L^2} \|\nabla D_t u\|_{L^2},
\end{equation*}
which together with the embedding $\|f\|_{BMO(\R^2)} \lesssim \|\nabla f\|_{L^2(\R^2)}$ implies
\begin{equation*}\label{est4ggg}
\Bigl|\int_{\R^2}\Pi (\partial_i u \cdot \nabla) D_t u^i\,dx\Bigr|
\lesssim \|\nabla\Pi\|_{L^2} \|\nabla u\|_{L^2} \|\nabla D_t u\|_{L^2}.
\end{equation*}
By inserting the above estimate into \eqref{est4ddd}, we find
\begin{equation}\label{est4ggg}
\Bigl| I_3- \frac{d}{dt}\int_{\R^2}\Pi (\partial_i u \cdot \nabla u^i)\,dx \Bigr|
\lesssim \|\nabla\Pi\|_{L^2} \|\nabla u\|_{L^2} \|\nabla D_t u\|_{L^2}.
\end{equation}

Substituting the estimates \eqref{est4aaa}, \eqref{S2eq2} and \eqref{est4ggg} into \eqref{est4} gives
\begin{equation}\label{est5}
\begin{split}
\frac{d}{dt}&\Bigl(\|\sqrt{\rho}D_t u\|_{L^2}^2
- 2\int_{\R^2}\Pi (\partial_i u \cdot \nabla u^i)\,dx\Bigr)
+ \|(\partial_2 D_t u^1,\partial_1 D_t u^2)\|_{L^2}^2 \\
&\lesssim \|\nabla u\|_{L^4}^4
+ \|\nabla u\|_{L^2} \|(\nabla\Pi,\,\Delta u)\|_{L^2} \|\nabla D_t u\|_{L^2}.
\end{split}
\end{equation}
Observing that $\|\curl D_t u\|_{L^2}^2 = \int_{\R^2}(\partial_1 D_t u^2 - \partial_2 D_t u^1)^2\,dx
\leq 2\|(\partial_2 D_t u^1,\partial_1 D_t u^2)\|_{L^2}^2$
and $\dv D_t u = \dv\dv(u \otimes u) = \sum_{1\le j,k\le2}\partial_j u^k \partial_k u^j$, we deduce
\begin{equation}\label{est5ddd}
\begin{split}
\|\nabla D_t u\|_{L^2}
&\lesssim \|\curl D_t u\|_{L^2} + \|\dv D_t u\|_{L^2}  \lesssim \|(\partial_2 D_t u^1,\partial_1 D_t u^2)\|_{L^2} + \|\nabla u\|_{L^4}^2.
\end{split}
\end{equation}
Inserting \eqref{est5ddd} into \eqref{est5} and using $\|\nabla u\|_{L^4} \lesssim \|\nabla u\|_{L^2}^{\frac12}\|\Delta u\|_{L^2}^{\frac12}$, we apply Young's inequality to obtain
\begin{equation}\label{est5eee}
\begin{split}
\frac{d}{dt}&\Bigl(\|\sqrt{\rho}D_t u\|_{L^2}^2
- 2\int_{\R^2}\Pi (\partial_i u \cdot \nabla u^i)\,dx\Bigr)
+  \mathfrak{e}_1\|\nabla D_t u\|_{L^2}^2 \leq C\|\nabla u\|_{L^2}^2 \|(\Delta u, \nabla\Pi)\|_{L^2}^2,
\end{split}
\end{equation}
hand in what follows, all $\mathfrak{e}_j$ (with $j\in\mathbb{N}$) denote positive constants that are independent of the initial data.

\smallskip
	\noindent\textbf{$\bullet$ The estimate of $\|(\Delta u, \nabla\Pi)\|_{L^2}$}
\smallskip

From the momentum equations in \eqref{eqns-ains-1}, we have
\begin{equation}\label{est5ggg}
\|\nabla\Pi\|_{L^2} \leq C\bigl(\|\sqrt{\rho}D_t u\|_{L^2} + \|\Delta u\|_{L^2}\bigr).
\end{equation}
Taking the $L^2$ inner product of the momentum equations in \eqref{eqns-ains-1} with $-\nabla^{\perp}\omega$ and integrating by parts yields
\begin{equation*}\label{est5hhh}
\begin{split}
-(\rho D_t u, \nabla^{\perp}\omega)_{L^2}
- \int_{\R^2}\nabla\Pi \cdot \nabla^{\perp}\omega \,dx
- \Bigl(\begin{pmatrix}-\partial_2^2 u^1 \\ -\partial_1^2 u^2\end{pmatrix},\,
\begin{pmatrix}\partial_2\omega \\ -\partial_1\omega\end{pmatrix}\Bigr)_{L^2} = 0.
\end{split}
\end{equation*}
Since $\dv\, u = 0$, we have
\begin{equation*}
\begin{split}
\Bigl(\begin{pmatrix}-\partial_2^2 u^1 \\ -\partial_1^2 u^2\end{pmatrix},\,
\begin{pmatrix}\partial_2\omega \\ -\partial_1\omega\end{pmatrix}\Bigr)_{L^2}
&= \|\partial_1^2 u^2\|_{L^2}^2 + \|\partial_1^2 u^1\|_{L^2}^2  + \|\partial_2^2 u^1\|_{L^2}^2 + \|\partial_2^2 u^2\|_{L^2}^2,
\end{split}
\end{equation*}
and since $\int_{\R^2}\nabla\Pi \cdot \nabla^{\perp}\omega \,dx = 0$, we infer
\begin{equation*}\label{est5iii}
\begin{split}
\|\Delta u\|_{L^2}^2
&\lesssim \|\sqrt{\rho}D_t u\|_{L^2} \|\nabla\omega\|_{L^2} \\
&\lesssim \|\sqrt{\rho}D_t u\|_{L^2} \|\Delta u\|_{L^2} \leq \frac{1}{2}\|\Delta u\|_{L^2}^2 + C\|\sqrt{\rho}D_t u\|_{L^2}^2,
\end{split}
\end{equation*}
which together with \eqref{est5ggg} implies
\begin{equation}\label{est6}
\|(\Delta u, \nabla\Pi)\|_{L^2} \lesssim \|\sqrt{\rho}D_t u\|_{L^2}.
\end{equation}

Substituting \eqref{est6} into \eqref{est5eee} gives
\begin{equation}\label{est6aaa}
\begin{split}
\frac{d}{dt}&\Bigl(\|\sqrt{\rho}D_t u\|_{L^2}^2
- 2\int_{\R^2}\Pi (\partial_i u \cdot \nabla u^i)\,dx\Bigr)
+ \mathfrak{e}_1\|\nabla D_t u\|_{L^2}^2 \leq C \|\nabla u\|_{L^2}^2 \|\sqrt{\rho}D_t u\|_{L^2}^2.
\end{split}
\end{equation}
Yet by
applying the div-curl lemma \cite{CLM1993} and the embedding $\dot{H}^1(\R^2) \hookrightarrow BMO(\R^2)$ again, we obtain
\begin{equation*}
\begin{split}
\Bigl|\int_{\R^2}\Pi (\partial_i u \cdot \nabla u^i)\,dx\Bigr|
&\leq \|\Pi\|_{BMO} \|\partial_i u \cdot \nabla u^i\|_{\mathscr{H}^1} \lesssim \|\nabla\Pi\|_{L^2} \|\nabla u\|_{L^2}^2,
\end{split}
\end{equation*}
which together with \eqref{est6} yields
\begin{equation}\label{est6ccc}
\begin{split}
2\Bigl|\int_{\R^2}\Pi (\partial_i u \cdot \nabla u^i)\,dx\Bigr|
&\leq C \|\sqrt{\rho}D_t u\|_{L^2} \|\nabla u\|_{L^2}^2 \leq \frac{1}{2} \|\sqrt{\rho}D_t u\|_{L^2}^2 + C_2 \|\omega\|_{L^2}^4.
\end{split}
\end{equation}
Moreover, from \eqref{momentum-ains-7} we have
\begin{equation}\label{est6ddd}
\frac{d}{dt} \|\omega\|_{L^2}^4
= 2\|\omega\|_{L^2}^2 \frac{d}{dt} \|\omega\|_{L^2}^2
\lesssim \|\omega\|_{L^2}^2 \bigl(\|u_t\|_{L^2}^2
+ \|u\|_{L^2}^2 \|\nabla u\|_{L^2}^2 \|\omega\|_{L^2}^2\bigr).
\end{equation}

Define
\begin{equation}\label{S2eq3}
E_1(t) \eqdefa \|\sqrt{\rho}D_t u\|_{L^2}^2
- 2\int_{\R^2}\Pi (\partial_i u \cdot \nabla u^i)\,dx
+ 2C_{2} \|\omega\|_{L^2}^4.
\end{equation}
Then it follows from \eqref{est6ccc} that
\begin{equation}\label{est6ggg}
\begin{split}
\frac{1}{2}\|\sqrt{\rho}D_t u\|_{L^2}^2 + C_{2} \|\omega\|_{L^2}^4
&\leq E_1(t) \leq \frac{3}{2}\|\sqrt{\rho}D_t u\|_{L^2}^2 + 3C_{2} \|\omega\|_{L^2}^4.
\end{split}
\end{equation}
Summing \eqref{est6aaa} with $2C_2 \times$ \eqref{est6ddd} yields
\begin{equation*}\label{est6fff}
\begin{split}
\frac{d}{dt}E_1(t) +\mathfrak{e}_1\|\nabla D_t u\|_{L^2}^2
&\leq C\Bigl(\|\nabla u\|_{L^2}^2 \|\sqrt{\rho}D_t u\|_{L^2}^2
+ \|\omega\|_{L^2}^2 \|u_t\|_{L^2}^2  + \|u\|_{L^2}^2 \|\nabla u\|_{L^2}^2 \|\omega\|_{L^2}^4\Bigr).
\end{split}
\end{equation*}
Multiplying this inequality by $t^2$ gives
\begin{equation*}\label{est6hhh}
\begin{split}
\frac{d}{dt}\bigl( t^2 E_1(t) \bigr) + \mathfrak{e}_1 \|t\,\nabla D_t u\|_{L^2}^2
\leq & 2t E_1(t) + C\Bigl(\|\nabla u\|_{L^2}^2 \|t\sqrt{\rho}D_t u\|_{L^2}^2 \\
&+ \|t^{\frac12}\omega\|_{L^2}^2 \|t^{\frac12} u_t\|_{L^2}^2 + \|u\|_{L^2}^2 \|\nabla u\|_{L^2}^2 \|t^{\frac12}\omega\|_{L^2}^4\Bigr).
\end{split}
\end{equation*}
Using \eqref{est6ggg}, we infer
\begin{equation*}\label{est6iii}
\begin{split}
\frac{d}{dt}\bigl(t^2 E_1(t)\bigr) &+  \mathfrak{e}_1\|t\,\nabla D_t u\|_{L^2}^2 \leq 3\|t^{\frac12}\sqrt{\rho}D_t u\|_{L^2}^2 + 6 \|t^{\frac12}\omega\|_{L^2}^2 \|\omega\|_{L^2}^2 \\
&\quad + C\Bigl(\|\nabla u\|_{L^2}^2 t^2 E_1(t)
+ \|t^{\frac12}\omega\|_{L^2}^2 \|t^{\frac12} u_t\|_{L^2}^2 + \|u\|_{L^2}^2 \|\nabla u\|_{L^2}^2 \|t^{\frac12}\omega\|_{L^2}^4\Bigr).
\end{split}
\end{equation*}
Applying Gronwall's inequality yields
\begin{equation*}\label{est6jjj-prelim}
\begin{split}
\sup_{t\in[0,T]}&\bigl(t^2 E_1(t)\bigr) +  \mathfrak{e}_1\|t\,\nabla D_t u\|_{L^2_T(L^2)}^2
\leq C \exp\!\bigl(C\|\nabla u\|_{L^2_T(L^2)}^2\bigr)\times \\
&\Bigl(
\|t^{\frac12}\sqrt{\rho}D_t u\|_{L^2_T(L^2)}^2 + \|t^{\frac12}\omega\|_{L^{\infty}_T(L^2)}^2 \|\omega\|_{L^2_T(L^2)}^2 \\
&\quad+ \|t^{\frac12}\omega\|_{L^{\infty}_T(L^2)}^2 \|t^{\frac12} u_t\|_{L^2_T(L^2)}^2 + \|u\|_{L^{\infty}_T(L^2)}^2 \|\nabla u\|_{L^2_T(L^2)}^2 \|t^{\frac12}\omega\|_{L^{\infty}_T(L^2)}^4\Bigr).
\end{split}
\end{equation*}
Together with \eqref{momentum-ains-18}, this ensures
\begin{equation}\label{est6jjj}
\sup_{t\in[0,T]}\bigl(t^2 E_1(t)\bigr)
+ \|t\,\nabla D_t u\|_{L^2_T(L^2)}^2
\leq C\|u_0\|_{L^2}^2 \exp\!\bigl(C(1+\|u_0\|_{L^2}^2)\|u_0\|_{L^2}^2\bigr).
\end{equation}
Finally, from \eqref{est6} and \eqref{est6ggg} we deduce
\begin{equation}\label{est6kkk}
\|t(\Delta u, \nabla\Pi)\|_{L^{\infty}_T(L^2)}^2
\leq C\|u_0\|_{L^2}^2 \exp\bigl(C\|u_0\|_{L^2}^4\bigr).
\end{equation}

\smallskip
	\noindent\textbf{$\bullet$ The estimate of $\|t^{\frac{1}{2}}\,\nabla{u}\|_{L^2_T(L^\infty)}$}
	\smallskip

We first get, by
applying the operator $\nabla^{\perp}\cdot$ to the momentum equations in \eqref{eqns-ains-1},  that
\begin{equation*}\label{eqns-momentum-ains-2aabb}
\begin{split}
&\nabla^{\perp}\cdot(\rho\,D_tu)-\bigl(\partial_1^3 u^2-\partial_2^3 u^1\bigr)= 0.
	\end{split}
\end{equation*}
It follows from \eqref{momentum-ains-2aaa} that
\begin{equation*}\label{momentum-ains-2bbb}
\begin{split}
&(\partial_1^4+\partial_2^4)\Delta^{-1}\omega=\nabla^{\perp}\cdot(\rho\,D_tu),
	\end{split}
\end{equation*}
from which, we infer
\begin{equation*}\label{momentum-ains-2ccc}
\begin{split}
 \|\nabla^2u\|_{L^4} &\lesssim \|(\partial_1^4+\partial_2^4)^{\frac{3}{4}}(-\Delta)^{-1}\omega\|_{L^4}\lesssim \|(\partial_1^4+\partial_2^4)^{-\frac{1}{4}}\nabla^{\perp}\cdot(\rho\,D_tu)\|_{L^4}  \\
&\lesssim \|\rho\,D_tu\|_{L^4} \lesssim\|D_tu\|_{L^4} \lesssim\|D_tu\|_{L^2}^{\frac{1}{2}} \|\nabla\,D_tu\|_{L^2}^{\frac{1}{2}}.
	\end{split}
\end{equation*}
As a result, we achieve
\begin{equation*}\label{momentum-ains-2ddd}
\begin{split}
&\|\nabla{u}\|_{L^{\infty}} \lesssim \|\nabla{u}\|_{L^4}^{\frac{1}{2}} \|\nabla^2u\|_{L^4}^{\frac{1}{2}}\lesssim \|\nabla{u}\|_{L^2}^{\frac{1}{4}} \|\nabla^2{u}\|_{L^2}^{\frac{1}{4}}  \|D_tu\|_{L^2}^{\frac{1}{4}} \|\nabla\,D_tu\|_{L^2}^{\frac{1}{4}},
	\end{split}
\end{equation*}
we thus deduce from  \eqref{momentum-ains-18}, \eqref{est6jjj} and \eqref{est6kkk} that \begin{equation}\label{momentum-ains-2eee}
\begin{split}
\|t^{\frac{1}{2}}\,\nabla{u}\|_{L^2_T(L^\infty)}  &\lesssim \|\nabla{u}\|_{L^2_T(L^2)}^{\frac{1}{4}} \|t^{\frac{1}{2}}\nabla^2{u}\|_{L^2_T(L^2)}^{\frac{1}{4}} \|t^{\frac{1}{2}}D_tu\|_{L^2_T(L^2)}^{\frac{1}{4}} \|t\nabla\,D_tu\|_{L^2_T(L^2)}^{\frac{1}{4}}\\
&\leq C\|u_0\|_{ L^2}^2\exp\bigl(C\|u_0\|_{ L^2}^4\bigr).
	\end{split}
\end{equation}

By summarizing the  estimates  \eqref{est6jjj}, \eqref{est6kkk}  and \eqref{momentum-ains-2eee}, we conclude the proof of \eqref{est-noj-nabla2ub-sum-3}.
This ends the proof of Proposition \ref{lem-H-2}.
\end{proof}

\subsection{The proof of Theorem \ref{thm-GUS-NS} }

Let's turn to  the proof of  Theorem \ref{thm-GUS-NS}.

\begin{proof}[Proof of Theorem \ref{thm-GUS-NS}]
With the {\it a priori} estimates established in Propositions \ref{prop-L-2-u-1}--\ref{lem-H-2}, it is standard to apply the compactness argument from \cite{LP1996} to construct a global weak solution $(\rho, u, \nabla\Pi)$ to system \eqref{eqns-ains-1} satisfying $\rho \in C_{\rm w}([0,\infty[; L^{\infty})$, $u \in C([0, +\infty[; L^2) \cap L^2(\mathbb{R}^+; \dot{H}^{1})$, and the estimate \eqref{est-variable-2}. It remains to prove the uniqueness of such a global solution.

Let $(\rho, u, \nabla\Pi)$ and $(\bar{\rho}, \bar{u}, \nabla\bar{\Pi})$ be two such global solutions to system \eqref{eqns-ains-1} as constructed in Theorem \ref{thm-GUS-NS}. Define
\begin{equation*}
(\delta\rho, \delta u, \nabla\delta\Pi) \eqdefa (\rho - \bar{\rho}, u - \bar{u}, \nabla\Pi - \nabla\bar{\Pi}).
\end{equation*}
Then, from \eqref{eqns-ains-1}, the system for $(\delta\rho, \delta u, \nabla\delta\Pi)$ reads
\begin{equation}\label{delta-eqns-nonlip-1}
\begin{cases}
\partial_t\delta\rho + \bar{u} \cdot \nabla\delta\rho = -\delta u \cdot \nabla\rho, \qquad (t,x)\in\R^+\times\R^2,\\[2mm]
\rho(\partial_t\delta u + u \cdot \nabla\delta u) - (\partial_2^2\delta u^1, \, \partial_1^2\delta u^2)^{T} + \nabla\delta\Pi
= -\delta\rho \, \bar{D}_t\bar{u} - \rho \delta u \cdot \nabla\bar{u}, \\[2mm]
\na\cdot \delta u = 0, \\[2mm]
(\delta\rho, \delta u)|_{t=0} = (0, 0),
\end{cases}
\end{equation}
where $\bar{D}_t \eqdefa \partial_t + \bar{u} \cdot \nabla$.

\smallskip
\noindent\textbf{$\bullet$ The energy estimate of $\delta u$} \smallskip

Taking the $L^2$ inner product of the $\delta u$ equation in \eqref{delta-eqns-nonlip-1} with $\delta u$ yields
\begin{equation*}\label{delta-nonlip-2}
\begin{split}
\frac{1}{2}&\frac{d}{dt}\|\rho^{\frac12}\delta u(t)\|_{L^2}^2 + \|(\partial_2\delta u^1, \partial_1\delta u^2)\|_{L^2}^2 = \sum_{j=1}^2 II_j(t), \quad \text{with} \\
&II_1(t) \eqdefa -\int_{\mathbb{R}^2} \delta\rho \, \bar{D}_t\bar{u} \cdot \delta u(t) \,dx, \quad
II_2(t) \eqdefa -\int_{\mathbb{R}^2} \rho (\delta u \cdot \nabla)\bar{u} \cdot \delta u(t) \,dx.
\end{split}
\end{equation*}
Integrating this inequality over $[0,T]$ gives
\begin{equation}\label{delta-nonlip-500}
\begin{split}
{\rm{E}}(T) \eqdefa \|\delta u\|_{L^{\infty}_T(L^2)}^2 + \|\nabla\delta u\|_{L^2_T(L^2)}^2 \lesssim \|II_1\|_{L^1_T} + \|II_2\|_{L^1_T}.
\end{split}
\end{equation}

First, observe that
\begin{equation*}
\begin{split}
|I_2| &\lesssim \|\delta u\|_{L^4}^2 \|\nabla\bar{u}\|_{L^2} \lesssim \|\delta u\|_{L^2} \|\nabla\delta u\|_{L^2} \|\nabla\bar{u}\|_{L^2},
\end{split}
\end{equation*}
from which we infer
\begin{equation}\label{est-I-2-4-102}
\begin{split}
\|I_2\|_{L^1_T}
&\lesssim \|\nabla\bar{u}\|_{L^{2}_T(L^2)} \|\delta u\|_{L^{\infty}_T(L^2)} \|\nabla\delta u\|_{L^{2}_T(L^2)} \\
&\leq L(T) \, {\rm{E}}(T).
\end{split}
\end{equation}
Here and hereafter, $L(T)$ denotes a positive function (possibly varying from line to line) such that $L(T) \to 0$ as $T \to 0^+$.

\smallskip
\noindent\textbf{$\bullet$ The Estimate of $\|II_1\|_{L^1_T}$} \smallskip

Following the approach in \cite{HSWZ2024}, we handle the estimate of $II_1$ via a duality argument.
Let $\psi(s, t; x)$ solve the transport equation
\begin{equation}\label{est-I1-1}
\begin{cases}
\partial_s\psi(s, t; x) + \bar{u}(s, x) \cdot \nabla\psi(s, t; x) = 0 \quad \text{for } s \in [0,t], \\[2mm]
\psi(s, t; x)|_{s=t} = [\bar{D}_t\bar{u} \cdot \delta u](t, x).
\end{cases}
\end{equation}
By Lemma \ref{lem-est-transport-1} in Appendix \ref{sect2}, for any $t \geq 0$,
\begin{equation}\label{est-I1-2}
 \|\nabla\psi(s, t)\|_{L^{\frac43}} \leq C \|\nabla(\bar{D}_t\bar{u} \cdot \delta u)(t)\|_{L^{\frac43}} \, e^{C|\ln(t/s)|^{\frac12}}.
\end{equation}

Using the first equation in \eqref{delta-eqns-nonlip-1} and \eqref{est-I1-1}, we compute
\begin{equation*}
\begin{split}
- \frac{d}{ds} \int_{\mathbb{R}^2} \delta\rho(s) \, \psi(s, t) \,dx
&= - \int_{\mathbb{R}^2} \bigl(\psi(s, t) \, \partial_s\delta\rho(s) + \delta\rho(s) \, \partial_s\psi(s, t)\bigr) \,dx \\
&= \int_{\R^2} \psi(s, t) \, \delta u(s) \cdot \nabla\rho(s) \,dx \\
&= - \int_{\mathbb{R}^2} \rho(s) \, \delta u(s) \cdot \nabla\psi(s, t) \,dx.
\end{split}
\end{equation*}
Integrating over $[0,t]$ and using \eqref{est-I1-2} yields
\begin{equation*}
\begin{split}
|II_1(t)| &\leq \int_0^t \|\nabla\psi(s, t)\|_{L^{\frac43}} \, \|\delta u(s)\|_{L^4} \,ds \\
&\leq C \|\nabla(\bar{D}_t\bar{u} \cdot \delta u)(t)\|_{L^{\frac43}} \int_0^t e^{C|\ln(t/s)|^{\frac12}} \, \|\delta u(s)\|_{L^4} \,ds \\
&\leq C \|t\nabla(\bar{D}_t\bar{u} \cdot \delta u)(t)\|_{L^{\frac43}} \, F(t)
\end{split}
\end{equation*}
with $F(t) \eqdefa \frac{1}{t} \int_0^t e^{C|\ln(t/s)|^{\frac12}} \, \|\delta u(s)\|_{L^4} \,ds$.
Consequently,
\begin{equation}\label{est-I1-4}
\|II_1\|_{L^1_T} \leq C \|t\nabla(\bar{D}_t\bar{u} \cdot \delta u)\|_{L^{\frac43}_T(L^{\frac43})} \, \|F\|_{L^4_T}.
\end{equation}

From Lemma \ref{lem-est-transport-2} in the Appendix \ref{sect2}, we have
\begin{equation}\label{est-I1-5aaa}
\begin{split}
\|F\|_{L^4_T} &\leq C \bigl\|\|\delta u(t)\|_{L^4}\bigr\|_{L^4_T} \leq C \|\delta u\|_{L^{\infty}_T(L^2)}^{\frac12} \|\nabla\delta u\|_{L^2_T(L^2)}^{\frac12} \leq C {\rm{E}}^{\frac12}(T).
\end{split}
\end{equation}
Meanwhile,
\begin{equation*}
\begin{split}
\|t\nabla(\bar{D}_t\bar{u} \cdot \delta u)\|_{L^{\frac43}_T(L^{\frac43})}
&\leq \|t(\nabla \bar{D}_t\bar{u}) \delta u\|_{L^{\frac43}_T(L^{\frac43})} + \|t \bar{D}_t\bar{u} \, (\nabla \delta u)\|_{L^{\frac43}_T(L^{\frac43})} \\
&\leq \|t \nabla \bar{D}_t\bar{u}\|_{L^2_T(L^2)} \|\delta u\|_{L^4_T(L^4)} + \|t \bar{D}_t\bar{u}\|_{L^4_T(L^4)} \|\nabla \delta u\|_{L^2_T(L^2)}.
\end{split}
\end{equation*}
Together with \eqref{est-noj-nabla2ub-sum-3}, this implies
\begin{equation}\label{est-I1-5bbb}
\begin{split}
\|t\nabla(\bar{D}_t\bar{u} \cdot \delta u)\|_{L^{\frac43}_T(L^{\frac43})}
&\leq C\Bigl(\|t \nabla \bar{D}_t\bar{u}\|_{L^2_T(L^2)} \|\delta u\|_{L^{\infty}_T(L^2)}^{\frac12} \|\nabla\delta u\|_{L^2_T(L^2)}^{\frac12} \\
&\quad + \|t \bar{D}_t\bar{u}\|_{L^{\infty}_T(L^2)}^{\frac12} \|t \nabla\bar{D}_t\bar{u}\|_{L^2_T(L^2)}^{\frac12} \|\nabla \delta u\|_{L^2_T(L^2)}\Bigr) \leq L(T) {\rm{E}}^{\frac12}(T).
\end{split}
\end{equation}
Substituting \eqref{est-I1-5aaa} and \eqref{est-I1-5bbb} into \eqref{est-I1-4} gives
\begin{equation}\label{est-I1-6}
\|II_1\|_{L^1_T} \leq L(T) {\rm{E}}(T).
\end{equation}

Inserting the estimates \eqref{est-I-2-4-102} and \eqref{est-I1-6} into \eqref{delta-nonlip-500}, we obtain
\begin{equation*}
{\rm{E}}(T) \leq L(T) {\rm{E}}(T) \quad \text{for all } T > 0.
\end{equation*}
Choosing $T$ sufficiently small so that $L(T) \leq \frac12$, we have
\begin{equation*}
{\rm{E}}(T) \leq \frac12 {\rm{E}}(T),
\end{equation*}
which implies ${\rm{E}}(T) = 0$. Therefore, $\delta u(t) = \nabla \delta\Pi(t) \equiv 0$ and consequently $\delta\rho(t) = 0$ for all $t \in [0, T]$. A bootstrap argument then yields the uniqueness part of Theorem \ref{thm-GUS-NS}. This completes the proof of Theorem \ref{thm-GUS-NS}.
\end{proof}

\medskip
\renewcommand{\theequation}{\thesection.\arabic{equation}}
\setcounter{equation}{0}
\section{Global well-posedness of (HINS)}\label{sect-4-periodic}

This section is devoted to the global well-posedness of the two-dimensional inhomogeneous incompressible anisotropic Navier-Stokes equations \eqref{eqns-HINS-1} on $\Om=\mathbb{T}\times \mathbb{R}$.
We first observe from \eqref{average-u-1-a} and \eqref{average-u-1-b} that

\begin{lem}\label{lem-prop-decomp-1}
{\sl Let $\bar{f}$ and $f_{\neq}$ be defined as in \eqref{average-u-1-a} and \eqref{average-u-1-b}. Then the following properties hold:
\begin{equation}\label{average-u-2-a}
\begin{split}
&\bar{\bar{f}} = \bar{f}, \quad (f_{\neq})_{\neq} = f_{\neq}, \quad (\bar{f})_{\neq} = \overline{(f_{\neq})} = 0, \\
&\overline{\partial_1 f} = 0, \quad (\partial_1 f)_{\neq} = \partial_1 f_{\neq}, \quad \overline{\bar{f} \, g_{\neq}} = 0, \quad (\bar{f} \, g_{\neq})_{\neq} = \bar{f} \, g_{\neq}.
\end{split}
\end{equation}
If a vector field $F$ satisfies $\nabla \cdot F = 0$, then
\begin{equation}\label{average-u-2-b}
\nabla \cdot \overline{F} = 0, \quad \nabla \cdot F_{\neq} = 0.
\end{equation}
For any $f \in L^2(\mathbb{T}\times \mathbb{R})$,
\begin{equation}\label{average-u-2-c}
\begin{split}
&(\bar{f}, f_{\neq})_{L^2} = 0, \quad \|f\|_{L^2}^2 = \|\bar{f}\|_{L^2}^2 + \|f_{\neq}\|_{L^2}^2 \andf
\|f_{\neq}\|_{L^2} \lesssim \|\partial_1 f_{\neq}\|_{L^2},
\end{split}
\end{equation}
where $(\cdot, \cdot)_{L^2}$ denotes the $L^2(\Om)$ inner product.

For any two functions $f$ and $g$ on $\mathbb{T}\times \mathbb{R}$, the following decomposition holds:
\begin{equation}\label{decomp-bar-neq-1}
\begin{split}
&\overline{f g} = \bar{f} \,\bar{g} + \overline{f_{\neq} \, g_{\neq}} \andf (f g)_{\neq} = \bar{f} \, g_{\neq} + f_{\neq} \,\bar{g} + (f_{\neq} \, g_{\neq})_{\neq}.
\end{split}
\end{equation}}
\end{lem}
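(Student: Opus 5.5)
The whole statement reduces to one elementary fact: the horizontal average is integration in $x_1$ over a torus of length one. Concretely, $\bar{\cdot}$ is a linear projection on functions of $(x_1,x_2)$. It is the orthogonal projection in $L^2(\Omega)$ onto functions independent of $x_1$, and it coincides with taking the zeroth Fourier coefficient in $x_1$. I would establish this interpretation first and then read off each identity in turn.

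\textbf{Step 1: the algebraic identities \eqref{average-u-2-a} and \eqref{decomp-bar-neq-1}.}
\begin{itemize}
\item Since $\bar f$ does not depend on $x_1$ and $|\mathbb{T}|=1$, we get $\bar{\bar f}=\bar f$.
\item From this, $\overline{f_{\neq}}=\bar f-\bar{\bar f}=0$, $(\bar f)_{\neq}=0$ and $(f_{\neq})_{\neq}=f_{\neq}$.
\item Periodicity gives $\overline{\partial_1 f}=\int_{\mathbb{T}}\partial_1 f\,dx_1=0$. Hence $(\partial_1 f)_{\neq}=\partial_1 f=\partial_1 f_{\neq}$, the last equality because $\partial_1\bar f=0$.
\item Because $\bar f$ can be pulled out of the $x_1$-integral, $\overline{\bar f g_{\neq}}=\bar f\,\overline{g_{\neq}}=0$, and therefore $(\bar f g_{\neq})_{\neq}=\bar f g_{\neq}$.
\end{itemize}
For \eqref{decomp-bar-neq-1}, expand $fg=(\bar f+f_{\neq})(\bar g+g_{\neq})$ and apply the previous rules term by term. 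The average of $\bar f\bar g$ is $\bar f\bar g$, and the two cross terms have zero average, which gives $\overline{fg}$. Subtracting from $fg$ gives $(fg)_{\neq}=\bar f g_{\neq}+f_{\neq}\bar g+f_{\neq}g_{\neq}-\overline{f_{\neq}g_{\neq}}$, and the last two terms together are exactly $(f_{\neq}g_{\neq})_{\neq}$.

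\textbf{Step 2: \eqref{average-u-2-b}.} Averaging $\partial_1F^1+\partial_2F^2=0$ in $x_1$ gives $0+\partial_2\overline{F^2}=0$. Since $\partial_1\overline{F^1}=0$, this says $\nabla\cdot\bar F=0$. The identity $\nabla\cdot F_{\neq}=\nabla\cdot F-\nabla\cdot\bar F=0$ then follows.

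\textbf{Step 3: \eqref{average-u-2-c}.} By Fubini,
$$(\bar f,f_{\neq})_{L^2}=\int_{\mathbb{R}}\bar f(x_2)\,\overline{f_{\neq}}(x_2)\,dx_2=0.$$
The Pythagorean identity $\|f\|_{L^2}^2=\|\bar f\|_{L^2}^2+\|f_{\neq}\|_{L^2}^2$ follows immediately. For the Poincaré inequality, fix $x_2$ and expand $f_{\neq}(\cdot,x_2)=\sum_{k\neq0}\hat f_k(x_2)e^{2\pi i kx_1}$. Parseval then gives
$$\|\partial_1 f_{\neq}(\cdot,x_2)\|_{L^2(\mathbb{T})}^2=\sum_{k\ne0}4\pi^2k^2|\hat f_k|^2\ge4\pi^2\|f_{\neq}(\cdot,x_2)\|_{L^2(\mathbb{T})}^2.$$
Integrating in $x_2$ yields $\|f_{\neq}\|_{L^2}\le(2\pi)^{-1}\|\partial_1f_{\neq}\|_{L^2}$.

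\textbf{Regularity caveat.} All of the above is routine. The only care needed is justifying the manipulations when $f$ is merely integrable in $x_1$ or lies in $L^2$, where derivatives are understood distributionally. I would handle this by proving each identity for smooth compactly supported $f$ (in $x_2$) and extending by density, since every operation involved is continuous in $L^2$.
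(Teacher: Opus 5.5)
Your proof is correct and is the direct verification from the definitions \eqref{average-u-1-a}--\eqref{average-u-1-b} that the paper leaves to the reader: it states the lemma as an immediate observation with no written proof, and your Fourier/Parseval argument gives the Poincar\'e inequality with constant $(2\pi)^{-1}$. One small point is that the density argument in your regularity caveat is unnecessary for the product identities \eqref{decomp-bar-neq-1}, which hold pointwise a.e.\ by Fubini whenever the averages involved exist; also, multiplication is not continuous into $L^2$, only from $L^2\times L^2$ into $L^1$.
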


\begin{rmk}\label{rmk-1}
Due to the incompressibility condition $\dv u = 0$, there exists a stream function $\phi$ such that
\begin{equation*}\label{stream-func-1}
u = \nabla^{\perp} \phi =\begin{pmatrix}-\partial_2 \phi \\ \partial_1\phi\end{pmatrix},  \end{equation*}
which together with \eqref{average-u-2-a} implies $\bar{u}^2 \equiv 0$, and hence $u^2 = u_{\neq}^2$.
\end{rmk}

Given initial data $(\r_0,u_0),$ with $\r_0$ satisfying   \eqref{t.1},  $\nabla\rho_0 \in L^{\infty} \cap \dot{H}^1(\Omega)$, and  $u_0 \in H^2(\Omega),$ we deduce by a standard argument that the system \eqref{eqns-HINS-1}  has a unique local strong solution $(\r,u)$ on $[0,T^\ast[.$ Below we shall prove that $T^\ast=\infty$ provided that there holds \eqref{assumption-initial-103}.

\subsection{Estimate of $\|(u_{\neq}, \partial_1 u_{\neq})\|_{L^{\infty}_T(L^2)}$}


\begin{prop}[Estimate of $\|(u_{\neq}, \partial_1 u_{\neq})\|_{L^2}$]\label{prop-HINS-H1-neq-1}
 {\sl Let $\rho_0$ satisfy \eqref{t.1} and let $u_0 \in H^2(\Om)$ be a solenoidal vector field.
Let $(\rho, u)$ be a sufficiently smooth solution of \eqref{eqns-HINS-1} on $[0, T^{\ast}[$. Then for any $T \in [0, T^{\ast}[$, \eqref{bdd-density-visc-1} holds on $\Om,$ and there exists  positive constants $\delta_0$ and $c_1$ such that if
\begin{equation}\label{condition-smallness-sol-1}
\sup_{t \in [0, T^{\ast}[} \bigl(\|\rho_{\neq}(t)\|_{L^{\infty}} + \|(u, \omega)(t)\|_{L^2}\bigr) \leq \delta_0,
\end{equation}
then
\begin{equation}\label{est-Hins-expo-1}
\begin{split}
&\sup_{t\in[0,T]} \bigl(e^{7c_1 t}\|(u_{\neq}, \partial_1 u_{\neq})(t)\|_{L^2}^2\bigr)\\
&\quad+ \int_0^{T} e^{6c_1 t} \|(\partial_1 u_{\neq}, \partial_1^2 u_{\neq}, \partial_t u_{\neq}, \nabla \Pi_{\neq})(t)\|_{L^2}^2 \,dt \lesssim \|(u_{0,\neq}, \partial_1 u_{0,\neq})\|_{L^2}^2.
\end{split}
\end{equation}}
\end{prop}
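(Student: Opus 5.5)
We apply $\mathcal{P}_{\neq}$ to the momentum equation and run two energy estimates for $u_{\neq}$: an $L^2$ estimate and an $\dot H^1_{x_1}$ (i.e.\ $\partial_1 u_{\neq}$) estimate. The Poincaré inequality \eqref{average-u-2-c} then converts dissipation into exponential damping. By Lemma \ref{lem-prop-decomp-1} and Remark \ref{rmk-1}, writing $\rho=\bar\rho+\rho_{\neq}$, $u=\bar u+u_{\neq}$ with $\bar u=(\bar u^1(x_2),0)$, the oscillatory part satisfies
\begin{equation*}
\bar\rho\,(\partial_t u_{\neq}+\bar u^1\partial_1 u_{\neq}+u^2_{\neq}\partial_2\bar u)-\partial_1^2u_{\neq}+\nabla\Pi_{\neq}=-\bigl(\rho_{\neq}D_tu\bigr)_{\neq}-\bigl(\bar\rho\,u_{\neq}\cdot\nabla u_{\neq}\bigr)_{\neq},
\end{equation*}
with $\dv u_{\neq}=0$. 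Since $\bar\rho$ depends only on $x_2$ and $\bar u^1\partial_1$ is antisymmetric, testing with $u_{\neq}$ gives
\begin{equation*}
\tfrac12\tfrac{d}{dt}\|\sqrt{\bar\rho}u_{\neq}\|_{L^2}^2+\|\partial_1u_{\neq}\|_{L^2}^2 .
\end{equation*}
Here a transport term $\frac12\int\partial_t\bar\rho|u_{\neq}|^2$ appears. It is handled by $\partial_t\bar\rho=-\partial_2\overline{u^2_{\neq}\rho_{\neq}}$, integrating by parts and bounding by $\|\rho_{\neq}\|_{L^\infty}$ times dissipative quantities.

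The right side has three remaining terms.
\begin{itemize}
\item $\int u^2_{\neq}\partial_2\bar u^1 u^1_{\neq}$ is bounded by $\|\omega\|_{L^2}\|u_{\neq}\|_{L^4}^2$. Anisotropic Gagliardo–Nirenberg on $\mathbb T\times\R$ for mean-zero-in-$x_1$ functions gives $\|u_{\neq}\|_{L^4}^2\lesssim\|\partial_1u_{\neq}\|_{L^2}\|\nabla u_{\neq}\|_{L^2}$, roughly. To close without $\nabla u_{\neq}$ growth, I would instead use $\|f\|_{L^\infty_{x_1}}\lesssim\|\partial_1f\|_{L^2_{x_1}}$ and $\|g\|_{L^\infty_{x_2}}\lesssim\|g\|^{1/2}\|\partial_2 g\|^{1/2}$. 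This bounds the term by $\delta_0\|\partial_1u_{\neq}\|_{L^2}^2$ up to terms involving $\|\nabla u\|\le\|\omega\|\le\delta_0$.
\item The cubic oscillation term is bounded the same way.
\item The density term is bounded by $\|\rho_{\neq}\|_{L^\infty}\|D_tu\|_{L^2}\|u_{\neq}\|_{L^2}$.
\end{itemize}

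\textbf{Main obstacle.} The term $\rho_{\neq}D_tu$ involves the full $\partial_t u$, including its mean. It is not obviously integrable in time with exponential weight unless we exploit that its projection is paired with $u_{\neq}$: $\bigl((\rho_{\neq}\partial_t\bar u)_{\neq}|u_{\neq}\bigr)=\int\rho_{\neq}\partial_t\bar u\cdot u_{\neq}$. Here $\partial_t\bar u$ solves the averaged equation $\bar\rho\partial_t\bar u^1=-\overline{(\rho D_t u)^1_{\neq}}$-type relations, since $\bar\Pi$ drops out and $\overline{\partial_1^2u}=0$. So $\partial_t\bar u$ is itself quadratic in oscillations, and the whole term is bounded by $\delta_0(\|\partial_1u_{\neq}\|^2+\|\partial_tu_{\neq}\|^2)$.

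This is why we pair the $L^2$ estimate with a second one: test the $u_{\neq}$ equation with $\partial_tu_{\neq}$. This yields $\frac{d}{dt}\|\partial_1u_{\neq}\|^2+\|\sqrt{\bar\rho}\partial_tu_{\neq}\|^2$, with $\nabla\Pi_{\neq}$ vanishing against the divergence-free $\partial_tu_{\neq}$ and errors of the same structure. Also, $\|\partial_1^2u_{\neq}\|+\|\nabla\Pi_{\neq}\|\lesssim\|\partial_tu_{\neq}\|+$ nonlinear terms, by treating $-\partial_1^2u_{\neq}+\nabla\Pi_{\neq}=G$ as a Stokes-type problem in which $\nabla\Pi_{\neq}$ is orthogonal to $\partial_1^2u_{\neq}$, as in Proposition \ref{lem-H-1}.

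Summing the two estimates gives, for $\delta_0$ small,
\begin{equation*}
\frac{d}{dt}\mathcal E+\mathcal D\le0,\qquad \mathcal E\sim\|(u_{\neq},\partial_1u_{\neq})\|_{L^2}^2,\qquad \mathcal D\gtrsim\|(\partial_1u_{\neq},\partial_tu_{\neq},\partial_1^2u_{\neq},\nabla\Pi_{\neq})\|_{L^2}^2 .
\end{equation*}
Poincaré ($\|u_{\neq}\|\lesssim\|\partial_1u_{\neq}\|$, $\|\partial_1u_{\neq}\|\lesssim\|\partial_1^2u_{\neq}\|$) yields $\mathcal D\ge 8c_1\mathcal E$ for some $c_1>0$. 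Multiplying by $e^{7c_1t}$ and integrating gives both the sup bound and the $e^{6c_1t}$-weighted integral in \eqref{est-Hins-expo-1}. The main difficulty is the careful bookkeeping of the $\rho_{\neq}\partial_t\bar u$ and $u^2_{\neq}\partial_2\bar u$ couplings so that only $\delta_0$ from \eqref{condition-smallness-sol-1} multiplies dissipative quantities.
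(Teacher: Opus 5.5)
Your proposal is correct and follows essentially the same route as the paper. You project with $\mathcal{P}_{\neq}$, test with $u_{\neq}$ and with $\partial_t u_{\neq}$, treat $\partial_t\bar\rho$ through the averaged continuity equation and $\partial_t\bar u^1$ through the averaged first momentum component, recover $\|(\partial_1^2u_{\neq},\nabla\Pi_{\neq})\|_{L^2}$ from the equation by orthogonality, and close with Poincar\'e and the smallness $\delta_0$.

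The one point to keep explicit is where $L^\infty_{x_2}$ lands in the anisotropic bounds. It must fall on $u^2_{\neq}$, using $\partial_2u^2_{\neq}=-\partial_1u^1_{\neq}$ and $\|u^2_{\neq}\|_{L^\infty}\lesssim\|\partial_1u^2_{\neq}\|_{L^2}^{1/2}\|\partial_1^2u^1_{\neq}\|_{L^2}^{1/2}$, as in the paper's treatment of $III_2$ and \eqref{est-I-L2-3}. It must not be replaced by a bound through $\|\nabla u\|_{L^2}$. Otherwise some errors come out as $\delta_0$ times a sub-quadratic power of the dissipation, and you lose the exponential decay.
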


\begin{proof}
Taking the average of the $u^1$ equation in \eqref{eqns-HINS-1}  on $\mathbb{T}$ and using $\bar{u}^2 = 0$, we obtain
\begin{equation}\label{eqns-HINS-aver-8}
\begin{split}
\overline{\rho} \,\partial_t\bar{u}^1 &+ \overline{\rho} \,\partial_2 (\overline{u_{\neq}^2 u_{\neq}^1})
+ \overline{\rho_{\neq} \,\partial_t u_{\neq}^1} + \overline{\rho_{\neq} \,\partial_1(u^1 u^1)_{\neq}}
+ \overline{\rho_{\neq} \,\partial_2(u^2 u^1)_{\neq}} = 0.
\end{split}
\end{equation}
Applying the operator $\mathcal{P}_{\neq}$ to the momentum equations in \eqref{eqns-HINS-1} yields
\begin{equation}\label{eqns-HINS-aver-9}
\begin{cases}
\bar{\rho} \,\partial_t u_{\neq}^1 - \partial_1^2 u_{\neq}^1 + \partial_1 \Pi_{\neq}
+ \bar{\rho} \,\bar{u}^1 \partial_1 u_{\neq}^1 + \bar{\rho} \, u_{\neq}^2 \partial_2 \bar{u}^1 = F_1, \\[2mm]
\bar{\rho} \,\partial_t u_{\neq}^2 - \partial_1^2 u_{\neq}^2 + \partial_2 \Pi_{\neq}
+ \bar{\rho} \,\bar{u}^1 \partial_1 u_{\neq}^2 = F_2, \\[2mm]
\nabla \cdot u_{\neq} = 0,
\end{cases}
\end{equation}
with
\begin{equation}\label{eqns-HINS-aver-10}
\begin{split}
F_1 \eqdefa &-\Bigl( \rho_{\neq} \,\partial_t \bar{u}^1 + (\rho_{\neq} \,\partial_t u_{\neq}^1)_{\neq}
+ \bar{\rho} \,\partial_1[(u_{\neq}^1 u_{\neq}^1)_{\neq}] + \bar{\rho} \,\partial_2 [(u_{\neq}^2 u_{\neq}^1]_{\neq})
+ \rho_{\neq} \,\partial_2 [\overline{u_{\neq}^2 u_{\neq}^1}] \\
&\qquad+ 2\bigl\{\rho_{\neq} \bigl[ \bar{u}^1 \partial_1 u_{\neq}^1 + (u_{\neq}^1 \partial_1 u_{\neq}^1)_{\neq}\bigr]\bigr\}_{\neq} + \bigl\{\rho_{\neq} \,\partial_2 \bigl[u_{\neq}^2 \bar{u}^1 + (u_{\neq}^2 u_{\neq}^1)_{\neq}\bigr]\bigr\}_{\neq} \Bigr), \\[2mm]
F_2 \eqdefa  &-\Bigl( \rho_{\neq} \,\partial_t u_{\neq}^2 + \bar{\rho} \,\partial_1(u_{\neq}^2 u_{\neq}^1)
+ \bar{\rho} \,\partial_2 (u_{\neq}^2 u_{\neq}^2) + \rho_{\neq} \,\partial_1 (u_{\neq}^1 u_{\neq}^2) \\
&\qquad+ \rho_{\neq} \,\partial_2 (u_{\neq}^2 u_{\neq}^2)
{{+ \rho_{\neq} \partial_1(\bar{u}^1 u_{\neq}^2)}} \Bigr).
\end{split}
\end{equation}

\noindent\textbf{$\bullet$ The energy estimate of $u_{\neq}$} \smallskip

Taking the $L^2$ inner product of \eqref{eqns-HINS-aver-9} with $u_{\neq}$, integrating by parts and using $\dv u_{\neq} = 0$, we obtain
\begin{equation*}\label{est-Hins-L2-1}
\begin{split}
\frac{1}{2}\frac{d}{dt}\|\bar{\rho}^{\frac12} u_{\neq}(t)\|_{L^2}^2 + \|\partial_1 u_{\neq}\|_{L^2}^2
= &\frac{1}{2}\int_{\Om} |u_{\neq}|^2 \partial_t\bar{\rho} \,dx
   - \int_{\Om} \bar{\rho} \,\bar{u}^1 u_{\neq}^1 \partial_1 u_{\neq}^1 \,dx \\
& - \int_{\Om} \bar{\rho} \,u_{\neq}^1 u_{\neq}^2 \partial_2 \bar{u}^1 \,dx
   - \int_{\Om} \bar{\rho} \,\bar{u}^1 u_{\neq}^2 \partial_1 u_{\neq}^2 \,dx \\
& + \int_{\Om} (F_1 u_{\neq}^1 + F_2 u_{\neq}^2) \,dx,
\end{split}
\end{equation*}
from which  and the facts:
\begin{align*}
\int_{\Om} \bar{\rho} \,\bar{u}^1 \partial_1 u_{\neq}^1 \,u_{\neq}^1 \,dx
&= -\frac{1}{2}\int_{\Om} \partial_1 (\bar{\rho} \,\bar{u}^1) |u_{\neq}^1|^2 \,dx = 0, \andf
\int_{\Om} \bar{\rho} \,\bar{u}^1 u_{\neq}^2 \partial_1 u_{\neq}^2 \,dx = 0,
\end{align*}
we deduce that
\begin{equation}\label{est-Hins-L2-2}
\begin{split}
&\frac{1}{2}\frac{d}{dt}\|\bar{\rho}^{\frac12} u_{\neq}(t)\|_{L^2}^2 + \|\partial_1 u_{\neq}\|_{L^2}^2 = \sum_{j=1}^3 III_j \with\\
&III_1 \eqdefa \frac{1}{2}\int_{\Om} |u_{\neq}|^2 \partial_t\bar{\rho} \,dx,
\quad
III_2 \eqdefa -\int_{\Om} \bar{\rho} \,u_{\neq}^2 u_{\neq}^1 \partial_2 \bar{u}^1 \,dx, \andf\\
&III_3 \eqdefa \int_{\Om} (F_1 u_{\neq}^1 + F_2 u_{\neq}^2) \,dx.
\end{split}
\end{equation}

Taking the average of the density equation in \eqref{eqns-HINS-1} and using $\bar{u}^2 = 0$ yields
\begin{equation}\label{eqns-HINS-aver-rho-1}
\partial_t\bar{\rho} + \partial_2(\overline{u_{\neq}^2 \rho_{\neq}}) = 0.
\end{equation}
In view of \eqref{eqns-HINS-aver-rho-1}, we get, by using integration by parts, that
\begin{equation*}\label{est-I-L2-1}
\begin{split}
III_1
&=\int_{\Omega}  \overline{u_{\neq}^2 \rho_{\neq}} \bigl(u_{\neq}^1\partial_2 u_{\neq}^1+u_{\neq}^2\partial_2 u_{\neq}^2\bigr) \,dx=\int_{\Omega}({u}_{\neq}^1\partial_2{u}_{\neq}^1-{u}_{\neq}^2\partial_1{u}_{\neq}^1)\overline{u_{\neq}^2\rho_{\neq}} \,dx,
\end{split}
\end{equation*}
from which, we infer
\begin{equation*}\label{est-I-L2-2}
\begin{split}
 |III_1|&\lesssim \bigl(\|{u}_{\neq}^1\|_{L^2}\|\partial_2{u}_{\neq}^1\|_{L^2}+\|{u}_{\neq}^2\|_{L^2}\|\partial_1{u}_{\neq}^1\|_{L^2}\bigr)
 \|u_{\neq}^2\|_{L^{\infty}} \|\rho_{\neq}\|_{L^{\infty}}.
\end{split}
\end{equation*}
Note that
\begin{equation}\label{est-I-L2-3}
\begin{split}
\|u_{\neq}^2\|_{L^{\infty}} &\lesssim \|\partial_1 u_{\neq}^2\|_{L^2}^{\frac12} \|\partial_2\partial_1 u_{\neq}^2\|_{L^2}^{\frac12}
      \lesssim \|\partial_1 u_{\neq}^2\|_{L^2}^{\frac12} \|\partial_1^2 u_{\neq}^1\|_{L^2}^{\frac12}, \\
\|u_{\neq}^1\|_{L^{\infty}} &\lesssim \|\partial_1 u_{\neq}^1\|_{L^2}^{\frac12} \|\partial_2\partial_1 u_{\neq}^1\|_{L^2}^{\frac12}, \\
\|\bar{u}^1\|_{L^{\infty}} &\lesssim \|\bar{u}^1\|_{L^2}^{\frac12} \|\partial_2\bar{u}^1\|_{L^2}^{\frac12}.
\end{split}
\end{equation}
Using \eqref{est-I-L2-3} and the fact $\|{u}_{\neq}\|_{L^2}\lesssim \|\partial_1{u}_{\neq}\|_{L^2}$, we obtain
\begin{equation*}\label{est-I-L2-4}
\begin{split}
 |III_1|&\lesssim  \|(\partial_2{u}_{\neq}^1,\,\partial_1{u}_{\neq}^1)\|_{L^2}\|{u}_{\neq}\|_{L^2}
 \|\partial_1{u}^2_{\neq}\|_{L^2}^{\frac{1}{2}} \|\partial_1^2{u}^1_{\neq}\|_{L^2}^{\frac{1}{2}}\\
 &\lesssim  \|(\partial_2{u}_{\neq}^1,\,\partial_1{u}_{\neq}^1)\|_{L^2}
\bigl (\|\partial_1{u}_{\neq}\|_{L^2}^2+\|\partial_1^2{u}^1_{\neq}\|_{L^2}^2\bigr).
\end{split}
\end{equation*}

For $III_2$, we have
\begin{equation*}\label{est-Hins-L2-4}
\begin{split}
 |III_2| &\lesssim\|\pa_2\bar{u}^1\|_{L^2}\|u_{\neq}^1\|_{L^2_y(L^\infty_x)}\|u_{\neq}^2\|_{L^2_x(L^\infty_y)}\\
   &\lesssim \|\pa_2\bar{u}^1\|_{L^2}\|u_{\neq}^1\|_{L^2}^{\frac12}\|\pa_1u_{\neq}^1\|_{L^2}^{\frac12}
   \|u_{\neq}^2\|_{L^2}^{\frac12}\|\pa_2u_{\neq}^2\|_{L^2}^{\frac12}\\
& \lesssim
  \|\pa_2\bar{u}^1\|_{L^2}\|\pa_1u_{\neq}^1\|_{L^2}^{\frac12}
    \|\pa_1u_{\neq}^1\|_{L^2}^{\frac12}\|\partial_1u_{\neq}^2\|_{L^2}^{\frac12}\|\pa_1u_{\neq}^1\|_{L^2}^{\frac12}  \lesssim \|\pa_2\bar{u}^1\|_{L^2}\|\pa_1u_{\neq} \|_{L^2}^2.
\end{split}
\end{equation*}
Thus we obtain
\begin{equation}\label{est-Hins-L2-6}
\begin{split}
\frac{d}{dt}&\|\bar{\rho}^{\frac12} u_{\neq}(t)\|_{L^2}^2 + 2\|\partial_1 u_{\neq}\|_{L^2}^2 \lesssim \|\pa_2\bar{u}^1\|_{L^2}\|\pa_1u_{\neq} \|_{L^2}^2\\
&+ \|(\partial_2 u_{\neq}^1,\partial_1 u_{\neq}^1)\|_{L^2}
        \bigl(\|\partial_1 u_{\neq}\|_{L^2}^2 + \|\partial_1^2 u_{\neq}^1\|_{L^2}^2\bigr)  + \|(F_1, F_2)\|_{L^2} \|u_{\neq}\|_{L^2}.
\end{split}
\end{equation}

\noindent\textbf{$\bullet$ The energy estimate of $\p_1 u_{\neq}$} \smallskip

By  taking the $L^2$ inner product of the $u_{\neq}$ equations in \eqref{eqns-HINS-aver-9} with $\partial_t u_{\neq}$ and integrating by parts, we find
\begin{equation}\label{est-Hins-H1-u-neq-1}
\begin{split}
\|\bar{\rho}^{\frac12} \partial_t u_{\neq}\|_{L^2}^2
+ \frac{1}{2}\frac{d}{dt}\|\partial_1 u_{\neq}(t)\|_{L^2}^2 =
&\int_{\Om} \bigl(F_1 - \bar{\rho} \,\bar{u}^1 \partial_1 u_{\neq}^1
   - \bar{\rho} \,u_{\neq}^2 \partial_2 \bar{u}^1\bigr) \partial_t u_{\neq}^1 \,dx \\
& + \int_{\Om} \bigl(F_2 - \bar{\rho} \,\bar{u}^1 \partial_1 u_{\neq}^2\bigr) \partial_t u_{\neq}^2 \,dx.
\end{split}
\end{equation}

Note that
\begin{equation*}\label{est-Hins-H1-u-neq-2}
\begin{split}
\Bigl|\int_{\Om} \bar{\rho} \,\bar{u}^1 \partial_1 u_{\neq}^1 \partial_t u_{\neq}^1 \,dx\Bigr|
&\lesssim \|\bar{u}^1\|_{L^2_x(L^{\infty}_y)} \|\partial_1 u_{\neq}^1\|_{L^2_y(L^{\infty}_x)} \|\partial_t u_{\neq}^1\|_{L^2} \\
&\lesssim \|\bar{u}^1\|_{L^2}^{\frac12} \|\partial_2\bar{u}^1\|_{L^2}^{\frac12}
          \|\partial_1^2 u_{\neq}^1\|_{L^2} \|\partial_t u_{\neq}^1\|_{L^2},
\end{split}
\end{equation*}
and
\begin{equation*}\label{est-Hins-H1-u-neq-3}
\begin{split}
\Bigl|\int_{\Om} \bar{\rho} \,\bar{u}^1 \partial_1 u_{\neq}^2 \partial_t u_{\neq}^2 \,dx\Bigr|
&\lesssim \|\bar{u}^1\|_{L^2_x(L_y^{\infty})} \|\partial_1 u_{\neq}^2\|_{L^{2}_y(L_x^{\infty})} \|\partial_t u_{\neq}^2\|_{L^2} \\
&\lesssim \|\bar{u}^1\|_{L^{2}}^{\frac12} \|\partial_2\bar{u}^1\|_{L^{2}}^{\frac12}
          \|\partial_1^2 u_{\neq}^2\|_{L^{2}} \|\partial_t u_{\neq}^2\|_{L^2}.
\end{split}
\end{equation*}
For $\int_{\Om} \bar{\rho} \,u_{\neq}^2 \partial_2 \bar{u}^1 \partial_t u_{\neq}^1 \,dx$, we have
\begin{equation*}
\begin{split}
\Bigl|\int_{\Om} \bar{\rho} \,u_{\neq}^2 \partial_2 \bar{u}^1 \partial_t u_{\neq}^1 \,dx\Bigr|
&\lesssim \|\partial_2\bar{u}^1\|_{L^2} \|u_{\neq}^2\|_{L^{\infty}} \|\partial_t u_{\neq}^1\|_{L^2},
\end{split}
\end{equation*}
which together with \eqref{est-I-L2-3} implies
\begin{equation*}\label{est-Hins-H1-u-neq-4}
\begin{split}
\Bigl|\int_{\Om} \bar{\rho} \,u_{\neq}^2 \partial_2 \bar{u}^1 \partial_t u_{\neq}^1 \,dx\Bigr|
&\lesssim \|\partial_2\bar{u}^1\|_{L^2}
          \|\partial_1 u_{\neq}^2\|_{L^{2}}^{\frac12}
          \|\partial_1^2 u_{\neq}^1\|_{L^{2}}^{\frac12} \|\partial_t u_{\neq}^1\|_{L^2}.
\end{split}
\end{equation*}

Substituting the above estimates into \eqref{est-Hins-H1-u-neq-1} yields
\begin{equation}\label{est-Hins-H1-u-neq-6}
\begin{split}
\frac{d}{dt}&\|\partial_1 u_{\neq}(t)\|_{L^2}^2 + 2\|\bar{\rho}^{\frac12} \partial_t u_{\neq}\|_{L^2}^2 \\
\lesssim & \Bigl(|\bar{u}^1\|_{L^2}^{\frac12} \|\partial_2\bar{u}^1\|_{L^2}^{\frac12}
        \|\partial_1^2 u_{\neq}^1\|_{L^2} + \|\partial_2\bar{u}^1\|_{L^2}
        \|\partial_1 u_{\neq}^2\|_{L^{2}}^{\frac12}
        \|\partial_1^2 u_{\neq}^1\|_{L^{2}}^{\frac12} \\
       & + \|\bar{u}^1\|_{L^{2}}^{\frac12} \|\partial_2\bar{u}^1\|_{L^{2}}^{\frac12}
        \|\partial_1^2 u_{\neq}^2\|_{L^{2}} + \|(F_1, F_2)\|_{L^2} \Bigr)\|\partial_t u_{\neq}\|_{L^2}.
\end{split}
\end{equation}

Combining \eqref{est-Hins-H1-u-neq-6} with \eqref{est-Hins-L2-6} gives
\begin{equation}\label{est-Hins-H1-u-neq-7}
\begin{split}
&\frac{d}{dt}\bigl(\|\bar{\rho}^{\frac12} u_{\neq}(t)\|_{L^2}^2 + \|\partial_1 u_{\neq}(t)\|_{L^2}^2\bigr)
+ 2\bigl(\|\bar{\rho}^{\frac12} \partial_t u_{\neq}\|_{L^2}^2 + \|\partial_1 u_{\neq}\|_{L^2}^2\bigr) \\
&\lesssim  \|\partial_2\bar{u}^1\|_{L^2} \|\partial_1 u_{\neq}\|_{L^2}^2
+ \|\bar{u}^1\|_{L^2}^{\frac12} \|\partial_2\bar{u}^1\|_{L^2}^{\frac12}
        \bigl(\|\partial_1^2 u_{\neq}^1\|_{L^2}^2 + \|\partial_t u_{\neq}^1\|_{L^2}^2\bigr) \\
&\quad + \|\partial_2\bar{u}^1\|_{L^2}
        \bigl(\|\partial_1 u_{\neq}^2\|_{L^{2}}^2 + \|\partial_1^2 u_{\neq}^1\|_{L^{2}}^2 + \|\partial_t u_{\neq}^1\|_{L^2}^2\bigr) \\
&\quad + \|\bar{u}^1\|_{L^{2}}^{\frac12} \|\partial_2\bar{u}^1\|_{L^{2}}^{\frac12}
        \bigl(\|\partial_1^2 u_{\neq}^2\|_{L^{2}}^2 + \|\partial_t u_{\neq}^2\|_{L^2}^2\bigr) \\
&\quad + \|(\partial_2 u_{\neq}^1,\partial_1 u_{\neq}^1)\|_{L^2}
        \bigl(\|\partial_1 u_{\neq}\|_{L^2}^2 + \|\partial_1^2 u_{\neq}^1\|_{L^2}^2\bigr)  + \|(F_1, F_2)\|_{L^2} \bigl(\|u_{\neq}\|_{L^2} + \|\partial_t u_{\neq}\|_{L^2}\bigr).
\end{split}
\end{equation}
Since $\dv u_{\neq} = 0$, from the momentum equations in \eqref{eqns-HINS-aver-9} we obtain
\begin{equation*}\label{est-Hins-H1-u-neq-8}
\begin{split}
&\|\partial_1^2 u_{\neq}\|_{L^2}^2 + \|\nabla \Pi_{\neq}\|_{L^2}^2
\lesssim \|\bar{\rho}\|_{L^{\infty}} \|\bar{\rho}^{\frac12} \partial_t u_{\neq}\|_{L^2}^2 \\
&\quad + \|\bar{\rho}\|_{L^{\infty}}^2 \|\bar{u}^1\|_{L^2_x(L^{\infty}_y)}^2
 \|\partial_1 u_{\neq}\|_{L^2_y(L^{\infty}_x)}^2 + \|\bar{\rho}\|_{L^{\infty}}^2 \|u_{\neq}^2\|_{L^{\infty}}^2 \|\partial_2\bar{u}^1\|_{L^2}^2 + \|(F_1, F_2)\|_{L^2}^2.
\end{split}
\end{equation*}
Using \eqref{est-I-L2-3}, this implies
\begin{equation}\label{est-Hins-H1-u-neq-9}
\begin{split}
\|\partial_1^2 u_{\neq}\|_{L^2}^2 + \|\nabla \Pi_{\neq}\|_{L^2}^2
&\lesssim \|\bar{\rho}^{\frac12} \partial_t u_{\neq}\|_{L^2}^2 + \|\bar{u}^1\|_{L^2} \|\partial_2\bar{u}^1\|_{L^2} \|\partial_1^2 u_{\neq}\|_{L^2}^2 \\
&\quad + \|\partial_2\bar{u}^1\|_{L^2}^2 \|\partial_1 u_{\neq}^2\|_{L^2} \|\partial_1^2 u_{\neq}^1\|_{L^2} + \|(F_1, F_2)\|_{L^2}^2.
\end{split}
\end{equation}

Define
\begin{equation}\label{def-H1-u-neq-1}
\begin{split}
\mathcal{E}_{1, \neq}(t) &\eqdefa \|(\bar{\rho}^{\frac12} u_{\neq}, \partial_1 u_{\neq})(t)\|_{L^2}^2 
\andf\\
\mathcal{D}_{1, \neq}(t) &\eqdefa \|(\bar{\rho}^{\frac12} u_{\neq}, u_{\neq}, \partial_1 u_{\neq}, \partial_t u_{\neq}, \partial_1^2 u_{\neq}, \nabla \Pi_{\neq})(t)\|_{L^2}^2.
\end{split}
\end{equation}

From \eqref{est-Hins-H1-u-neq-7}, \eqref{est-Hins-H1-u-neq-9}, and the fact $\|\bar{\rho}^{\frac12} u_{\neq}\|_{L^2}^2 \lesssim \|u_{\neq}\|_{L^2}^2 \lesssim \|\partial_1 u_{\neq}\|_{L^2}^2$, we deduce that there exists a uniform positive constant $c_1$ so that
\begin{equation}\label{est-Hins-H1-u-neq-10}
\begin{split}
\frac{d}{dt} &\mathcal{E}_{1, \neq}(t) + 8c_1 \mathcal{D}_{1, \neq}
\lesssim \bigl(\|(\bar{u}^1, \partial_2\bar{u}^1, \partial_2 u_{\neq}^1, \partial_1 u_{\neq}^1)\|_{L^{2}}
+\|\bar{u}^1\|_{L^2}^2+ \|\partial_2\bar{u}^1\|_{L^2}^2 \bigr)\\
&\quad\times
        \|(\partial_1 u_{\neq}, \partial_1^2 u_{\neq}, \partial_t u_{\neq})\|_{L^2}^2  + \|(F_1, F_2)\|_{L^2} \|(\partial_1 u_{\neq}, \partial_t u_{\neq})\|_{L^2}+ \|(F_1, F_2)\|_{L^2}^2.
\end{split}
\end{equation}

\noindent{\bf $\bullet$ The estimate of $\|(F_1, F_2)\|_{L^2}$}\smallskip

We first deduce from \eqref{eqns-HINS-aver-8} that
\begin{equation*}\label{eqns-HINS-aver-tbaru-1}
  \begin{split}
&\|\partial_t\bar{u}^1\|_{L^2}\lesssim \|\partial_2 (\overline{{u}_{\neq}^2u^1_{\neq}})\|_{L^2}+\| {\rho}_{\neq}\|_{L^{\infty}}\|\partial_t {u}_{\neq}^1\|_{L^2} +\|\partial_1({u}^1u^1)_{\neq}\|_{L^2}+\|\partial_2({u}^2u^1)_{\neq}\|_{L^2}.
  \end{split}
\end{equation*}
Note that
\begin{equation*}\label{eqns-HINS-aver-8eee}
  \begin{split}
&\|\partial_1({u}^1u^1)_{\neq}\|_{L^2}\lesssim \|{u}^1\|_{L^2_x(L^{\infty}_y)}\|\partial_1u_{\neq}^1\|_{L^2_y(L^{\infty}_x)}\lesssim \|{u}^1\|_{L^2}^{\frac{1}{2}}\|\partial_2{u}^1\|_{L^2}^{\frac{1}{2}}
\|\partial_1^2u_{\neq}^1\|_{L^2}
  \end{split}
\end{equation*}
and
\begin{equation*}\label{eqns-HINS-aver-8fff}
  \begin{split}
\|\partial_2 (\overline{{u}_{\neq}^2u^1_{\neq}})\|_{L^2}+\|\partial_2({u}^2u^1)_{\neq}\|_{L^2}&\lesssim\|u_{\neq}^2\|_{L^{\infty}}\|\partial_2{u}^1\|_{L^2}+ \|(\bar{u}^1,\,u_{\neq}^1)\|_{L^2_x(L^{\infty}_y)}
\|\partial_1u_{\neq}^1\|_{L^2_y(L^{\infty}_x)} \\
&\lesssim \|\partial_2{u}^1\|_{L^2}\|u_{\neq}^2\|_{L^{\infty}}+ \|{u}^1\|_{L^2}^{\frac{1}{2}}\|\partial_2{u}^1\|_{L^2}^{\frac{1}{2}}\|\partial_1^2u_{\neq}\|_{L^2},
  \end{split}
\end{equation*}
then, due to \eqref{est-I-L2-3}, we obtain
\begin{equation}\label{eqns-HINS-aver-8ggg}
  \begin{split}
\|\partial_t\bar{u}^1\|_{L^2} \lesssim \|{\rho}_{\neq}\|_{L^{\infty}}\|\partial_t {u}_{\neq}^1\|_{L^2} &+\|\partial_2{u}^1\|_{L^2}\|\partial_1{u}^2_{\neq}\|_{L^2}^{\frac{1}{2}} \|\partial_1^2{u}^1_{\neq}\|_{L^2}^{\frac{1}{2}}\\
&+ \|{u}^1\|_{L^2}^{\frac{1}{2}}\|\partial_2{u}^1\|_{L^2}^{\frac{1}{2}}\|\partial_1^2u_{\neq}\|_{L^2}.
  \end{split}
\end{equation}
Thanks to \eqref{eqns-HINS-aver-10}, one has
\begin{equation*}\label{eqns-HINS-aver-11}
  \begin{split}
 \|F_1\|_{L^2}\lesssim&  \|{\rho}_{\neq }\|_{L^{\infty}}(\|\partial_t \bar{u}^1\|_{L^2}+  \|\partial_t{u}_{\neq}^1\|_{L^2})\\
&  + \bigl(\|\bar{u}^1\|_{L^2_x(L^{\infty}_y)} + \|{u}_{\neq}^1\|_{L^2_x(L^{\infty}_y)}\bigr)\|\partial_1u_{\neq}^1\|_{L^2_y(L^{\infty}_x)} + \|\partial_2 (\bar{u}^1,\, {u}_{\neq}^1)\|_{L^2}\|{u}^2_{\neq}\|_{L^{\infty}},
  \end{split}
\end{equation*}
and
\begin{equation*}\label{eqns-HINS-aver-12}
  \begin{split}
 \|F_2\|_{L^2}\lesssim &\|{\rho}_{\neq}\|_{L^{\infty}}\|\partial_t  {u}_{\neq}^2\|_{L^2} \\
&+ \|{u}^2_{\neq}\|_{L^2_x(L^{\infty}_y)}\|\partial_1{u}^1_{\neq}\|_{L^2_y(L^{\infty}_x)}+  \bigl(\|\bar{u}^1\|_{L^2_x(L^{\infty}_y)} +\|{u}_{\neq}^1\|_{L^2_x(L^{\infty}_y)} \bigr)\|\partial_1{u}^2_{\neq}\|_{L^2_y(L^{\infty}_x)},
  \end{split}
\end{equation*}
which along with \eqref{eqns-HINS-aver-8ggg} and \eqref{est-I-L2-3} implies
\begin{equation}\label{eqns-HINS-aver-13}
  \begin{split}
 \|(F_1, F_2)\|_{L^2}\lesssim&  (\|{\rho}_{\neq }\|_{L^{\infty}} + \|(u,\,\omega)\|_{L^{2}})\|(\partial_1{u}_{\neq},\,\partial_1^2u_{\neq},\,\partial_t{u}_{\neq})\|_{L^2}.
  \end{split}
\end{equation}

\noindent\textbf{$\bullet$ The exponential decay of $\|(u_{\neq}, \partial_1 u_{\neq})\|_{L^2}$}
\smallskip

Substituting \eqref{eqns-HINS-aver-13} into \eqref{est-Hins-H1-u-neq-10} yields
\begin{equation}\label{est-Hins-H1-u-neq-15}
\begin{split}
\frac{d}{dt} \mathcal{E}_{1, \neq}(t) + 8c_1\mathcal{D}_{1, \neq}
\leq C_3 \bigl(\|\rho_{\neq}\|_{L^{\infty}} + \|(u, \omega)\|_{L^{2}}\bigr)
\bigl(1 + (\|\rho_{\neq}\|_{L^{\infty}} + \|(u, \omega)\|_{L^{2}})\bigr) \mathcal{D}_{1, \neq}.
\end{split}
\end{equation}
If $\d_0$ in \eqref{condition-smallness-sol-1} is so small that $\d_0\leq \frac{1}{2}\min\bigl\{c_1C_3^{-1},\,1\bigr\},$
then we have
\begin{equation}\label{est-Hins-H1-u-neq-17}
\frac{d}{dt} \mathcal{E}_{1, \neq} (t)+ 7c_1\mathcal{D}_{1, \neq} \leq 0,
\end{equation}
which implies for $t<T^\ast$
\begin{equation}\label{est-Hins-H1-u-neq-18}
\sup_{\tau\in[0, t]} \mathcal{E}_{1, \neq}(\tau) \leq
\|(\bar{\rho}_0^{\frac12} u_{0,\neq}, \partial_1 u_{0,\neq})\|_{L^2}^2 \, e^{-7c_1 t}.
\end{equation}
Multiplying \eqref{est-Hins-H1-u-neq-17} by $e^{6c_1 t}$ gives
\begin{equation*}\label{est-Hins-H1-u-neq-17aaa}
\frac{d}{dt} \bigl(e^{6c_1 t}\mathcal{E}_{1, \neq}\bigr) + 7c_1 e^{6c_1 t}\mathcal{D}_{1, \neq} \leq 6c_1 e^{6c_1 t}\mathcal{E}_{1, \neq}.
\end{equation*}
From \eqref{est-Hins-H1-u-neq-18} we thus obtain
\begin{equation}\label{est-Hins-H1-u-neq-19}
\int_0^{T} e^{6c_1 t} \mathcal{D}_{1, \neq} \,dt \lesssim
\|(\bar{\rho}_0^{\frac12} u_{0,\neq}, \partial_1 u_{0,\neq})\|_{L^2}^2 \quad \forall\,T<T^\ast.
\end{equation}
This completes the proof of Proposition \ref{prop-HINS-H1-neq-1}.
\end{proof}

\subsection{Estimate of $\|(\omega_{\neq}, \partial_1\omega_{\neq})\|_{L^{\infty}_T(L^2)}$}

\begin{prop}[Estimate of $\|(\omega_{\neq}, \partial_1\omega_{\neq})\|_{L^2}$]
\label{prop-HINS-H2-neq-2}
{\sl Under the assumptions of Proposition \ref{prop-HINS-H1-neq-1}, if in addition
\begin{equation}\label{conditions-sol-3}
 \|\nabla\rho\|_{L^\infty_{t}(L^{\infty}\cap \dot{H}^1)} \leq 2\|\nabla\rho_0\|_{L^{\infty}\cap \dot{H}^1} \, e^{\frac{c_1}2 t} \andf  \|(u, \omega, \nabla\omega)\|_{L^\infty_{t}(L^2)} <\infty,
 \end{equation}
 for any $t<T^\ast,$ 
then there exists a positive constant $c_2$ such that
\begin{equation}\label{est-Hins-expo-2}
\begin{split}
&\sup_{t\in[0,T]} \bigl(e^{3c_{2} t} \|(\partial_1 u_{\neq}, \partial_1 \omega_{\neq})(t)\|_{L^2}^2 \bigr)
 + \int_0^{T} e^{2c_{2} t} \|(\partial_1 \omega_{\neq}, \partial_t\omega_{\neq}, \partial_1^2\omega_{\neq})(t)\|_{L^2}^2 \,dt \\
&\lesssim \|(\partial_1 u_{0\neq}, \partial_1 \omega_{0\neq})\|_{L^2}^2  
+ \bigl(1 + \|\nabla\rho_0\|_{L^{\infty}}^2\bigr) \|u_{0}\|_{H^1}^2 \bigl(1 + \|(u, \omega, \nabla\omega)\|_{L^{\infty}_T(L^{2})}^2\bigr).
\end{split}
\end{equation}}
\end{prop}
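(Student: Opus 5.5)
We derive the equation for $\omega_{\neq}$ and run the energy scheme of Proposition \ref{prop-HINS-H1-neq-1} one derivative higher. The density-gradient terms grow like $e^{\frac{c_1}{2}t}$. We beat this growth with the $e^{6c_1 t}$-weighted integrability already available from \eqref{est-Hins-expo-1}.

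\textbf{Vorticity equation.} Apply $\nabla^\perp\cdot$ to the momentum equation of \eqref{eqns-HINS-1}, written as $\rho D_t u-\partial_1^2u+\nabla\Pi=0$. This gives
\[
\rho(\partial_t\omega+u\cdot\nabla\omega)-\partial_1^2\omega=-\nabla^\perp\rho\cdot D_tu-\rho\,\partial_ju\cdot\nabla u^{j\perp}+\cdots,
\]
that is, the commutator terms are of the form $\nabla^\perp\rho\cdot D_t u$. Then apply $\mathcal P_{\neq}$ and use the decomposition \eqref{decomp-bar-neq-1}. The linear part becomes $\bar\rho\,\partial_t\omega_{\neq}-\partial_1^2\omega_{\neq}+\bar\rho\,\bar u^1\partial_1\omega_{\neq}+\bar\rho\,u^2_{\neq}\partial_2\bar\omega$. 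Everything else goes into a forcing $G$, which contains terms with $\rho_{\neq}$, quadratic terms in $u_{\neq}$, and the density-gradient terms $(\nabla^\perp\rho\cdot D_tu)_{\neq}$.

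\textbf{Energy identities.} Test the $\omega_{\neq}$ equation with $\omega_{\neq}$ and with $\partial_t\omega_{\neq}$, exactly as in \eqref{est-Hins-L2-2}--\eqref{est-Hins-H1-u-neq-6}. The transport term $\bar u^1\partial_1\omega_{\neq}$ vanishes against $\omega_{\neq}$. Against $\partial_t\omega_{\neq}$ it is bounded by anisotropic Agmon inequalities of the type \eqref{est-I-L2-3}, giving $\|\bar u^1\|_{L^2}^{1/2}\|\partial_2\bar u^1\|_{L^2}^{1/2}\|\partial_1^2\omega_{\neq}\|_{L^2}\|\partial_t\omega_{\neq}\|_{L^2}$, which is absorbed by the smallness \eqref{condition-smallness-sol-1}. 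The Poincaré inequality \eqref{average-u-2-c} applies to $\omega_{\neq}$, so the left side again produces a damping $\frac{d}{dt}\mathcal E_{2,\neq}+8c_2\mathcal D_{2,\neq}$ with $\mathcal E_{2,\neq}=\|(\bar\rho^{1/2}\omega_{\neq},\partial_1\omega_{\neq})\|_{L^2}^2$. The term $\bar\rho\,u^2_{\neq}\partial_2\bar\omega$ is not small in a self-absorbing way. It is bounded by $\|\partial_1u_{\neq}\|_{L^2}^{1/2}\|\partial_1^2u_{\neq}\|_{L^2}^{1/2}\|\nabla\omega\|_{L^2}$, using $\|\nabla\omega\|_{L^\infty_T(L^2)}<\infty$ from \eqref{conditions-sol-3}. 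It therefore acts as a source that is integrable with the weight $e^{6c_1t}$ by \eqref{est-Hins-expo-1}. The $\rho_{\neq}$ and quadratic parts of $G$ are handled as in \eqref{eqns-HINS-aver-13}: they are bounded by $\delta_0$ times the dissipation plus sources of the form $\|(u,\omega,\nabla\omega)\|_{L^2}\times\|(\partial_1u_{\neq},\partial_1^2u_{\neq},\partial_tu_{\neq})\|_{L^2}$.

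\textbf{Main obstacle.} The key term is $\nabla^\perp\rho\cdot D_tu$. Split $D_tu$ into its oscillatory part and the average $\partial_t\bar u^1$. Bound the product as $\|\nabla\rho\|_{L^\infty}\|D_tu\|_{L^2}$, or, for the average part, as $\|\nabla\rho\|_{L^4}\|\partial_t\bar u^1\|_{L^4}$ using $\nabla\rho\in\dot H^1$. By \eqref{eqns-HINS-aver-8ggg}, $\partial_t\bar u^1$ is controlled by oscillatory quantities. Hence, up to terms already absorbed, $\|G\|_{L^2}\lesssim \|\nabla\rho\|_{L^\infty\cap\dot H^1}\,\mathcal D_{1,\neq}^{1/2}(1+\|(u,\omega,\nabla\omega)\|_{L^2})$.

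\textbf{Weighted integration.} Choose $c_2\le c_1$ with $3c_2$ below the damping rate, and multiply the energy inequality by $e^{2c_2t}$. The weight on the source is then at most $e^{2c_2t}\|\nabla\rho\|^2\lesssim\|\nabla\rho_0\|^2e^{(2c_2+c_1)t}\le\|\nabla\rho_0\|^2e^{6c_1t}$, using \eqref{conditions-sol-3}. After Young's inequality, \eqref{est-Hins-expo-1} bounds the integral of the source by $\|\nabla\rho_0\|^2\|u_0\|_{H^1}^2(1+\|(u,\omega,\nabla\omega)\|_{L^\infty_T(L^2)}^2)$. Gronwall then yields the $\int e^{2c_2t}\mathcal D_{2,\neq}$ bound. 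For the supremum bound, use $\frac{d}{dt}(e^{3c_2t}\mathcal E_{2,\neq})\le 3c_2e^{3c_2t}\mathcal E_{2,\neq}-8c_2e^{3c_2t}\mathcal D_{2,\neq}+e^{3c_2t}(\text{source})$; the $\mathcal E$ term is absorbed by the dissipation. Adding the $\partial_1u_{\neq}$ part from Proposition \ref{prop-HINS-H1-neq-1} gives \eqref{est-Hins-expo-2}.

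The delicate points are twofold. First, the $\nabla\rho$-terms must always pair with a factor carrying $e^{6c_1t}$ decay and never with $\bar u$ alone. Second, the exponents must stay compatible: $2c_2+c_1\le 6c_1$ and $3c_2<8c_2$.
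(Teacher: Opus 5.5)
Your proposal is correct and follows essentially the same route as the paper: project the vorticity equation $\rho D_t\omega-\partial_1^2\omega=-D_tu\cdot\nabla^\perp\rho$ with $\mathcal P_{\neq}$ and test it with $\omega_{\neq}$ and $\partial_t\omega_{\neq}$. You then bound $D_tu$, and $\partial_t\bar\omega$ (via the averaged vorticity equation, the analogue of the step you cite for $\rho_{\neq}\partial_t\bar u^1$), by $(1+\|(u,\omega,\nabla\omega)\|_{L^2})\,\mathcal D_{1,\neq}^{1/2}$ up to absorbable terms, and pay for the $e^{c_1 t}$ growth of $\|\nabla\rho\|_{L^\infty}^2$ with $\int_0^T e^{6c_1 t}\mathcal D_{1,\neq}\,dt$ from Proposition \ref{prop-HINS-H1-neq-1}. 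The differences are cosmetic: the paper merges the $u_{\neq}$ and $\omega_{\neq}$ energies into a single $\mathcal E_{2,\neq}$ rather than appending the $\partial_1 u_{\neq}$ bound at the end, and in 2D the vorticity equation has no extra $\rho\,\partial_j u\cdot\nabla u^{j\perp}$ term.
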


\begin{proof}
Applying the operator $\nabla^{\perp}\cdot$ to the momentum equations in \eqref{eqns-HINS-1} gives
\begin{equation}\label{eqns-HINS-omega-2}
\rho(\partial_t \omega + u \cdot \nabla \omega) - \partial_1^2 \omega
= -(\partial_t u + u \cdot \nabla u) \cdot \nabla^{\perp}\rho.
\end{equation}
Applying $\mathcal{P}_{\neq}$ to \eqref{eqns-HINS-omega-2} yields
\begin{equation}\label{eqns-HINS-oscillation-omega-1}
\bar{\rho}\bigl(\partial_t \omega_{\neq} + \bar{u}^1 \partial_1\omega_{\neq}\bigr) - \partial_1^2 \omega_{\neq} = \sum_{j=1}^4 G_j,
\end{equation}
with
\begin{equation}\label{eqns-HINS-oscillation-omega-2}
\begin{split}
G_1 &\eqdefa -\rho_{\neq} \partial_t \overline{\omega}, \quad G_2 \eqdefa -\bar{\rho}\Bigl((u_{\neq}^1 \partial_1\omega_{\neq})_{\neq}
       + u_{\neq}^2 \partial_2 \overline{\omega} + (u_{\neq}^2 \partial_2 \omega_{\neq})_{\neq}\Bigr), \\
G_3 &\eqdefa -\Bigl[\rho_{\neq} \bigl(\partial_t \omega_{\neq}
       + \partial_1 (u^1 \omega)_{\neq} + \partial_2 (u^2 \omega)_{\neq}\bigr)\Bigr]_{\neq}, \,\quad G_4 \eqdefa -\Bigl[\bigl(\partial_t u + u \cdot \nabla u\bigr)\cdot \nabla^{\perp}\rho\Bigr]_{\neq}.
\end{split}
\end{equation}
These expressions follow from the decompositions:
\begin{align*}
\rho\partial_t\omega - \overline{\rho\partial_t\omega}
&= \overline{\rho} \partial_t \overline{\omega} + \overline{\rho} \partial_t \omega_{\neq}
   + \rho_{\neq} \partial_t \overline{\omega} + \rho_{\neq} \partial_t \omega_{\neq}
   - \overline{\rho} \partial_t \overline{\omega} - \overline{\rho_{\neq} \partial_t \omega_{\neq}} \\
&= \overline{\rho} \partial_t \omega_{\neq} + \rho_{\neq} \partial_t \overline{\omega}
   + \bigl(\rho_{\neq} \partial_t \omega_{\neq}\bigr)_{\neq},
\end{align*}
and (using $\bar{u}^2 = 0$)
\begin{align*}
\rho u \cdot \nabla \omega - \overline{\rho u \cdot \nabla \omega}
&= \overline{\rho} \bar{u} \cdot \nabla \omega_{\neq}
   + \overline{\rho} u_{\neq} \cdot \nabla \omega
   + \rho_{\neq} u \cdot \nabla\omega - \overline{\rho} \overline{u \cdot \nabla \omega}
   - \overline{\rho_{\neq} \bigl( u \cdot \nabla \omega\bigr)_{\neq}} \\
&= \overline{\rho} \bar{u}^1 \partial_1 \omega_{\neq}
   + \overline{\rho} \bigl( u_{\neq} \cdot \nabla\omega_{\neq}\bigr)_{\neq} + \bigl[\rho_{\neq} \bigl( u \cdot \nabla \omega\bigr)_{\neq}\bigr]_{\neq}
   + \overline{\rho} u_{\neq}^2 \partial_2 \overline{\omega}.
\end{align*}

\noindent\textbf{$\bullet$ The energy estimate of $\om_{\neq} $} \smallskip

Taking the $L^2$ inner product of \eqref{eqns-HINS-oscillation-omega-1} with $\omega_{\neq}$ and integrating by parts gives
\begin{equation*}\label{est-H1-oscillation-omega-1}
\frac{1}{2}\frac{d}{dt}\|\bar{\rho}^{\frac12} \omega_{\neq}(t)\|_{L^2}^2 + \|\partial_1\omega_{\neq}\|_{L^2}^2
= \frac{1}{2}\int_{\Om} \omega_{\neq}^2 \partial_t\bar{\rho} \,dx
  +\sum_{j=1}^4 \int_{\Om} \omega_{\neq}  G_j \,dx.
\end{equation*}
Together with the $\bar{\rho}$ equation \eqref{eqns-HINS-aver-rho-1}, this implies
\begin{equation}\label{est-H1-oscillation-omega-2}
\begin{split}
\frac{d}{dt}\|\bar{\rho}^{\frac12} \omega_{\neq}(t)\|_{L^2}^2 + 2\|\partial_1\omega_{\neq}\|_{L^2}^2
&\lesssim \|\omega_{\neq}\|_{L^2_x(L^{\infty}_y)} \|\omega_{\neq}\|_{L^2_y(L^{\infty}_x)}
         \|\partial_2(\overline{u_{\neq}^2 \rho_{\neq}})\|_{L^2} \\
&\quad + \sum_{j=1}^4\|\omega_{\neq}\|_{L^2}  \|G_j\|_{L^2}.
\end{split}
\end{equation}
Note that
\begin{equation*}\label{est-H1-oscillation-omega-3}
\|\partial_2(\overline{u_{\neq}^2 \rho_{\neq}})\|_{L^2}
\lesssim \|\partial_1 u_{\neq}^1\|_{L^2} \|\rho_{\neq}\|_{L^{\infty}}
        + \|u_{\neq}^2\|_{L^{\infty}} \|\partial_2\rho_{\neq}\|_{L^2}.
\end{equation*}
Using \eqref{est-I-L2-3}, we obtain
\begin{equation}\label{est-H1-oscillation-omega-11}
\begin{split}
\|\partial_2(\overline{u_{\neq}^2 \rho_{\neq}})\|_{L^2}
&\lesssim \|\partial_1 u_{\neq}^1\|_{L^2} \|\rho_{\neq}\|_{L^{\infty}} + \|\partial_1 u_{\neq}\|_{L^2}^{\frac12} \|\partial_1^2 u_{\neq}\|_{L^2}^{\frac12}
         \|\partial_2\rho_{\neq}\|_{L^2}.
\end{split}
\end{equation}
Hence, from \eqref{est-H1-oscillation-omega-2}, \eqref{est-H1-oscillation-omega-11}, the fact $\|\omega_{\neq}\|_{L^2} \lesssim \|\partial_1\omega_{\neq}\|_{L^2}$, and Young's inequality, we find
\begin{equation}\label{est-H1-oscillation-omega-15}
\begin{split}
\frac{d}{dt}&\|\bar{\rho}^{\frac12} \omega_{\neq}\|_{L^2}^2 + \frac{3}{2}\|\partial_1\omega_{\neq}\|_{L^2}^2 \lesssim \sum_{j=1}^4 \|G_j\|_{L^2}^2 \\
&+ \|\omega_{\neq}\|_{L^2} \|\partial_2\omega_{\neq}\|_{L^2} \Bigl(
       \|\partial_1 u_{\neq}\|_{L^2}^2 \|\rho_{\neq}\|_{L^{\infty}}^2 + \|\partial_1 u_{\neq}\|_{L^2} \|\partial_1^2 u_{\neq}\|_{L^2}
          \|\partial_2\rho_{\neq}\|_{L^2}^2\Bigr).
\end{split}
\end{equation}

Taking the $L^2$ inner product of \eqref{eqns-HINS-oscillation-omega-1} with $\partial_t\omega_{\neq}$ gives
\begin{equation*}
\begin{split}
\frac{1}{2}&\frac{d}{dt}\|\partial_1\omega_{\neq}(t)\|_{L^2}^2 + \|\bar{\rho}^{\frac12} \partial_t\omega_{\neq}\|_{L^2}^2 = -\int_{\Om} \bar{\rho} \bar{u}^1 \partial_1\omega_{\neq} \,\partial_t\omega_{\neq} \,dx
   +\sum_{j=1}^4 \int_{\Om} \partial_t\omega_{\neq}  G_j \,dx,
\end{split}
\end{equation*}
which implies
\begin{equation*}
\begin{split}
\frac{d}{dt}&\|\partial_1\omega_{\neq}(t)\|_{L^2}^2 + 2\|\bar{\rho}^{\frac12} \partial_t\omega_{\neq}\|_{L^2}^2 \\
&\lesssim \|\bar{u}^1\|_{L^2_x(L^{\infty}_y)} \|\partial_1\omega_{\neq}\|_{L^2_y(L^{\infty}_x)}
         \|\bar{\rho}^{\frac12} \partial_t\omega_{\neq}\|_{L^2}  + \sum_{j=1}^4\|\bar{\rho}^{\frac12} \partial_t\omega_{\neq}\|_{L^2}  \|G_j\|_{L^2}.
\end{split}
\end{equation*}
Thus, thanks to Young's inequality, one has
\begin{equation}\label{est-L2-oscillation-t-omega-3}
\begin{split}
\frac{d}{dt}&\|\partial_1\omega_{\neq}(t)\|_{L^2}^2 + \|\bar{\rho}^{\frac12} \partial_t\omega_{\neq}\|_{L^2}^2 \lesssim \|\bar{u}^1\|_{L^2} \|\partial_2\bar{u}^1\|_{L^2} \|\partial_1^2\omega_{\neq}\|_{L^2}^2
         + \sum_{j=1}^4 \|G_j\|_{L^2}^2.
\end{split}
\end{equation}
From \eqref{eqns-HINS-oscillation-omega-1}, we have
\begin{equation}\label{est-L2-oscillation-t-omega-4}
\begin{split}
\|\partial_1^2\omega_{\neq}\|_{L^2}
&\lesssim \|\partial_t\omega_{\neq}\|_{L^2} + \|\bar{u}^1 \partial_1\omega_{\neq}\|_{L^2}
         + \sum_{j=1}^4 \|G_j\|_{L^2} \\
&\lesssim \|\partial_t\omega_{\neq}\|_{L^2}
         + \|\bar{u}^1\|_{L^2}^{\frac12} \|\partial_2\bar{u}^1\|_{L^2}^{\frac12}
           \|\partial_1^2\omega_{\neq}\|_{L^2}
         + \sum_{j=1}^4 \|G_j\|_{L^2}.
\end{split}
\end{equation}
Combining \eqref{est-L2-oscillation-t-omega-3} with \eqref{est-L2-oscillation-t-omega-4} and \eqref{est-H1-oscillation-omega-15}, we get from 
the fact $\|\bar{\rho}^{\frac12} \omega_{\neq}\|_{L^2} \lesssim \|\omega_{\neq}\|_{L^2} \lesssim \|\partial_1\omega_{\neq}\|_{L^2}$ that
\begin{equation}\label{est-L2-oscillation-t-omega-6}
\begin{split}
\frac{d}{dt}&\|(\bar{\rho}^{\frac12} \omega_{\neq}, \partial_1\omega_{\neq})(t)\|_{L^2}^2
           + \mathfrak{e}_2\|(\bar{\rho}^{\frac12} \omega_{\neq},\partial_1\omega_{\neq}, \partial_t\omega_{\neq}, \partial_1^2\omega_{\neq})\|_{L^2}^2 \\
&\lesssim \|\omega_{\neq}\|_{L^2} \|\partial_2\omega_{\neq}\|_{L^2} \Bigl(
          \|\partial_1 u_{\neq}\|_{L^2}^2 \|\rho_{\neq}\|_{L^{\infty}}^2+ \|\partial_1 u_{\neq}\|_{L^2} \|\partial_1^2 u_{\neq}\|_{L^2}
            \|\partial_2\rho_{\neq}\|_{L^2}^2\Bigr) \\
&\quad + \|(u, \omega)\|_{L^2} \bigl(1 + \|(u, \omega)\|_{L^2}\bigr)
          \|(\partial_t\omega_{\neq}, \partial_1^2\omega_{\neq})\|_{L^2}^2  + \bigl(1 + \|(u, \omega)\|_{L^2}\bigr) \sum_{j=1}^4 \|G_j\|_{L^2}^2.
\end{split}
\end{equation} 

\noindent\textbf{$\bullet$  The estimate of $\|G_j\|_{L^2}$.}
\vskip 0.2cm

In view of \eqref{eqns-HINS-oscillation-omega-2}, we have $\|G_1\|_{L^2} \lesssim  \|\rho_{\neq}\|_{L^{\infty}} \|\partial_t\overline{\omega}\|_{L^2}$,
\begin{equation*} 
\begin{split}
\|G_2\|_{L^2}
\lesssim& \|u_{\neq}^1 \partial_1\omega_{\neq}\|_{L^2} + \|u_{\neq}^2 \partial_2\omega\|_{L^2} 
\lesssim  \|u_{\neq}^1\|_{L^{\infty}} \|\partial_1\omega_{\neq}\|_{L^2}
        + \|u_{\neq}^2\|_{L^{\infty}} \|\partial_2\omega\|_{L^2},
\end{split}
\end{equation*}
\begin{equation*}\label{est-H1-oscillation-omega-6}
\begin{split}
\|G_3\|_{L^2}
&\lesssim \|\rho_{\neq}\|_{L^{\infty}} \bigl(\|\partial_t \omega_{\neq}\|_{L^2}
        + \|\bar{u}^1 \partial_1\omega_{\neq} + (u_{\neq}^1 \partial_1\omega_{\neq})_{\neq} + u_{\neq}^2 \partial_2 \overline{\omega} + (u_{\neq}^2 \partial_2 \omega_{\neq})_{\neq}\|_{L^2}\bigr) \\
&\lesssim \|\rho_{\neq}\|_{L^{\infty}} \|\partial_t \omega_{\neq}\|_{L^2}
        + \|\rho_{\neq}\|_{L^{\infty}} \|(\bar{u}^1, u_{\neq}^1)\|_{L^{\infty}} \|\partial_1\omega_{\neq}\|_{L^2}  + \|u_{\neq}^2\|_{L^{\infty}} \|\partial_2\omega\|_{L^2},
\end{split}
\end{equation*}
and $\|G_4\|_{L^2} \lesssim \|\nabla\rho\|_{L^{\infty}} \|(\partial_t u + u \cdot \nabla u)\|_{L^2}$.

For $\partial_t \overline{\omega}$ in $G_1$, averaging \eqref{eqns-HINS-omega-2} with respect to $x_1 \in \mathbb{T}$ gives
\begin{equation*}\label{eqns-HINS-aver-omega-1}
\begin{split}
\bar{\rho}\bigl(\partial_t \overline{\omega} + \partial_2\overline{(u_{\neq}^2 \omega_{\neq})}\bigr)
&= -\overline{\rho_{\neq}(\partial_t \omega_{\neq} + \partial_2(u^2 \omega)_{\neq})} \\
&\quad - \overline{(\partial_t u + u \cdot \nabla u) \cdot \nabla^{\perp}\rho},
\end{split}
\end{equation*}
which implies
\begin{equation}\label{eqns-HINS-aver-omega-2}
\begin{split}
\|\partial_t \overline{\omega}\|_{L^2}
&\lesssim \|(\omega_{\neq} \partial_2 u_{\neq}^2 + u_{\neq}^2 \partial_2 \omega_{\neq})\|_{L^2}
        + \|\partial_2(u^2 \omega)\|_{L^2} \\
&\quad + \|\rho_{\neq}\|_{L^{\infty}} \|\partial_t \omega_{\neq}\|_{L^2}
 + \|\nabla\rho\|_{L^{\infty}} \|(\partial_t u + u \cdot \nabla u)\|_{L^2}.
\end{split}
\end{equation}
Note that
\begin{equation*}\label{eqns-HINS-aver-omega-3}
\begin{split}
\|(\omega_{\neq} \partial_2 u_{\neq}^2 + u_{\neq}^2 \partial_2 \omega_{\neq})\|_{L^2}
   + \|\partial_2(u^2 \omega)\|_{L^2}
&\lesssim \|u_{\neq}^2 \partial_2\omega\|_{L^2} + \|\omega \partial_1 u_{\neq}^1\|_{L^2} \\
&\lesssim \|u_{\neq}^2\|_{L^{\infty}} \|\partial_2\omega\|_{L^2}
        + \|\omega\|_{L^{2}_x(L^{\infty}_y)} \|\partial_1 u_{\neq}^1\|_{L^{2}_y(L^{\infty}_x)},
\end{split}
\end{equation*}
and
\begin{equation*}\label{H1-hins-u-3}
\begin{split}
\|(\partial_t u + u \cdot \nabla u)\|_{L^2}
&\lesssim \|\partial_t \bar{u}^1\|_{L^2} + \|\partial_t u_{\neq}^1\|_{L^2} + \|u^1\|_{L^{\infty}} \|\partial_1 u\|_{L^2} + \|u^2 \partial_2 u\|_{L^2} \\
&\lesssim \|\partial_t \bar{u}^1\|_{L^2} + \|\partial_t u_{\neq}^1\|_{L^2}  + \bigl(\|\bar{u}^1\|_{L^{\infty}} + \|u_{\neq}^1\|_{L^{\infty}}\bigr) \|\partial_1 u_{\neq}\|_{L^2} \\
&\quad + \|u_{\neq}^2\|_{L^{\infty}} \|\partial_2 u\|_{L^2}.
\end{split}
\end{equation*}
Using \eqref{est-I-L2-3} and \eqref{eqns-HINS-aver-8ggg}, we obtain
\begin{equation*}\label{eqns-HINS-aver-omega-4aaa}
\begin{split}
&\|\omega_{\neq} \partial_2 u_{\neq}^2 + u_{\neq}^2 \partial_2 \omega_{\neq}\|_{L^2}
   + \|\partial_2(u^2 \omega)\|_{L^2} \lesssim (\|\omega\|_{L^2} + \|\partial_2\omega\|_{L^2})
        \|(\partial_1 u_{\neq}, \partial_1^2 u_{\neq})\|_{L^2},
\end{split}
\end{equation*}
and
\begin{equation}\label{H1-hins-u-4}
\begin{split}
\|(\partial_t u + u \cdot \nabla u)\|_{L^2}
&\lesssim \|\partial_t u_{\neq}^1\|_{L^2} + \|(u, \omega)\|_{L^2} \|(\partial_1 u_{\neq}, \partial_1^2 u_{\neq})\|_{L^2} \\
&\quad + \|(u, \omega)\|_{L^2}^{\frac12}
        \|\partial_1\omega_{\neq}\|_{L^2}^{\frac12} \|\partial_1 u_{\neq}\|_{L^2}.
\end{split}
\end{equation}

Substituting these estimates into \eqref{eqns-HINS-aver-omega-2} yields
\begin{equation}\label{eqns-HINS-aver-omega-5}
\begin{split}
\|\partial_t \overline{\omega}\|_{L^2}
&\lesssim \|\rho_{\neq}\|_{L^{\infty}} \|\partial_t \omega_{\neq}\|_{L^2}
 + \|\nabla\rho\|_{L^{\infty}} \|(u, \omega)\|_{L^2}^{\frac12}
        \|\partial_1\omega_{\neq}\|_{L^2}^{\frac12} \|\partial_1 u_{\neq}\|_{L^2} \\
&\quad + \bigl(\|\nabla\rho\|_{L^{\infty}}(1 + \|(u, \omega)\|_{L^2})
        + \|(\omega, \partial_2\omega)\|_{L^2}\bigr)  \|(\partial_1 u_{\neq}, \partial_1^2 u_{\neq}, \partial_t u_{\neq})\|_{L^2}.
\end{split}
\end{equation}

Using \eqref{eqns-HINS-aver-omega-5}, \eqref{est-I-L2-3}, \eqref{eqns-HINS-aver-8ggg}, and \eqref{H1-hins-u-4}, we obtain
\begin{equation*}\label{est-H1-oscillation-omega-10}
\begin{split}
\|G_1\|_{L^2} + \|G_4\|_{L^2}
&\lesssim \|\rho_{\neq}\|_{L^{\infty}} \|\partial_t \omega_{\neq}\|_{L^2}
+ \|\nabla\rho\|_{L^{\infty}} \|(u, \omega)\|_{L^2}^{\frac12}
        \|\partial_1\omega_{\neq}\|_{L^2}^{\frac12} \|\partial_1 u_{\neq}\|_{L^2} \\
&\quad + \bigl(\|\nabla\rho\|_{L^{\infty}}(1 + \|(u, \omega)\|_{L^2})
        + \|(\omega, \partial_2\omega)\|_{L^2}\bigr) \\
&\qquad\quad \times \|(\partial_1 u_{\neq}, \partial_1^2 u_{\neq}, \partial_t u_{\neq})\|_{L^2},
\end{split}
\end{equation*}
and
\begin{equation*}\label{est-H1-oscillation-omega-8}
\begin{split}
\|G_2\|_{L^2} + \|G_3\|_{L^2}
&\lesssim \|\partial_2\omega\|_{L^2} \|(\partial_1 u_{\neq}, \partial_1^2 u_{\neq})\|_{L^2} \\
&\quad + (\|\rho_{\neq}\|_{L^{\infty}} + \|(u, \omega)\|_{L^2})
        \|(\partial_1\omega_{\neq}, \partial_1^2\omega_{\neq}, \partial_t \omega_{\neq})\|_{L^2}.
\end{split}
\end{equation*}
Thus,
\begin{equation}\label{H1-hins-est-H-1}
\begin{split}
\sum_{j=1}^4 \|G_j\|_{L^2}
&\lesssim (\|\rho_{\neq}\|_{L^{\infty}} + \|(u, \omega)\|_{L^2})
        \|(\partial_1\omega_{\neq}, \partial_1^2\omega_{\neq}, \partial_t \omega_{\neq})\|_{L^2} \\
&\quad + \bigl(\|\nabla\rho\|_{L^{\infty}}(1 + \|(u, \omega)\|_{L^2})
        + \|(\omega, \partial_2\omega)\|_{L^2}\bigr)  \|(\partial_1 u_{\neq}, \partial_1^2 u_{\neq}, \partial_t u_{\neq})\|_{L^2} \\
&\quad + \|\nabla\rho\|_{L^{\infty}} \|(u, \omega)\|_{L^2}^{\frac12}
        \|\partial_1\omega_{\neq}\|_{L^2}^{\frac12} \|\partial_1 u_{\neq}\|_{L^2}.
\end{split}
\end{equation}

Hence, inserting \eqref{H1-hins-est-H-1} into \eqref{est-L2-oscillation-t-omega-6} yields
\begin{equation}\label{est-L2-oscillation-t-omega-7}
\begin{split}
\frac{d}{dt}&\|(\bar{\rho}^{\frac12} \omega_{\neq}, \partial_1\omega_{\neq})(t)\|_{L^2}^2
           + \mathfrak{e}_2\|(\bar{\rho}^{\frac12} \omega_{\neq}, \partial_1\omega_{\neq},
                    \partial_t\omega_{\neq}, \partial_1^2\omega_{\neq})\|_{L^2}^2 \\
&\lesssim (1 + \|(u, \omega)\|_{L^2})
        (\|\rho_{\neq}\|_{L^{\infty}}^2 + \|(u, \omega)\|_{L^2}^2)
        \|(\partial_1\omega_{\neq}, \partial_t\omega_{\neq}, \partial_1^2\omega_{\neq})\|_{L^2}^2 \\
&\quad + (1 + \|(u, \omega)\|_{L^2})
        \|(u, \omega, \nabla\omega)\|_{L^2}^2 \mathcal{D}_{1, \neq} \\
&\quad + \bigl(1 + \|(u, \omega)\|_{L^2}^3 + \|(u, \omega, \nabla\omega)\|_{L^2}^2  + \|(u, \omega)\|_{L^2} \|(u, \omega, \nabla\omega)\|_{L^2}^2\bigr)
        \|\nabla\rho\|_{L^{\infty}}^2 \mathcal{D}_{1, \neq},
\end{split}
\end{equation}
where $\mathcal{D}_{1, \neq}$ is given by \eqref{def-H1-u-neq-1}.

Define
\begin{equation}\label{est-L2-oscillation-t-omega-8}
\begin{split}
\mathcal{E}_{2, \neq} (t)&\eqdefa \|(\bar{\rho}^{\frac12} (u_{\neq}, \omega_{\neq}),
                                 \partial_1 u_{\neq}, \partial_1 \omega_{\neq})(t)\|_{L^2}^2, \\
\mathcal{D}_{2, \neq}(t) &\eqdefa \|(\bar{\rho}^{\frac12} (u_{\neq}, \omega_{\neq}),
                                 \partial_1 u_{\neq}, \partial_1 \omega_{\neq},
                                 \partial_t u_{\neq}, \partial_1^2 u_{\neq},
                                 \nabla \Pi_{\neq}, \partial_t\omega_{\neq},
                                 \partial_1^2\omega_{\neq})(t)\|_{L^2}^2.
\end{split}
\end{equation}
Combining \eqref{est-L2-oscillation-t-omega-7} with \eqref{est-Hins-H1-u-neq-15} gives
\begin{equation*}
\begin{split}
&\frac{d}{dt} \mathcal{E}_{2, \neq}(t)+ 2\mathfrak{e}_3\mathcal{D}_{2, \neq} \\
&\leq C_4 \bigl(1 + \|\rho_{\neq}\|_{L^{\infty}} + \|(u, \omega)\|_{L^2}\bigr)
        \bigl(\|\rho_{\neq}\|_{L^{\infty}} + \|(u, \omega)\|_{L^2}\bigr)
        \|(\partial_1\omega_{\neq}, \partial_t\omega_{\neq}, \partial_1^2\omega_{\neq})\|_{L^2}^2 \\
&\quad + C_4  \bigl(1 + \|(u, \omega)\|_{L^2}\bigr)
        \|(u, \omega, \nabla\omega)\|_{L^2}^2 \mathcal{D}_{1, \neq} \\
&\quad + C_4  \bigl(1 + \|(u, \omega)\|_{L^2}^3
        + \|(u, \omega, \nabla\omega)\|_{L^2}^2 + \|(u, \omega)\|_{L^2} \|(u, \omega, \nabla\omega)\|_{L^2}^2\bigr)
        \|\nabla\rho\|_{L^{\infty}}^2 \mathcal{D}_{1, \neq}. 
\end{split}
\end{equation*}

\noindent\textbf{$\bullet$ The exponential decay of $\|(\omega_{\neq}, \partial_1 \omega_{\neq})\|_{L^2}$}
\smallskip

If  \eqref{condition-smallness-sol-1}, \eqref{conditions-sol-3} and 
 \begin{equation}\label{assumption-delta0-11}
  C_4  (1 + \delta_0)\delta_0 \leq \mathfrak{e}_3
 \end{equation}
 hold,  then
\begin{equation}\label{est-L2-oscillation-t-omega-9}
\begin{split}
\frac{d}{dt} \mathcal{E}_{2, \neq}(t) + \mathfrak{e}_3\mathcal{D}_{2, \neq}
&\leq C_{5} \|(u, \omega, \nabla\omega)\|_{L^2}^2 \mathcal{D}_{1, \neq} \\
&\quad + C_{5} (1 + \|(u, \omega, \nabla\omega)\|_{L^2})^2
        \|\nabla\rho_0\|_{L^{\infty}}^2 \, e^{c_1 t} \mathcal{D}_{1, \neq}.
\end{split}
\end{equation}
Since $\mathcal{E}_{2, \neq} (t)\lesssim \mathcal{D}_{2, \neq}(t)$, the inequality \eqref{est-L2-oscillation-t-omega-9} implies that, there exists a positive constant $c_2 \leq c_1$ such that
\begin{equation}\label{est-L2-oscillation-t-omega-11}
\frac{d}{dt} \mathcal{E}_{2, \neq}(t) + 3c_{2}\mathcal{E}_{2, \neq} \leq \frak{f}(t)e^{c_1 t} \mathcal{D}_{1, \neq},
\end{equation}
with
\begin{equation*}\label{est-L2-oscillation-t-omega-10-cont-2}
\begin{split}
\frak{f}(t)
&\eqdefa C_{5} \bigl(\|(u, \omega, \nabla\omega)\|_{L^{\infty}_t(L^{2})} + (1 + \|(u, \omega, \nabla\omega)\|_{L^{\infty}_t(L^{2})}) \|\nabla\rho_0\|_{L^{\infty}}\bigr)^2,
\end{split}
\end{equation*}
which implies
\begin{equation*}\label{est-L2-oscillation-t-omega-12}
\frac{d}{dt}\bigl(e^{3c_2 t}\mathcal{E}_{2, \neq}(t)\bigr)
\leq  \frak{f}(t)e^{(c_1 + 3c_{2}) t} \mathcal{D}_{1, \neq}.
\end{equation*}
Together with \eqref{est-Hins-H1-u-neq-19} and \eqref{conditions-sol-3}, this yields
\begin{equation}\label{est-L2-oscillation-t-omega-14}
\begin{split}
\mathcal{E}_{2, \neq}(t)
\leq &e^{-3c_{2} t} \Bigl(\mathcal{E}_{2,\neq}(0)
        + \frak{f}(t) \int_{0}^{t} e^{4c_{1} \tau} \mathcal{D}_{1, \neq}(\tau) \,d\tau\Bigr)  \leq  e^{-3c_{2} t} \widetilde{\mathcal{E}}_{2,\neq, t},
\end{split}
\end{equation}
where
\begin{equation}\label{est-L2-oscillation-t-omega-15}
\begin{split}
\widetilde{\mathcal{E}}_{2,\neq, t}
&\eqdefa \mathcal{E}_{2,\neq, 0}
      + \bigl(1 + \|\nabla\rho_0\|_{L^{\infty}}^2\bigr) \|u_{0}\|_{H^1}^2\bigl(1+
        \|(u, \omega, \nabla\omega)\|_{L^{\infty}_t(L^{2})}^2\bigr).
\end{split}
\end{equation}
From \eqref{est-L2-oscillation-t-omega-11} and \eqref{est-L2-oscillation-t-omega-14}, we get, by a similar derivation of \eqref{est-Hins-H1-u-neq-19} that  for all $T <T^\ast$,
\begin{equation*}\label{est-L2-oscillation-t-omega-16}
\int_0^{T} e^{2c_{2} t} \mathcal{D}_{2, \neq}(t) \,dt \lesssim \widetilde{\mathcal{E}}_{2,\neq, T},
\end{equation*}
which together with \eqref{est-L2-oscillation-t-omega-14} ensures \eqref{est-Hins-expo-2}.
We thus complete the proof of  Proposition \ref{prop-HINS-H2-neq-2}.
\end{proof}

\subsection{Proof of Theorem \ref{thm-GWP-HINS}}

\begin{proof}[Proof of Theorem \ref{thm-GWP-HINS}]
To prove the global well-posedness of system \eqref{eqns-HINS-1}, it suffices to establish the necessary {\it a priori } estimates for solutions of \eqref{eqns-HINS-1}. Toward this, we assume that \eqref{assumption-delta0-11} holds and define
\beq \label{S4eq1}
T^\star_1\eqdefa \sup\bigl\{T<T^\ast, \ \ \mbox{so that there hold} \ \  \eqref{condition-smallness-sol-1} \andf \eqref{conditions-sol-3}\ \bigr\}.
\eeq

\noindent\textbf{$\bullet$ The estimate of $\|u\|_{L^2}$.}\smallskip

Taking the $L^2$ inner product of the $u$ equations in \eqref{eqns-HINS-1} with $u$ and integrating by parts yields
\begin{equation}\label{L2-hins-u-1aaa}
\frac{1}{2}\frac{d}{dt}\|\rho^{\frac12} u(t)\|_{L^2}^2 + \|\partial_1 u\|_{L^2}^2 = 0,
\end{equation}
which implies
\begin{equation*}\label{L2-hins-u-1}
\|\rho^{\frac12} u\|_{L^\infty_t(L^2)}^2
+ 2\int_0^t \|\partial_1 u(\tau)\|_{L^2}^2 \,d\tau \leq  \|\rho_0^{\frac12} u_0\|_{L^2}^2.
\end{equation*}

\noindent\textbf{$\bullet$ The estimate of $\|\partial_1 u\|_{L^2}$.} \smallskip

Taking the $L^2$ inner product of the $u$ equations in \eqref{eqns-HINS-1} with $\partial_t u$ and integrating by parts gives
\begin{equation*}\label{L2-hins-p1u-1}
\frac{1}{2}\frac{d}{dt}\|\partial_1 u(t)\|_{L^2}^2 + \|\rho^{\frac12} \partial_t u\|_{L^2}^2
= -\int_{\Om} \r u \cdot \nabla u \cdot \partial_t u \,dx,
\end{equation*}
so that
\begin{equation*} 
\f12\frac{d}{dt}\|\partial_1 u(t)\|_{L^2}^2 + \|\rho^{\frac12} \partial_t u\|_{L^2}^2
\lesssim \|u \cdot \nabla u\|_{L^2} \|\partial_t u\|_{L^2}.
\end{equation*}
Using the $u$ equations in \eqref{eqns-HINS-1} again, we obtain that, for some positive constant $\mathfrak{e}_4$,
\begin{equation*} 
\frac{d}{dt}\|\partial_1 u(t)\|_{L^2}^2 + \mathfrak{e}_4\|(\partial_t u, \partial_1 u, \nabla \Pi)\|_{L^2}^2
\lesssim \|u \cdot \nabla u\|_{L^2}^2.
\end{equation*}
Note that
\begin{equation*} 
\begin{split}
\|u \cdot \nabla u\|_{L^2}
&\lesssim \|u^1\|_{L^{2}_x(L^{\infty}_y)} \|\partial_1 u_{\neq}\|_{L^{2}_y(L^{\infty}_x)}
        + \|u_{\neq}^2\|_{L^{\infty}} \|\partial_2 u\|_{L^2} \\
&\lesssim \|u\|_{L^{2}}^{\frac12} \|\omega\|_{L^{2}}^{\frac12} \|\partial_1^2 u_{\neq}\|_{L^{2}}
        + \|\partial_2 u\|_{L^2} \|(\partial_1 u_{\neq}, \partial_1^2 u_{\neq})\|_{L^{2}}.
\end{split}
\end{equation*}
Thus, for $\mathcal{D}_{1, \neq}$ given by \eqref{def-H1-u-neq-1}, we obtain
\begin{equation}\label{L2-hins-p1u-5}
\frac{d}{dt}\|\partial_1 u(t)\|_{L^2}^2 +  \mathfrak{e}_4 \|(\partial_t u, \partial_1 u, \nabla \Pi)\|_{L^2}^2
\lesssim \|(u, \omega)\|_{L^2}^2 \mathcal{D}_{1, \neq}.
\end{equation}

\noindent\textbf{$\bullet$ The  estimate of $\|\omega\|_{L^2}$.}\smallskip

Taking the $L^2$ inner product of the $\omega$ equation \eqref{eqns-HINS-omega-2} with $\omega$ and integrating by parts gives
\begin{equation*}\label{H1-hins-u-1}
\frac{1}{2}\frac{d}{dt}\|\rho^{\frac12} \omega(t)\|_{L^2}^2 + \|\partial_1\omega\|_{L^2}^2
= -\int_{\Om} \omega \, \bigl(\partial_t u + u \cdot \nabla u\bigr) \cdot \nabla^{\perp}\rho \,dx,
\end{equation*}
so that
\begin{equation*}\label{H1-hins-u-2}
\frac{d}{dt}\|\rho^{\frac12} \omega(t)\|_{L^2}^2 + 2\|\partial_1\omega\|_{L^2}^2
\lesssim \|\omega\|_{L^2} \|(\partial_t u + u \cdot \nabla u)\|_{L^2} \|\nabla\rho\|_{L^{\infty}}.
\end{equation*}
Using \eqref{H1-hins-u-4}, we find
\begin{equation*}\label{H1-hins-u-5}
\begin{split}
\frac{d}{dt}\|\rho^{\frac12} \omega(t)\|_{L^2}^2 + 2\|\partial_1\omega\|_{L^2}^2
&\lesssim \|\nabla\rho\|_{L^{\infty}} \|\omega\|_{L^2} \|\partial_t u_{\neq}^1\|_{L^2} \\
&\quad + \|\nabla\rho\|_{L^{\infty}} \|(u, \omega)\|_{L^2}^2 \|(\partial_1 u_{\neq}, \partial_1^2 u_{\neq})\|_{L^2} \\
&\quad + \|\nabla\rho\|_{L^{\infty}} \|(u, \omega)\|_{L^2}^{\frac32}
        \|\partial_1\omega\|_{L^2}^{\frac12} \|\partial_1 u_{\neq}\|_{L^2}.
\end{split}
\end{equation*}
Applying Young's inequality yields
\begin{equation}\label{H1-hins-u-6}
\begin{split}
\frac{d}{dt}\|\rho^{\frac12} \omega(t)\|_{L^2}^2 + \frac{3}{2}\|\partial_1\omega\|_{L^2}^2
&\lesssim \|\nabla\rho\|_{L^{\infty}} \|\omega\|_{L^2} \|\partial_t u_{\neq}^1\|_{L^2} \\
&\quad + \|\nabla\rho\|_{L^{\infty}} \|(u, \omega)\|_{L^2}^2 \|(\partial_1 u_{\neq}, \partial_1^2 u_{\neq})\|_{L^2} \\
&\quad + \|\nabla\rho\|_{L^{\infty}}^{\frac43} \|(u, \omega)\|_{L^2}^2 \|\partial_1 u_{\neq}\|_{L^2}^{\frac43}.
\end{split}
\end{equation}

\noindent\textbf{$\bullet$ The  Estimate of $\|(u, \omega)\|_{L^2}$.}\smallskip

Combining \eqref{H1-hins-u-6} with \eqref{L2-hins-u-1aaa} and \eqref{L2-hins-p1u-5} gives
\begin{equation}\label{H1-hins-u-7}
\begin{split}
\frac{d}{dt}&\|(\rho^{\frac12} u, \rho^{\frac12} \omega, \partial_1 u)(t)\|_{L^2}^2
           +  \mathfrak{e}_5\|(\partial_1 u, \partial_1\omega, \partial_1^2 u, \partial_t u)\|_{L^2}^2 \\
&\lesssim \|\nabla\rho\|_{L^{\infty}} \|(u, \omega)\|_{L^2}
          \bigl(1 + \|(u, \omega)\|_{L^2}\bigr) \mathcal{D}_{1, \neq}^{\frac12}  + \|(u, \omega)\|_{L^2}^2\bigl( \mathcal{D}_{1, \neq} + \|\nabla\rho\|_{L^{\infty}}^{\frac43} \mathcal{E}_{1, \neq}^{\frac23}\bigr).
\end{split}
\end{equation}
for $\mathcal{E}_{1, \neq}$ and $\mathcal{D}_{1, \neq}^{\frac12} $ given by \eqref{def-H1-u-neq-1}.

Applying Gronwall's inequality yields
\begin{equation}\label{H1-hins-u-8a}
\begin{split}
&\|(\rho^{\frac12} u, \rho^{\frac12} \omega, \partial_1 u)\|_{L^\infty_t(L^2)}^2
           + \mathfrak{e}_5\|(\partial_1 u, \partial_1\omega, \partial_1^2 u, \partial_t u)\|_{L^2_t(L^2)}^2\\
&\leq\Bigl( \|(\rho_0^{\frac12} (u_0, \omega_0), \p_1u_0)\|_{L^2}^2 + C \int_0^t\|\nabla\rho\|_{L^{\infty}} \mathcal{D}_{1, \neq}^{\frac12} \,d\tau\Bigr) \\
&\qquad\times\exp\Bigl(C \int_0^t\bigl( \|\nabla\rho\|_{L^{\infty}} \mathcal{D}_{1, \neq}^{\frac12}
     +\mathcal{D}_{1, \neq} 
             + \|\nabla\rho\|_{L^{\infty}}^{\frac43} \mathcal{E}_{1, \neq}^{\frac23}\bigr) \,d\tau\Bigr).
\end{split}
\end{equation}
Using \eqref{est-Hins-H1-u-neq-18}, \eqref{est-Hins-H1-u-neq-19}, and assumption \eqref{conditions-sol-3}, we get
\begin{equation*} 
\begin{split}
&\|(\rho^{\frac12} u, \rho^{\frac12} \omega, \partial_1 u)\|_{L^\infty_t(L^2)}^2
           +  \mathfrak{e}_5\|(\partial_1 u, \partial_1\omega, \partial_1^2 u, \partial_t u)\|_{L^2_t(L^2)}^2\\
         &  \leq C \|u_0\|_{H^1} (\|u_0\|_{H^1} + \|\nabla\rho_0\|_{L^{\infty}} )\exp\Bigl(C(1 + \|\nabla\rho_0\|_{L^{\infty}})^2 \|u_0\|_{H^1}^{2} \Bigr).
         \end{split}
\end{equation*}
Set
\begin{equation}\label{small-init-u0-1}
\epsilon_0 \eqdefa \|u_{0}\|_{H^1} (1 + \|\nabla\rho_0\|_{L^{\infty}\cap\dot{H}^1})
\end{equation}
and assume
\begin{equation}\label{small-init-u0-2}
\epsilon_0 \leq 1,
\end{equation}
so one has
\begin{equation}\label{H1-hins-u-9}
\begin{split}
&\|(u, \omega)\|_{L^\infty_t(L^2)}^2
           +  \|(\partial_1 u, \partial_1\omega, \partial_1^2 u, \partial_t u)\|_{L^2_t(L^2)}^2 \leq C_6 \epsilon_0.
         \end{split}
\end{equation}

\noindent\textbf{$\bullet$ The estimate of $\|\nabla \omega\|_{L^2}$.}\smallskip

Taking the $L^2$ inner product of the $\omega$ equation \eqref{eqns-HINS-omega-2} with $\partial_t\omega$ and integrating by parts gives
\begin{equation*}\label{H2-hins-u-1}
\begin{split}
\frac{1}{2}\frac{d}{dt}\|\partial_1\omega(t)\|_{L^2}^2 + \|\rho^{\frac12} \partial_t\omega\|_{L^2}^2
&= -\int_{\Om} \partial_t\omega \, \bigl(\partial_t u + u \cdot \nabla u\bigr) \cdot \nabla^{\perp}\rho \,dx  - \int_{\Om} \partial_t\omega \, \rho u \cdot \nabla \omega \,dx.
\end{split}
\end{equation*}
Together with \eqref{eqns-HINS-omega-2}, this yields
\begin{equation*}\label{H2-hins-u-2}
\begin{split}
\frac{d}{dt}\|\partial_1\omega(t)\|_{L^2}^2 &+ \mathfrak{e}_6\|(\partial_t\omega, \partial_1^2\omega)\|_{L^2}^2
\lesssim \|\partial_t\omega\|_{L^2} \|(\partial_t u + u \cdot \nabla u)\|_{L^2} \|\nabla^{\perp}\rho\|_{L^{\infty}} \\
&\quad + \|\partial_t\omega\|_{L^2} \|u \cdot \nabla \omega\|_{L^2} + \|u \cdot \nabla \omega\|_{L^2}^2 + \|(\partial_t u + u \cdot \nabla u)\|_{L^2}^2 \|\nabla^{\perp}\rho\|_{L^{\infty}}^2.
\end{split}
\end{equation*}
Applying Young's inequality leads to
\begin{equation}\label{H2-hins-u-3}
\begin{split}
\frac{d}{dt}&\|\partial_1\omega(t)\|_{L^2}^2 + \f{\mathfrak{e}_6}2\|(\partial_t\omega, \partial_1^2\omega)\|_{L^2}^2 \\
&\lesssim \|(\partial_t u + u \cdot \nabla u)\|_{L^2}^2 \|\nabla\rho\|_{L^{\infty}}^2 + \|u \cdot \nabla \omega\|_{L^2}^2.
\end{split}
\end{equation}

Applying $\partial_2$ to the $\omega$ equation \eqref{eqns-HINS-omega-2} yields
\begin{equation}\label{eqns-HINS-partial2-omega-1}
\begin{split}
&\rho\bigl(\partial_t \partial_2\omega + u \cdot \nabla \partial_2\omega\bigr) - \partial_1^2\partial_2\omega= -\partial_2\bigl(\partial_t u + u \cdot \nabla u\bigr) \cdot \nabla^{\perp}\rho \\
&\quad - \bigl(\partial_t u + u \cdot \nabla u\bigr) \cdot \nabla^{\perp}\partial_2\rho  - \rho \partial_2 u \cdot \nabla \omega - \partial_2\rho \bigl(\partial_t\omega + u \cdot \nabla \omega\bigr).
\end{split}
\end{equation}
Taking the $L^2$ inner product of \eqref{eqns-HINS-partial2-omega-1} with $\partial_2\omega$ and integrating by parts gives
\begin{equation}\label{est-partial2-omega-2}
\frac{1}{2}\frac{d}{dt}\|\rho^{\frac12} \partial_2\omega(t)\|_{L^2}^2 + \|\partial_1\partial_2\omega\|_{L^2}^2 = \sum_{j=1}^4 IV_j,
\end{equation}
with
\begin{align*}
IV_1 &\eqdefa -\int_{\Om} \partial_2\omega \, \partial_2\bigl(\partial_t u + u \cdot \nabla u\bigr) \cdot \nabla^{\perp}\rho \,dx, \quad IV_2 \eqdefa -\int_{\Om} \partial_2\omega \, \rho \partial_2 u \cdot \nabla \omega \,dx, \\
IV_3 &\eqdefa -\int_{\Om} \partial_2\omega \, \bigl(\partial_t u + u \cdot \nabla u\bigr) \cdot \nabla^{\perp}\partial_2\rho \,dx, \quad IV_4 \eqdefa -\int_{\Om} \partial_2\omega \, \partial_2\rho \bigl(\partial_t\omega + u \cdot \nabla \omega\bigr) \,dx.
\end{align*}
Note that
\begin{equation*}\label{est-partial2-omega-3}
\begin{split}
|IV_1| + |IV_4| &\lesssim \|\partial_2\omega\|_{L^2} \bigl(\|\partial_t \omega\|_{L^2} + \|u \cdot \nabla\partial_2 u\|_{L^2} \\
&\quad + \|(\partial_2 u \cdot \nabla) u\|_{L^2} + \|u \cdot \nabla \omega\|_{L^2}\bigr) \|\nabla\rho\|_{L^{\infty}},
\end{split}
\end{equation*}
\begin{equation*}\label{est-partial2-omega-4}
\begin{split}
|IV_3| &\lesssim \|\partial_2\omega\|_{L^2_y(L^{\infty}_x)} \|(\partial_t u + u \cdot \nabla u)\|_{L^2_x (L^{\infty}_y)} \|\nabla\partial_2\rho\|_{L^2} \\
&\lesssim (\|\partial_2\omega\|_{L^2} + \|\partial_1\partial_2\omega\|_{L^2}) \bigl(\|(\partial_t u + u \cdot \nabla u)\|_{L^2} \\
&\qquad + \|\partial_t \omega\|_{L^2} + \|u \cdot \nabla \partial_2 u\|_{L^2} + \|(\partial_2 u \cdot \nabla) u\|_{L^2}\bigr) \|\nabla^2\rho\|_{L^2},
\end{split}
\end{equation*}
and
\begin{equation*}\label{est-partial2-omega-5}
\begin{split}
|IV_2| &\lesssim \|\partial_2\omega\|_{L^2_y(L^{\infty}_x)} \|\partial_2 u \cdot \nabla
 \omega\|_{L^1_x(L^{2}_y)} \\
&\lesssim \bigl(\|\partial_2\omega\|_{L^2} + \|\partial_1\partial_2\omega\|_{L^2}\bigr) \|\partial_2 u \cdot \nabla \omega\|_{L^1_x(L^{2}_y)}.
\end{split}
\end{equation*}
Then from \eqref{H2-hins-u-3}, \eqref{est-partial2-omega-2} and Young's inequality, we obtain
\begin{equation}\label{est-partial2-omega-5aaa}
\begin{split}
&\frac{d}{dt}\|(\partial_1\omega, \rho^{\frac12} \partial_2\omega)(t)\|_{L^2}^2
           + \mathfrak{e}_7 \|(\partial_t\omega, \partial_1\omega, \partial_1\partial_2\omega)\|_{L^2}^2 \\
&\lesssim \|\nabla\rho\|_{L^{\infty}\cap\dot{H}^1} \|\partial_2\omega\|_{L^2} \, \mathfrak{m}(t)
         + \mathfrak{m}^2(t) \|\nabla\rho\|_{L^{\infty}\cap\dot{H}^1}^2 \\
&\quad + \|\partial_2\omega\|_{L^2} \|\partial_2 u \cdot \nabla \omega\|_{L^1_x(L^{2}_y)}
         + \|\partial_2 u \cdot \nabla \omega\|_{L^1_x(L^{2}_y)}^2+ \|u \cdot \nabla \omega\|_{L^2}^2,
\end{split}
\end{equation}
where
\begin{equation}\label{est-partial2-omega-5bbb}
\begin{split}
\mathfrak{m}(t) &\eqdefa \|\partial_t \omega\|_{L^2} + \|u \cdot \nabla \partial_2 u\|_{L^2}
                 + \|\partial_2 u \cdot \nabla u\|_{L^2}  + \|u \cdot \nabla \omega\|_{L^2}
                 + \|\partial_t u + u \cdot \nabla u\|_{L^2}.
\end{split}
\end{equation}

\smallskip
\noindent\textbf{$\bullet$ The estimates of $\mathfrak{m}(t)$.}
\smallskip

For $\|u \cdot \nabla \omega\|_{L^2}$, we have
\begin{equation*} 
\begin{split}
\|u \cdot \nabla \omega\|_{L^2}
&\lesssim \|u^1 \partial_1 \omega\|_{L^2} + \|u^2 \partial_2 \omega\|_{L^2}  \lesssim \|u^1\|_{L^{\infty}} \|\partial_1 \omega_{\neq}\|_{L^2}
        + \|u_{\neq}^2\|_{L^{\infty}} \|\partial_2 \omega\|_{L^2}.
\end{split}
\end{equation*}
Using \eqref{est-I-L2-3}, this implies
\begin{equation}\label{est-partial2-omega-8}
\begin{split}
\|u \cdot \nabla \omega\|_{L^2}
&\lesssim \|(u, \omega)\|_{L^2} \|\partial_1 \omega_{\neq}\|_{L^2}
        + \|\partial_1 u_{\neq}^1\|_{L^2}^{\frac12} \|\partial_1\omega_{\neq}\|_{L^2}^{\frac32} \\
&\qquad + \|\partial_1 u_{\neq}^2\|_{L^2}^{\frac12} \|\partial_1^2 u_{\neq}^1\|_{L^2}^{\frac12} \|\partial_2 \omega\|_{L^2}.
\end{split}
\end{equation}
The quantity $\|(u \cdot \nabla) \partial_2 u\|_{L^2}$ admits the same estimate as \eqref{est-partial2-omega-8}.

For $\|(\partial_2 u \cdot \nabla) u\|_{L^2}$, we have
\begin{equation*}\label{est-partial2-omega-9}
\begin{split}
\|(\partial_2 u \cdot \nabla) u\|_{L^2}
&\lesssim \|\partial_2 u^1 \partial_1 u\|_{L^2} + \|\partial_2 u^2 \partial_2 u\|_{L^2} \\
&\lesssim \|\partial_2 u\|_{L^2_x(L^{\infty}_y)} \|\partial_1 u_{\neq}\|_{L^2_y(L^{\infty}_x)}  \lesssim \|\omega\|_{L^2}^{\frac12} \|\partial_2\omega\|_{L^2}^{\frac12} \|\partial_1^2 u_{\neq}\|_{L^2}.
\end{split}
\end{equation*}
Thus,
\begin{equation}\label{est-partial2-omega-10}
\begin{split}
\|u \cdot \nabla \partial_2 u\|_{L^2} &+ \|(\partial_2 u \cdot \nabla) u\|_{L^2} + \|u \cdot \nabla \omega\|_{L^2} \\
&\lesssim \|(u, \omega)\|_{L^2} \|\partial_1 \omega_{\neq}\|_{L^2} + \|\partial_1 u_{\neq}^1\|_{L^2}^{\frac12} \|\partial_1\omega_{\neq}\|_{L^2}^{\frac32} \\
&\quad + \|\partial_1 u_{\neq}^2\|_{L^2}^{\frac12} \|\partial_1^2 u_{\neq}^1\|_{L^2}^{\frac12} \|\partial_2 \omega\|_{L^2}  + \|\omega\|_{L^2}^{\frac12} \|\partial_2\omega\|_{L^2}^{\frac12} \|\partial_1^2 u_{\neq}\|_{L^2}.
\end{split}
\end{equation}

For $\|\partial_t \omega\|_{L^2}$, using \eqref{eqns-HINS-aver-omega-5} gives
\begin{equation}\label{est-partial2-omega-13}
\begin{split}
\|\partial_t \omega\|_{L^2}
&\lesssim \|\partial_t \omega_{\neq}\|_{L^2} + \|\partial_t \overline{\omega}\|_{L^2} \\
&\lesssim \|\partial_t \omega_{\neq}\|_{L^2}
        + (\|\omega\|_{L^2}^{\frac12} \|\partial_2\omega\|_{L^2}^{\frac12} + \|\partial_2\omega\|_{L^2})
          \|(\partial_1 u_{\neq}, \partial_1^2 u_{\neq})\|_{L^2} \\
&\quad + (1 + \|(u, \omega)\|_{L^2})
        \|(\partial_1 u_{\neq}, \partial_1^2 u_{\neq}, \partial_t u_{\neq}^1)\|_{L^2} \|\nabla\rho\|_{L^{\infty}} \\
&\quad + \|(u, \omega)\|_{L^2}^{\frac12} \|\partial_1\omega_{\neq}\|_{L^2}^{\frac12}
        \|\partial_1 u_{\neq}\|_{L^2} \|\nabla\rho\|_{L^{\infty}}.
\end{split}
\end{equation}
Therefore, from \eqref{H1-hins-u-4}, \eqref{est-partial2-omega-10} and \eqref{est-partial2-omega-13}, we obtain
\begin{equation}\label{est-partial2-omega-14}
\begin{split}
\mathfrak{m}(t)
&\lesssim \|\partial_t \omega_{\neq}\|_{L^2}
        + \|(u, \omega)\|_{L^2} \mathcal{E}_{2, \neq}^{\frac12}
        + \mathcal{E}_{1, \neq}^{\frac14} \mathcal{E}_{2, \neq}^{\frac34}  + \|(u, \omega)\|_{L^2}^{\frac12} \|\nabla\rho\|_{L^{\infty}}
        \mathcal{E}_{1, \neq}^{\frac12} \mathcal{E}_{2, \neq}^{\frac14} \\
&\quad + \|\omega\|_{L^2}^{\frac12} \|\partial_2\omega\|_{L^2}^{\frac12} \mathcal{D}_{1, \neq}^{\frac12}
        + \|\partial_2\omega\|_{L^2} \mathcal{D}_{1, \neq}^{\frac12}  + (1 + \|(u, \omega)\|_{L^2}) \|\nabla\rho\|_{L^{\infty}} \mathcal{D}_{1, \neq}^{\frac12}.
\end{split}
\end{equation}
For $\|\partial_2 u \cdot \nabla \omega\|_{L^1_x(L^{2}_y)}$, we have
\begin{equation}\label{est-partial2-omega-11}
\begin{split}
\|\partial_2 u \cdot \nabla \omega\|_{L^1_x(L^{2}_y)}
&\lesssim \|\partial_2 u^1\|_{L^2_x(L^{\infty}_y)} \|\partial_1\omega\|_{L^2}
        + \|\partial_2 u^2\|_{L^2_x(L^{\infty}_y)} \|\partial_2\omega\|_{L^2} \\
&\lesssim \|\partial_2 u^1\|_{L^2}^{\frac12} \|\partial_2^2 u^1\|_{L^2}^{\frac12} \|\partial_1\omega_{\neq}\|_{L^2}  + \|\partial_1 u_{\neq}^1\|_{L^2}^{\frac12} \|\partial_1\partial_2 u_{\neq}^1\|_{L^2}^{\frac12} \|\partial_2\omega\|_{L^2} \\
&\lesssim \|\omega\|_{L^2}^{\frac12} \|\partial_2\omega\|_{L^2}^{\frac12} \mathcal{E}_{2, \neq}^{\frac12}.
\end{split}
\end{equation}

Substituting \eqref{est-partial2-omega-14} and \eqref{est-partial2-omega-11} into \eqref{est-partial2-omega-5aaa}, and using \eqref{H1-hins-u-7}, we obtain
\begin{equation}\label{est-partial2-omega-15}
\frac{d}{dt} \mathcal{E}_2(t) + \mathfrak{e}_8 \mathcal{D}_2 \leq C_{7} \sum_{j=1}^5 \mathcal{F}_j
\end{equation}
with
\begin{equation}\label{est-partial2-omega-16}
\begin{split}
\mathcal{E}_2(t) &\eqdefa \|(\rho^{\frac12} u, \rho^{\frac12} \omega, \partial_1 u,
                         \partial_1\omega, \rho^{\frac12} \partial_2\omega)(t)\|_{L^2}^2, \andf\\
\mathcal{D}_2(t) &\eqdefa \|(\partial_1 u, \partial_1\omega, \partial_1^2 u,
                         \partial_t u, \partial_t\omega, \partial_1^2\omega,
                         \partial_1\partial_2\omega)(t)\|_{L^2}^2,
\end{split}
\end{equation}
and
\begin{align*}
\mathcal{F}_1 &\eqdefa \|\nabla\rho\|_{L^{\infty}\cap\dot{H}^1} \Bigl(
        \mathcal{E}_2^{\frac12} \mathcal{D}_{2, \neq}^{\frac12}
        + \mathcal{E}_2^{\frac12} (1 + \mathcal{E}_2^{\frac12}) \mathcal{D}_{1, \neq}^{\frac12}  + \mathcal{E}_2^{\frac12} \bigl(\|(u, \omega)\|_{L^2} \mathcal{E}_{2, \neq}^{\frac12}
              + \mathcal{E}_{1, \neq}^{\frac14} \mathcal{E}_{2, \neq}^{\frac34}\bigr) \Bigr),\\
\mathcal{F}_2 &\eqdefa \|\nabla\rho\|_{L^{\infty}\cap\dot{H}^1}^2 \Bigl(
        \mathcal{D}_{2, \neq} + \mathcal{E}_2^{\frac12} \mathcal{D}_{1, \neq}^{\frac12} + \|(u, \omega)\|_{L^2}^{\frac12} \mathcal{E}_2^{\frac12}
                \mathcal{E}_{1, \neq}^{\frac12} \mathcal{E}_{2, \neq}^{\frac14} + \|(u, \omega)\|_{L^2}^2 \mathcal{E}_{2, \neq}  \\
      &\qquad\qquad\qquad\qquad                + \mathcal{E}_{1, \neq}^{\frac12} \mathcal{E}_{2, \neq}^{\frac32}
                + \mathcal{E}_2 \mathcal{D}_{1, \neq} \Bigr),\\
\mathcal{F}_3 &\eqdefa \|\nabla\rho\|_{L^{\infty}\cap\dot{H}^1}^4
        \bigl(\mathcal{D}_{1, \neq} + \|(u, \omega)\|_{L^2} \mathcal{E}_{1, \neq} \mathcal{E}_{2, \neq}^{\frac12}\bigr), \\
         \mathcal{F}_4  &\eqdefa \bigl(\|\nabla\rho\|_{L^{\infty}}^2 \mathcal{E}_{1, \neq}\bigr)^{\frac23}
        \|(u, \omega)\|_{L^2}^2, \\
\mathcal{F}_5 &\eqdefa \|\omega\|_{L^2}^{\frac12} \mathcal{E}_2^{\frac34} \mathcal{E}_{2, \neq}^{\frac12}
        + \|\omega\|_{L^2} \mathcal{E}_2^{\frac12} \mathcal{E}_{2, \neq}
        + \|(u, \omega)\|_{L^2}^2 \mathcal{D}_{1, \neq}.
\end{align*}

Define
\begin{equation}\label{est-partial2-omega-18}
\mathfrak{E}_2(T) \eqdefa \sup_{t \in [0, T]} \mathcal{E}_2(t) +\mathfrak{e}_8 \int_0^T \mathcal{D}_2(t) \,dt.
\end{equation}
Using  \eqref{est-L2-oscillation-t-omega-15} and \eqref{small-init-u0-1}, we have
\begin{equation}\label{est-partial2-omega-T-18aa}
\begin{split}
\widetilde{\mathcal{E}}_{2,\neq, T}
&\lesssim \mathcal{E}_{2,\neq, 0}
    + \epsilon_0^2 + \epsilon_0^2 \mathfrak{E}_2(T)
                + \|u_{0}\|_{H^1} \mathfrak{E}_2^{\frac12}(T).
\end{split}
\end{equation}
Together with \eqref{est-L2-oscillation-t-omega-14} and \eqref{est-L2-oscillation-t-omega-16}, this implies
\begin{equation}\label{est-L2-oscillation-t-omega-19}
\begin{split}
\sup_{t \in [0,T]} &\bigl(e^{3c_{2} t} \mathcal{E}_{2, \neq}(t)\bigr)
   + \int_0^{T} e^{2c_{2} t} \mathcal{D}_{2, \neq}(t) \,dt \\
   &\leq C_{8} \bigl(\mathcal{E}_{2,\neq, 0}+\epsilon_0^2 + \epsilon_0^2 \mathfrak{E}_2(T)
              + \|u_{0}\|_{H^1} \mathfrak{E}_2^{\frac12}(T)\bigr).
\end{split}
\end{equation}

Let us deine
\begin{equation}\label{conditions-sol-5}
\begin{split}
T^\star_2\eqdefa \sup\bigl\{t<T^\star_1, \  \, \|\nabla\rho\|_{L^\infty_t(L^{\infty}\cap\dot{H}^1)} \leq 2\|\nabla\rho_0\|_{L^{\infty}\cap\dot{H}^1} \, e^{\frac{c_2}{4}t}\ \ \bigr\},
\end{split}
\end{equation}
then from \eqref{est-partial2-omega-15}, we obtain
\begin{equation}\label{est-partial2-omega-16}
\begin{split}
&\mathfrak{E}_2(T)\leq C_{9} \bigg(\mathcal{E}_2(0)+\|\nabla\rho_0\|_{L^{\infty}\cap\dot{H}^1} \, \mathfrak{F}_1(T)+\|\nabla\rho_0\|_{L^{\infty}\cap\dot{H}^1}^2 \, \mathfrak{F}_2(T)\\
&+\|\nabla\rho_0\|_{L^{\infty}\cap\dot{H}^1}^4 \mathfrak{F}_3(T)+\|\nabla\rho_0\|_{L^{\infty}\cap\dot{H}^1}^{\frac{4}{3}} \,\epsilon_0^2 \,\mathfrak{F}_4(T) +\epsilon_0^{\frac{1}{2}}\,\mathfrak{F}_5(T)+ \epsilon_0\,\mathfrak{F}_6(T)+\epsilon_0^2\,\mathfrak{F}_7(T)\bigg),
\end{split}
\end{equation}
where
\begin{align*}
\mathfrak{F}_1(T) \eqdefa& \mathfrak{E}_2^{\frac12} (T)\int_{0}^T e^{\frac{c_{2}}4 t} \mathcal{D}_{2, \neq}^{\frac12}(t) \,dt + \mathfrak{E}_2^{\frac12}(T)\bigl (1 + \mathfrak{E}_2^{\frac12}(T)\bigr) \int_{0}^T e^{\frac{c_{2}}4 t} \mathcal{D}_{1, \neq}^{\frac12}(t) \,dt \\
& + \mathfrak{E}_2^{\frac12}(T) \Bigl( \epsilon_0 \int_{0}^T e^{\frac{c_{2}}4 t} \mathcal{E}_{2, \neq}^{\frac12}(t) \,dt
          + \int_{0}^T e^{\frac{c_{2}}4 t} \mathcal{E}_{1, \neq}^{\frac14}(t) \mathcal{E}_{2, \neq}^{\frac34} (t)\,dt \Bigr),\\
\mathfrak{F}_2(T) \eqdefa & \mathfrak{E}_2^{\frac12}(T) \int_{0}^T e^{\frac{c_{2}}2 t} \mathcal{D}_{1, \neq}^{\frac12}(t) \,dt  + \epsilon_0^{\frac12} \mathfrak{E}_2^{\frac12}(T) \int_{0}^T e^{\frac{c_{2}}4 t}
        \mathcal{E}_{1, \neq}^{\frac12}(t) \mathcal{E}_{2, \neq}^{\frac14} (t)\,dt \\
        &+ \epsilon_0^2 \int_{0}^T e^{\frac{c_{2}}2 t} \mathcal{E}_{2, \neq}(t) \,dt + \int_{0}^T e^{\frac{c_{2}}2 t} \mathcal{E}_{1, \neq}^{\frac12}(t) \mathcal{E}_{2, \neq}^{\frac32}(t) \,dt \\
        &+ \mathfrak{E}_2(T) \int_{0}^T e^{\frac{c_{2}}2 t} \mathcal{D}_{1, \neq}(t) \,dt + \int_{0}^T e^{\frac{c_{2}}2 t} \mathcal{D}_{2, \neq}(t) \,dt,\\
\mathfrak{F}_3(T) \eqdefa & \int_{0}^T e^{c_{2} t} \mathcal{D}_{1, \neq} (t)\,dt
                  + \epsilon_0 \int_{0}^T e^{c_{2} t} \mathcal{E}_{1, \neq} (t)\mathcal{E}_{2, \neq}^{\frac12} (t)\,dt, \\
\mathfrak{F}_4(T) \eqdefa&  \int_{0}^T e^{\frac{c_{2}}3 t} \mathcal{E}_{1, \neq}^{\frac23} (t)\,dt,\qquad
\mathfrak{F}_5(T) \eqdefa  \mathfrak{E}_2^{\frac34}(T) \int_{0}^T \mathcal{E}_{2, \neq}^{\frac12}(t) \,dt, \\
\mathfrak{F}_6(T) \eqdefa&  \mathfrak{E}_2^{\frac12}(T) \int_{0}^T \mathcal{E}_{2, \neq}(t) \,dt, \qquad
\mathfrak{F}_7(T) \eqdefa \int_{0}^T \mathcal{D}_{1, \neq}(t) \,dt.
\end{align*}
Hence,
\begin{equation}\label{est-partial2-omega-22}
\begin{split}
\mathfrak{E}_2(T) &\leq  C_{10} \Bigl(\mathcal{E}_2(0)  +\|u_0\|_{H^1} \|\nabla\rho_0\|_{L^{\infty}\cap\dot{H}^1}
        \bigl(1 + \|u_0\|_{H^1} \|\nabla\rho_0\|_{L^{\infty}\cap\dot{H}^1}\bigr) \mathfrak{E}_2(T) \\
&\quad + \bigl(\mathcal{G}_1 + \mathcal{G}_2 + \mathcal{G}_3\bigr) \mathfrak{E}_2^{\frac12}(T) + \mathcal{G}_4 \mathfrak{E}_2^{\frac34}(T) + \mathcal{G}_5 + \mathcal{G}_6 + \mathcal{G}_7\Bigr),
\end{split}
\end{equation}
where
\begin{align*}
\mathcal{G}_1 &\eqdefa \|\nabla\rho_0\|_{L^{\infty}\cap\dot{H}^1} \bigl(
        \widetilde{\mathcal{E}}_{2,\neq, T}^{\frac12} + \|u_0\|_{H^1}  + \epsilon_0 \widetilde{\mathcal{E}}_{2,\neq, T}^{\frac12}
          + \|u_0\|_{H^1}^{\frac12} \widetilde{\mathcal{E}}_{2,\neq, T}^{\frac34} \bigr), \\
\mathcal{G}_2 &\eqdefa \|\nabla\rho_0\|_{L^{\infty}\cap\dot{H}^1}^2 \bigl(
        \|u_0\|_{H^1} + \epsilon_0^{\frac12} \|u_0\|_{H^1} \widetilde{\mathcal{E}}_{2,\neq, T}^{\frac14} \bigr),\\
\mathcal{G}_3 &\eqdefa \epsilon_0 \widetilde{\mathcal{E}}_{2,\neq, T}, \quad
\mathcal{G}_4 \eqdefa\epsilon_0^{\frac12} \widetilde{\mathcal{E}}_{2,\neq, T}^{\frac12}, \quad \mathcal{G}_5  \eqdefa \|\nabla\rho_0\|_{L^{\infty}\cap\dot{H}^1}^2
        \bigl(1 + \epsilon_0^2 + \|u_0\|_{H^1} \widetilde{\mathcal{E}}_{2,\neq, T}^{\frac12}\bigr)
        \widetilde{\mathcal{E}}_{2,\neq, T}, \\
\mathcal{G}_6 &\eqdefa  \|\nabla\rho_0\|_{L^{\infty}\cap\dot{H}^1}^4
        \|u_0\|_{H^1}^2 \bigl(1 + \epsilon_0 \widetilde{\mathcal{E}}_{2,\neq, T}^{\frac12}\bigr), \quad \mathcal{G}_7  \eqdefa \|\nabla\rho_0\|_{L^{\infty}\cap\dot{H}^1}^{\frac43} \epsilon_0^2 \|u_0\|_{H^1}^{\frac43}
        + \epsilon_0^2 \|u_0\|_{H^1}^2.
\end{align*}

If
\begin{equation}\label{est-partial2-omega-24-condition}
\begin{split}
\|u_0\|_{H^1} \|\nabla\rho_0\|_{L^{\infty}\cap\dot{H}^1}
\bigl(1 + \|u_0\|_{H^1} \|\nabla\rho_0\|_{L^{\infty}\cap\dot{H}^1}\bigr) \leq \frac{1}{3C_{10}},
\end{split}
\end{equation}
then by Young's inequality, we obtain
\begin{equation*}\label{est-partial2-omega-26}
\begin{split}
\mathfrak{E}_2(T) &\leq C_{11} \bigl(\mathcal{E}_2(0) + \mathcal{G}_1^2 + \mathcal{G}_2^2
                  + \mathcal{G}_3^2 + \mathcal{G}_4^4 + \mathcal{G}_5 + \mathcal{G}_6 + \mathcal{G}_7\bigr).
\end{split}
\end{equation*}
Using \eqref{est-L2-oscillation-t-omega-19}, we find
\begin{equation*}\label{est-partial2-omega-27}
\begin{split}
\mathfrak{E}_2(T) &\leq C_{12} \mathfrak{I}_0+ C_{12} \Bigl((\|\nabla\rho_0\|_{L^{\infty}\cap\dot{H}^1}^2 + 1) \epsilon_0^2 \mathfrak{E}_2(T) + \epsilon_0^4 (1 + \|\nabla\rho_0\|_{L^{\infty}\cap\dot{H}^1}^2) \mathfrak{E}^2_2(T)\Bigr),
\end{split}
\end{equation*}
where
\begin{equation}\label{est-partial2-omega-28}
\begin{split}
\mathfrak{I}_0 &\eqdefa \|u_0\|_{H^2}^2 + \|\nabla\rho_0\|_{L^{\infty}\cap\dot{H}^1}^2
        \bigl(\mathcal{E}_{2,\neq, 0} + \mathcal{E}_{2,\neq, 0}^2 + \epsilon_0^2\bigr)  + \|u_0\|_{H^1} \mathcal{E}_{2,\neq, 0}^{\frac32}+ \epsilon_0^2 \mathcal{E}_{2,\neq, 0}^2 \\
    &\quad         + \epsilon_0^3 + \|\nabla\rho_0\|_{L^{\infty}\cap\dot{H}^1}^4
        \bigl(\|u_0\|_{H^1}^2 + \epsilon_0^2 \|u_0\|_{H^1} \mathcal{E}_{2,\neq, 0} + \epsilon_0^2\bigr) + \|\nabla\rho_0\|_{L^{\infty}\cap\dot{H}^1}^8
        \|u_0\|_{H^1}^4 \epsilon_0^4.
\end{split}
\end{equation}
Assume that
\begin{equation}\label{cond-smallness-112}
(1 + \|\nabla\rho_0\|_{L^{\infty}\cap\dot{H}^1}^2) \epsilon_0^2 \leq \frac{1}{2C_{12}}.
\end{equation}
Then
\begin{equation*}\label{est-partial2-omega-29}
\mathfrak{E}_2(T) \leq  C_{13} \frak{K}_0 +  \epsilon_0^2 \mathfrak{E}_2(T)^2,
\end{equation*}
and
\begin{equation}\label{est-partial2-omega-30}
\begin{split}
\frak{K}_0 \eqdefa &\|u_0\|_{H^2}^2
        + \|\nabla\rho_0\|_{L^{\infty}\cap\dot{H}^1}^2
          \bigl(\mathcal{E}_{2,\neq, 0} + \mathcal{E}_{2,\neq, 0}^2 + \epsilon_0^2\bigr) \\
        &+ \|u_0\|_{H^1} \mathcal{E}_{2,\neq, 0}^{\frac32}
          + \epsilon_0^2 \mathcal{E}_{2,\neq, 0}^2  + \epsilon_0^3 + \|\nabla\rho_0\|_{L^{\infty}\cap\dot{H}^1}^4 \epsilon_0^2\,(1+\|u_0\|_{H^1} \mathcal{E}_{2,\neq, 0}).
\end{split}
\end{equation}
Assuming
\begin{equation}\label{cond-smallness-113}
\epsilon_0^2 \mathfrak{K}_0 \leq \frac{1}{4C_{13}},
\end{equation}
we obtain
\begin{equation}\label{est-partial2-omega-32}
\mathfrak{E}_2(T) \leq 2C_{13} \mathfrak{K}_0,\quad \forall\ T\leq T_2^\star.
\end{equation}

\noindent\textbf{$\bullet$ Smallness of $\|\rho_{\neq}\|_{L^{\infty}_t(L^{\infty})}$.}\smallskip

Applying $\mathcal{P}_{\neq}$ to the transport equation for $\rho$ in \eqref{eqns-HINS-1} yields
\begin{equation*}\label{eqns-hins-rho-oscillation-1}
\begin{split}
\partial_t \rho_{\neq} + \bar{u}^1 \partial_1\rho_{\neq}
+ (u_{\neq}^1 \partial_1\rho_{\neq})_{\neq}
+ u_{\neq}^2 \partial_2\bar{\rho}
+ (u_{\neq}^2 \partial_2\rho_{\neq})_{\neq} = 0.
\end{split}
\end{equation*}

Define $\bar{X}(t, x)$ by
\begin{equation*}\label{est-rho-neq-1}
\begin{cases}
\frac{d}{dt}\bar{X}(t, x) = \bar{u}(t, \bar{X}(t, x)), \\[1mm]
\bar{X}(t, x)|_{t=0} = x,
\end{cases}
\end{equation*}
then since $\bar{u}^2 = 0$, we have
\begin{equation*}\label{est-rho-neq-2}
\begin{split}
\frac{d}{dt} \bigl(\rho_{\neq}(t, \bar{X}(t, x))\bigr)
&= - (u_{\neq}^1 \partial_1\rho_{\neq})_{\neq} \circ \bar{X}(t, x)  - (u_{\neq}^2 \partial_2\bar{\rho}) \circ \bar{X}(t, x)  - (u_{\neq}^2 \partial_2\rho_{\neq})_{\neq} \circ \bar{X}(t, x).
\end{split}
\end{equation*}
Hence,
\begin{equation*}\label{est-rho-neq-3}
\begin{split}
\rho_{\neq}(t, \bar{X}(t, x))
&= \rho_{0\neq}
   - \int_0^t (u_{\neq}^1 \partial_1\rho_{\neq})_{\neq} \circ \bar{X}(s, x) \,ds  \\
&\quad - \int_0^t (u_{\neq}^2 \partial_2\bar{\rho}) \circ \bar{X}(s, x) \,ds  - \int_0^t (u_{\neq}^2 \partial_2\rho_{\neq})_{\neq} \circ \bar{X}(s, x) \,ds.
\end{split}
\end{equation*}
This implies
\begin{equation}\label{est-rho-neq-5}
\begin{split}
\|\rho_{\neq}(t)\|_{L^{\infty}}
&\leq \|\rho_{0,\neq}\|_{L^{\infty}}
   + C \int_0^t \|(u_{\neq}^1, u_{\neq}^2)(s)\|_{L^{\infty}} \|\na \rho(s)\|_{L^{\infty}} \,ds.
\end{split}
\end{equation}
Using \eqref{est-I-L2-3}, we obtain
\begin{equation*}\label{est-rho-neq-6aaa}
\begin{split}
\|\rho_{\neq}(t)\|_{L^{\infty}}
&\leq \|\rho_{0,\neq}\|_{L^{\infty}}
   + C \int_0^t \|\partial_1 u_{\neq}(s)\|_{L^2}^{\frac12}
             \|\partial_1\omega_{\neq}(s)\|_{L^2}^{\frac12}
             \|\na \rho(s)\|_{L^{\infty}} \,ds,
\end{split}
\end{equation*}
so that we deduce from \eqref{est-Hins-H1-u-neq-18}, \eqref{conditions-sol-3} and \eqref{est-L2-oscillation-t-omega-14} that
\begin{equation}\label{est-rho-neq-6}
\begin{split}
\|\rho_{\neq}(t)\|_{L^{\infty}}
&\leq \|\rho_{0\neq}\|_{L^{\infty}}
   + C  \int_0^t \mathcal{E}_{1, \neq}^{\frac14}
                \mathcal{E}_{2, \neq}^{\frac14}
                \|\nabla\rho\|_{L^{\infty}} \,ds \\
&\leq \|\rho_{0,\neq}\|_{L^{\infty}}
   + C  \|\nabla\rho_0\|_{L^{\infty}\cap \dot{H}^1}
           \|u_0\|_{H^1}^{\frac12}
           \widetilde{\mathcal{E}}_{2,\neq, t}^{\frac14},
\end{split}
\end{equation}
where $\mathcal{E}_{1, \neq}$ and $
                \mathcal{E}_{2, \neq}$ are defined respectively by \eqref{def-H1-u-neq-1} and \eqref {est-L2-oscillation-t-omega-8}.

                \smallskip
\noindent\textbf{$\bullet$ The estimate of $\|\nabla\rho\|_{L^{\infty}_t(L^{\infty})}$.}\smallskip

First, applying $\nabla$ and $\nabla\partial$ to the $\rho$ equation in \eqref{eqns-HINS-1} yields
\begin{equation*}\label{L2-hins-u-6aaa}
\begin{split}
\partial_t(\nabla\rho) + u \cdot \nabla(\nabla\rho) &= -\nabla u \cdot \nabla\rho,
\end{split}
\end{equation*}
and
\begin{equation*}\label{L2-hins-u-6bbb}
\begin{split}
\partial_t(\nabla\partial\rho) + u \cdot \nabla(\nabla\partial\rho)
&= -\partial u \cdot \nabla(\nabla\rho)  - \partial\nabla u \cdot \nabla\rho  - \nabla u \cdot \partial\nabla\rho.
\end{split}
\end{equation*}
The classical theory of transport equations then gives
\begin{equation}\label{L2-hins-u-7}
\begin{split}
\|\nabla\rho\|_{L^{\infty}_t(L^{\infty}\cap \dot{H}^1)}
\leq \|\nabla\rho_0\|_{L^{\infty}\cap \dot{H}^1}
   \, e^{C \int_0^t \|\nabla u(s)\|_{L^{\infty}\cap\dot{H}^1} \,ds}.
\end{split}
\end{equation}

\noindent\textbf{$\bullet$ The estimate of $\|\nabla u\|_{L^{\infty}\cap \dot{H}^1}$.}\smallskip

We now estimate $\|\nabla u\|_{L^{\infty}}$. Note that
\begin{equation}\label{est-Lip-u-1}
\begin{split}
&\|\nabla u\|_{L^{\infty}}
 \lesssim \|\partial_1 u\|_{L^{\infty}} + \|\partial_2 u^1\|_{L^{\infty}} \\
&\lesssim \|\partial_1 u\|_{L^2_y(L^{\infty}_x)}^{\frac12}
          \|\partial_2\partial_1 u\|_{L^2_y(L^{\infty}_x)}^{\frac12}
        + \|\partial_2 u^1\|_{L^2_y(L^{\infty}_x)}^{\frac12}
          \|\partial_2^2 u^1\|_{L^2_y(L^{\infty}_x)}^{\frac12} \\
&\lesssim \|\partial_1^2 u_{\neq}\|_{L^2}^{\frac12}
          \|\partial_1^2 \omega_{\neq}\|_{L^2}^{\frac12} + \bigl(\|\partial_2 u^1\|_{L^2} + \|\partial_1\partial_2 u_{\neq}^1\|_{L^2}\bigr)^{\frac12}
          \bigl(\|\partial_2\omega\|_{L^2} + \|\partial_1\partial_2\omega\|_{L^2}\bigr)^{\frac12},
\end{split}
\end{equation}
which implies
\begin{equation*}\label{est-Lip-u-3}
\begin{split}
\|\nabla u\|_{L^{\infty}}^2
&\lesssim \|\partial_1^2 u_{\neq}\|_{L^2} \|\partial_1^2 \omega_{\neq}\|_{L^2}  + (\|\omega\|_{L^2}
        + \|\partial_1 u_{\neq}^1\|_{L^2}^{\frac12}
          \|\partial_1\partial_2\omega\|_{L^2}^{\frac12})  \bigl(\|\partial_2\omega\|_{L^2} + \|\partial_1\partial_2\omega\|_{L^2}\bigr).
\end{split}
\end{equation*}
Similarly,
\begin{equation*}\label{est-Lip-H2-u-4}
\begin{split}
\|\nabla u\|_{L^{\infty}\cap\dot{H}^1}^2
&\lesssim \|\partial_1^2 u_{\neq}\|_{L^2} \|\partial_1^2 \omega_{\neq}\|_{L^2}  + \|\omega\|_{L^2} \|\partial_2\omega\|_{L^2}
        + \|\omega\|_{L^2} \|\partial_1\partial_2\omega\|_{L^2} \\
&\quad + \|\partial_1 u_{\neq}^1\|_{L^2}^{\frac12}
          \|\partial_1\partial_2\omega\|_{L^2}^{\frac32}+ \|\partial_1 u_{\neq}^1\|_{L^2}^{\frac12}
          \|\partial_1\partial_2\omega\|_{L^2}^{\frac12}
          \|\partial_2\omega\|_{L^2}  .
\end{split}
\end{equation*}
Hence, 
\begin{equation*}\label{est-Lip-u-5}
\begin{split}
\int_0^t \|\nabla u\|_{L^{\infty}\cap\dot{H}^1}^2 \,d\tau
&\lesssim \|\partial_1^2 u_{\neq}\|_{L^{2}_t(L^2)}
          \|\partial_1^2 \omega_{\neq}\|_{L^{2}_t(L^2)}  + \|\omega\|_{L^{\infty}_t(L^2)}
          \|\partial_2\omega\|_{L^{\infty}_t( L^2)} \,t \\
&\quad + \|\omega\|_{L^{\infty}_t(L^2)}
          \|\partial_1\partial_2\omega\|_{L^{2}_t(L^2)} \, t^{\frac12} + \|\partial_1 u_{\neq}^1\|_{L^{2}_t(L^2)}^{\frac12}
          \|\partial_1\partial_2\omega\|_{L^{2}_t(L^2)}^{\frac32}\\
&\quad + \|e^{c_1 \tau} \partial_1 u_{\neq}^1\|_{L^{2}_t(L^2)}^{\frac12}
          \|\partial_1\partial_2\omega\|_{L^{2}_t(L^2)}^{\frac12}
          \|\partial_2\omega\|_{L^{\infty}_t(L^2)}
          \|e^{-c_1 \tau}\|_{L^2([0, t])}.
\end{split}
\end{equation*}
Thus, by virtue of \eqref{est-Hins-expo-1}, \eqref{est-Hins-expo-2} and \eqref{est-partial2-omega-32}, we find
\begin{equation*}\label{est-Lip-u-6}
\begin{split}
\int_0^t &\|\nabla u\|_{L^{\infty}\cap\dot{H}^1}^2 \,d\tau \lesssim \|u_0\|_{H^1}^{\frac12} (1 + \|u_0\|_{H^1}^{\frac12})
          \mathfrak{K}_0^{\frac12} (1 + t)
        + \|u_0\|_{H^1}^{\frac12} \mathfrak{K}_0^{\frac34},
\end{split}
\end{equation*}
and then
\begin{equation}\label{est-Lip-u-7}
\begin{split}
\|\nabla u\|_{L^1([0, t]; L^{\infty}\cap\dot{H}^1)}
&\lesssim \|u_0\|_{H^1}^{\frac14} (1 + \|u_0\|_{H^1}^{\frac14})
          \mathfrak{I}_0^{\frac14} (1 + t)^{\frac12} t^{\frac12}  + \|u_0\|_{H^1}^{\frac14} \mathfrak{K}_0^{\frac38} t^{\frac12} \\
&\lesssim \|u_0\|_{H^1}^{\frac14} (1 + \mathfrak{K}_0^{\frac18})
          \mathfrak{K}_0^{\frac14} (1 + t).
\end{split}
\end{equation}
Using \eqref{L2-hins-u-7}, we obtain
\begin{equation}\label{est-H2-rho-hins-1}
\begin{split}
\|\nabla\rho\|_{L^{\infty}_t(L^{\infty}\cap \dot{H}^1)}
&\leq \|\nabla\rho_0\|_{L^{\infty}\cap \dot{H}^1}
   \, e^{C_{14} \|u_0\|_{H^1}^{\frac14} (1 + \mathfrak{K}_0^{\frac18})
          \mathfrak{K}_0^{\frac14} (1 + t)}.
\end{split}
\end{equation}
From \eqref{est-partial2-omega-T-18aa}, \eqref{est-partial2-omega-32} and \eqref{est-rho-neq-6}, we have
\begin{equation}\label{est-partial2-omega-33}
\begin{split}
\widetilde{\mathcal{E}}_{2,\neq, T}
&\lesssim \mathcal{E}_{2,\neq, 0} + \epsilon_0^2
        + \epsilon_0^2 \mathfrak{K}_0
        + \|u_{0}\|_{H^1} \mathfrak{K}_0^{\frac12},
\end{split}
\end{equation}
and
\begin{equation}\label{rho-occis-bdd-11}
\begin{split}
\|\rho_{\neq}(t)\|_{L^{\infty}}
&\leq \|\rho_{0,\neq}\|_{L^{\infty}}  +  C_{15}\|\nabla\rho_0\|_{L^{\infty}\cap \dot{H}^1} 
            \|u_0\|_{H^1}^{\frac12}
            (\mathcal{E}_{2,\neq, 0} + \epsilon_0^2
             + \epsilon_0^2 \mathfrak{K}_0
             + \|u_{0}\|_{H^1} \mathfrak{K}_0^{\frac12})^{\frac14}.
\end{split}
\end{equation}

\noindent\textbf{$\bullet$ Smallness assumptions on the initial data.}
\smallskip

Under the smallness condition \eqref{assumption-initial-103} for $ \mathfrak{c}$ being  sufficiently small, we find
\begin{equation}\label{initial-smallness-101}
\begin{split}
&\|u_0\|_{H^1}^{\frac14} (1 + \mathfrak{K}_0^{\frac18}) \mathfrak{K}_0^{\frac14}
   \lesssim \mathfrak{c}^{\frac14}, \quad \epsilon_0^2 \mathfrak{K}_0 \lesssim \mathfrak{c}^2, \\
   & \|\rho_{0, \neq}\|_{L^{\infty}}+\epsilon_0 \lesssim \mathfrak{c}, \quad \bigl(1+ \|\nabla\rho_0\|_{L^{\infty}\cap\dot{H}^1}^2\bigr) \epsilon_0^2
   \lesssim \mathfrak{c}^2, \\
   &  \|u_0\|_{H^1} \|\nabla\rho_0\|_{L^{\infty}\cap\dot{H}^1}
   (1 + \|u_0\|_{H^1} \|\nabla\rho_0\|_{L^{\infty}\cap\dot{H}^1})
   \lesssim \mathfrak{c}, \\
&\|\nabla\rho_0\|_{L^{\infty}\cap \dot{H}^1} 
            \|u_0\|_{H^1}^{\frac12}
            (\mathcal{E}_{2,\neq, 0} + \epsilon_0^2
             + \epsilon_0^2 \mathfrak{K}_0
             + \|u_{0}\|_{H^1} \mathfrak{K}_0^{\frac12})^{\frac14}
   \lesssim \mathfrak{c}^{\frac{1}{4}}.
\end{split}
\end{equation}
Then we deduce from \eqref{assumption-initial-103}, \eqref{H1-hins-u-9} and \eqref{rho-occis-bdd-11}-\eqref{initial-smallness-101} that
\begin{equation}\label{u-H1-smallness-122}
\sup_{t\in [0,T_2^\star]}(\|\rho_{\neq}(t)\|_{L^{\infty}}+\|(u, \omega)(t)\|_{L^2})
\leq C_{16}\frak{c}^{\frac{1}{4}} (1 +\frak{c}^{\frac{3}{4}})\leq \f{\delta_0}2,
\end{equation}
if $\frak{c}$ is sufficiently small.

While due to smallness of the positive constant $\frak{c}$, it follows from \eqref{est-H2-rho-hins-1} that
\begin{align*}
\sup_{t\in [0,T_2^\star]}\|\nabla\rho(t)\|_{L^{\infty}\cap \dot{H}^1}
&\leq \|\nabla\rho_0\|_{L^{\infty}\cap \dot{H}^1}
   \, e^{C_{17} {\frak{c}}^{\f14}(1 + t)}\leq \f32\|\nabla\rho_0\|_{L^{\infty}\cap \dot{H}^1}
   \, e^{\frac{c_1}4 t}.
   \end{align*}

Along with \eqref{condition-smallness-sol-1}, \eqref{conditions-sol-3},  
\eqref{small-init-u0-2}, \eqref{est-partial2-omega-24-condition}-\eqref{est-partial2-omega-32}, \eqref{est-partial2-omega-33} and \eqref{u-H1-smallness-122}, we deduce that
 \beno
 T_2^\star=T_1^\star=T^\ast=\infty,
 \eeno
 for $T_1^\star, T_2^\star$ being defined respectively by \eqref{S4eq1} and \eqref{conditions-sol-5}.
 This finishes the proof of the global existence of a solution to  the system \eqref{eqns-HINS-1} satisfying the bounds \eqref{expo-Hins-u-neq-1}--\eqref{bdd-Hins-u-neq-4}. Uniqueness follows directly from the estimates \eqref{est-Lip-u-7},\eqref{est-H2-rho-hins-1} and classical argument for the system  \eqref{eqns-HINS-1}. This completes the proof of Theorem \ref{thm-GWP-HINS}.
\end{proof}

\medskip
\renewcommand{\theequation}{\thesection.\arabic{equation}}
\setcounter{equation}{0}

\appendix
\section{Preliminary}\label{sect2}

For the convenience of readers, we recall the following two technical lemmas  from \cite{HSWZ2024}:

\begin{lem}[Lemma 5.1 in \cite{HSWZ2024}]\label{lem-est-transport-1}
{\sl Fix $t > 0$. Assume that $\bar{u} = \bar{u}(s, x):  ]0, +\infty[ \times \mathbb{R}^d \rightarrow \mathbb{R}^d$ satisfies $s^{1/2} \nabla \bar{u} \in L^2(\mathbb{R}^+; L^{\infty}(\mathbb{R}^d))$. Let $p \in ]1, +\infty[$ and $\varphi(x) \in W^{1, p}(\mathbb{R}^d)$. Then there exists a unique solution $\psi = \psi(s, x):  ]0, t] \times \mathbb{R}^d \rightarrow \mathbb{R}^d$ to the equation
\begin{equation}\label{transport-psi-1}
\begin{cases}
\partial_s \psi + \bar{u} \cdot \nabla \psi = 0, & \, \, (s, x) \in ]0, t] \times \mathbb{R}^d,\\
\psi(t, x) = \varphi(x), & \
\end{cases}
\end{equation}
such that
\begin{equation}\label{est-transport-2}
\|\nabla \psi(s)\|_{L^p(\mathbb{R}^d)} \leq C \|\nabla \varphi\|_{L^p(\mathbb{R}^d)} \, e^{C |\ln(t/s)|^{\frac{1}{2}}} \quad \forall\, s \in ]0, t],
\end{equation}
where $C > 0$ is a constant depending only on $\|s^{1/2} \nabla \bar{u}\|_{L^2(\mathbb{R}^+; L^{\infty}(\mathbb{R}^d))}$.}
\end{lem}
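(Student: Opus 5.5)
Since $\psi$ is transported by $\bar u$ backward in time from $s=t$, I will build it explicitly with the flow of $\bar u$ and read the gradient bound off the Jacobian of that flow. For $0<s\le t$ let $X(t,s,x)$ solve $\partial_t X(t,s,x)=\bar u(t,X(t,s,x))$ with $X(s,s,x)=x$, and set $\psi(s,x)\eqdefa \varphi(X(t,s,x))$. The first step is to show that $\bar u$ is Lipschitz in space on every interval $[s,t]$ with $s>0$. By Cauchy--Schwarz,
\begin{equation*}
\int_s^t\|\nabla\bar u(\tau)\|_{L^\infty}\,d\tau\le \|\tau^{\frac12}\nabla\bar u\|_{L^2(\R^+;L^\infty)}\Bigl(\int_s^t\frac{d\tau}{\tau}\Bigr)^{\frac12}=A\,|\ln(t/s)|^{\frac12},
\end{equation*}
where $A\eqdefa\|\tau^{\frac12}\nabla\bar u\|_{L^2(\R^+;L^\infty)}$. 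Hence the Cauchy--Lipschitz theory on $[s,t]$ gives a unique flow of bi-Lipschitz homeomorphisms, and $\psi$ solves \eqref{transport-psi-1} in the weak (renormalized) sense. Uniqueness then follows from the standard DiPerna--Lions argument on $[s_0,t]$ for each $s_0>0$, since the field is Lipschitz there.

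The second step is the gradient bound. Differentiating the flow equation in $x$ gives $\frac{d}{d\tau}\nabla_x X=\nabla\bar u(\tau,X)\,\nabla_x X$ with identity data at $\tau=s$. Gronwall and the first step yield
\begin{equation*}
\|\nabla_x X(t,s,\cdot)\|_{L^\infty}\le \exp\Bigl(\int_s^t\|\nabla\bar u\|_{L^\infty}d\tau\Bigr)\le e^{A|\ln(t/s)|^{\frac12}}.
\end{equation*}
Since $\nabla\psi(s,x)=(\nabla_x X(t,s,x))^{T}\nabla\varphi(X(t,s,x))$, we get
\begin{equation*}
\|\nabla\psi(s)\|_{L^p}\le e^{A|\ln(t/s)|^{\frac12}}\Bigl(\int|\nabla\varphi(X(t,s,x))|^p\,dx\Bigr)^{\frac1p}.
\end{equation*}

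The third step is the change of variables $y=X(t,s,x)$. Its Jacobian satisfies $\det\nabla_x X=\exp\int_s^t \dv\bar u$, so its inverse is bounded by $\exp\bigl(\sqrt d\int_s^t\|\nabla\bar u\|_{L^\infty}\bigr)$ in general, and equals $1$ when $\dv\bar u=0$, which is the case in our application. This gives $\|\nabla\psi(s)\|_{L^p}\le \|\nabla\varphi\|_{L^p}\,e^{C|\ln(t/s)|^{\frac12}}$ with $C=(1+\sqrt d/p)A$. The constant depends only on $A$ and $d$, and $p$ enters only through $1/p\le1$.

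The main obstacle is justifying the construction with only the weighted bound $s^{1/2}\nabla\bar u\in L^2L^\infty$, since $\bar u$ may fail to be Lipschitz near $\tau=0$. This is harmless because only $\tau\in[s,t]$ with $s>0$ is used. To make the differentiation of $\psi$ rigorous, I would mollify $\bar u$ and $\varphi$, obtain the bound uniformly in the mollification parameter, and pass to the limit using stability of the flows in $L^\infty_{\rm loc}$ on $[s,t]$ together with weak compactness in $W^{1,p}_{\rm loc}$.
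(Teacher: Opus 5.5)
The paper gives no proof of this lemma; it is quoted from \cite{HSWZ2024}, so the only comparison is with the statement itself. Your argument is correct and is the standard one.

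The only real input is the Cauchy--Schwarz bound $\int_s^t\|\nabla\bar u(\tau)\|_{L^\infty}\,d\tau\le A\,|\ln(t/s)|^{\frac12}$. After that, Gronwall for the flow Jacobian and the change of variables $y=X(t,s,x)$ finish the proof. An Eulerian $L^p$ estimate on $\nabla\psi$ gives the same factor $\exp\bigl((1+\frac{\sqrt d}{p})\int_s^t\|\nabla\bar u\|_{L^\infty}\,d\tau\bigr)$ without constructing the flow: differentiate the equation, test with $|\nabla\psi|^{p-2}\nabla\psi$, and integrate backward from $s=t$.

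One point should be added. Cauchy--Lipschitz on $[s,t]$ needs some control of $\bar u$ itself, not only of $\nabla\bar u$, for instance $\bar u\in L^1(s,t;L^\infty)$. The statement leaves this implicit. It does hold in the application: there $\bar u\in L^\infty(\R^+;L^2)$, and a Lipschitz $L^2$ function on $\R^2$ satisfies $\|\bar u\|_{L^\infty}\lesssim\|\bar u\|_{L^2}^{\frac12}\|\nabla\bar u\|_{L^\infty}^{\frac12}$, which is integrable on $[s,t]$.
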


\begin{lem}[Lemma 5.2 in \cite{HSWZ2024}]\label{lem-est-transport-2}
{\sl Let $T > 0$ and $f:  ]0, T] \rightarrow [0, +\infty]$ be such that $f \in L^4(0, T)$. Let $A > 0$ and define
\begin{equation}\label{def-F-1}
F(t) \eqdefa \frac{1}{t} \int_0^t f(s) \, e^{A |\ln(t/s)|^{\frac{1}{2}}} \, ds \quad \mbox{for}\, t \in ]0, T].
\end{equation}
Then $F \in L^4(0, T)$ and
\begin{equation}\label{est-F-2}
\|F\|_{L^4(0, T)} \leq C_A \|f\|_{L^4(0, T)},
\end{equation}
where $C_A > 0$ is a constant depending only on $A$.}
\end{lem}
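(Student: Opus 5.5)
The statement to prove is Lemma \ref{lem-est-transport-2}, a weighted Hardy-type inequality.

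\emph{Reduction to a multiplicative convolution.} Substitute $s = t\sigma$ with $\sigma\in\,]0,1]$. This gives
\[
F(t)=\int_0^1 f(t\sigma)\,e^{A|\ln\sigma|^{1/2}}\,d\sigma ,
\]
where $f$ is extended by zero outside $]0,T]$. So $F$ is an average of dilates of $f$, weighted by $k(\sigma)\eqdefa e^{A|\ln\sigma|^{1/2}}$.

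\emph{Minkowski's inequality.} Apply Minkowski's integral inequality in $L^4(0,T)$. For each fixed $\sigma$, the change of variables $\tau=t\sigma$ gives
\[
\|f(\cdot\,\sigma)\|_{L^4(0,T)}\le \sigma^{-1/4}\|f\|_{L^4(0,T)} .
\]
Hence
\[
\|F\|_{L^4(0,T)}\le \|f\|_{L^4(0,T)}\int_0^1 \sigma^{-1/4}e^{A|\ln\sigma|^{1/2}}\,d\sigma .
\]

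\emph{Finiteness of the constant.} Set $\sigma=e^{-r}$ with $r\ge0$. The integral becomes
\[
\int_0^\infty e^{-\frac34 r + A r^{1/2}}\,dr .
\]
Since $Ar^{1/2}\le \frac14 r + A^2$, the integrand is at most $e^{A^2}e^{-r/2}$. The integral is therefore bounded by $2e^{A^2}\eqdefa C_A$, which depends only on $A$.

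Measurability of $F$ is clear from Tonelli, since all integrands are nonnegative, and the bound shows $F\in L^4(0,T)$. There is no real obstacle here. The only point to check is that the subexponential growth $e^{A|\ln\sigma|^{1/2}}$ of the weight is beaten by the polynomial gain $\sigma^{3/4}$, in the form $\sigma^{-1/4}=\sigma^{3/4}\sigma^{-1}$, and the last step shows this holds for every $A$.
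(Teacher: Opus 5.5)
The paper gives no proof of this lemma. It is recalled from \cite{HSWZ2024} and used as a black box in \eqref{est-I1-5aaa}, so there is no in-text argument to compare against, and your proposal supplies the missing proof.

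Your proof is correct and complete:
\begin{itemize}
\item The substitution $s=t\sigma$ is exact. No extension of $f$ is even needed, since $t\sigma\in\,]0,T]$.
\item Minkowski's integral inequality is legitimate because all integrands are nonnegative.
\item The dilation bound follows from $\int_0^T f(t\sigma)^4\,dt=\sigma^{-1}\int_0^{T\sigma}f^4\,d\tau$.
\item The inequality $Ar^{1/2}\le \frac r4+A^2$ is just $2ab\le a^2+b^2$ with $a=A$ and $b=\frac12 r^{1/2}$.
\end{itemize}

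Your route yields an explicit constant $C_A\le 2e^{A^2}$, visibly independent of $T$. The argument also works verbatim in $L^p$ for every $p\in\,]1,\infty]$, since the only thing used is $1-\frac1p>0$.

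An equivalent, standard way to see the same estimate is to set $t=e^x$, $s=e^y$ and $g(y)=e^{y/4}f(e^y)$. Then $\|g\|_{L^4(\R)}=\|f\|_{L^4(0,T)}$, and $e^{x/4}F(e^x)=(K*g)(x)$ with $K(r)=\mathbf{1}_{r>0}\,e^{-\frac34 r+Ar^{1/2}}$. Young's inequality gives the claim with constant $\|K\|_{L^1(\R)}=\int_0^\infty e^{-\frac34 r+Ar^{1/2}}\,dr$, which is exactly your integral. Your Minkowski step is the same convolution inequality, written on the multiplicative group with the dilation gain $\sigma^{-1/4}$ playing the role of the weight $e^{x/4}$.
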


\noindent {\bf Acknowledgments.}
 G. Gui is supported in part by National Natural Science Foundation of China under Grants 12371211 and 12126359.
  P. Zhang is partially  supported by National Key R$\&$D Program of China under grant 2021YFA1000800 and by National Natural Science Foundation of China under Grants  No. 12421001, No. 12494542 and No. 12288201.
  
  \vskip 0.2cm

\noindent{\bf Data availability statement.}  Data sharing is not applicable to this article as no datasets were generated or analysed
during the current study.

\noindent{\bf Conflict of interest.} On behalf of all authors, the corresponding author states that there is no conflict of
interest. All authors read and approved the final manuscript.

\end{document}